\title{Generalised golden ratios over integer alphabets}
\author{Simon Baker}
\newtheorem{thm}{Theorem}[section]
\newtheorem{cor}[thm]{Corollary}
\newtheorem{lem}[thm]{Lemma}
\newtheorem{prop}[thm]{Proposition}
\newtheorem{remark}[thm]{Remark}
\begin{document}
\maketitle

\begin{abstract}
It is a well known result that for $\beta\in(1,\frac{1+\sqrt{5}}{2})$ and $x\in(0,\frac{1}{\beta-1})$ there exists uncountably many $(\epsilon_{i})_{i=1}^{\infty}\in \{0,1\}^{\mathbb{N}}$ such that $x=\sum_{i=1}^{\infty}\epsilon_{i}\beta^{-i}.$ When $\beta\in(\frac{1+\sqrt{5}}{2},2]$ there exists $x\in (0,\frac{1}{\beta-1})$ for which there exists a unique $(\epsilon_{i})_{i=1}^{\infty}\in \{0,1\}^{\mathbb{N}}$ such that $x=\sum_{i=1}^{\infty}\epsilon_{i}\beta^{-i}.$ In this paper we consider the more general case when our sequences are elements of $\{0,\ldots,m\}^{\mathbb{N}}.$ We show that an analogue of the golden ratio exists and give an explicit formula for it.
\end{abstract}

\section{Introduction}
\let\thefootnote\relax\footnote{AMS Classification: 37A45, 37C45} \let\thefootnote\relax\footnote{Keywords: Beta-expansions, Dimension theory} 
Let $m\in\mathbb{N},$ $\beta\in(1,m+1]$ and $I_{\beta,m}=[0,\frac{m}{\beta-1}]$. Each $x\in I_{\beta,m}$ has an expansion of the form $$x=\sum_{i=1}^{\infty}\frac{\epsilon_{i}}{\beta^{i}},$$ for some $(\epsilon_{i})_{i=1}^{\infty}\in \{0,\ldots,m\}^{\mathbb{N}}.$ We call such a sequence a \textit{$\beta$-expansion} for $x$. For $x\in I_{\beta,m}$ we denote the set of $\beta$-expansions for $x$ by $\Sigma_{\beta,m}(x)$, i.e., $$\Sigma_{\beta,m}(x)=\Big\{(\epsilon_{i})_{i=1}^{\infty}\in \{0,\ldots,m\}^{\mathbb{N}} : \sum_{i=1}^{\infty}\frac{\epsilon_{i}}{\beta^{i}}=x\Big\}.$$ In \cite{Erdos} the authors consider the case when $m=1$, they show that for $\beta\in(1,\frac{1+\sqrt{5}}{2})$ the set $\Sigma_{\beta,1}(x)$ is uncountable for every $x\in (0,\frac{1}{\beta-1})$. The endpoints of $[0,\frac{1}{\beta-1}]$ trivially have a unique $\beta$-expansion. In \cite{DaKa} it is shown that for $\beta\in(\frac{1+\sqrt{5}}{2},2]$ there exists $x\in (0,\frac{1}{\beta-1})$ with a unique $\beta$-expansion.

For $m\in\mathbb{N}$ we define $\mathcal{G}(m)\in\mathbb{R}$ to be a \textit{generalised golden ratio for $m$} if for $\beta\in(1,\mathcal{G}(m))$ the set $\Sigma_{\beta,m}(x)$ is uncountable for every $x\in(0,\frac{m}{\beta-1})$, and for $\beta\in(\mathcal{G}(m),m+1]$ there exists $x\in(0,\frac{m}{\beta-1})$ for which $\left|\Sigma_{\beta,m}(x)\right|=1.$

In \cite{KomLaiPed} the authors consider a similar setup. They consider the case where $\beta$-expansions are elements of $\{a_{1},a_{2},a_{3}\}^{\mathbb{N}},$ for some $a_{1},a_{2},a_{3}\in\mathbb{R}$. They show that for each ternary alphabet there exists a constant $G\in\mathbb{R}$ such that, there exists nontrivial unique $\beta$-expansions if and only if $\beta>G.$ Moreover they give an explicit formula for $G$. 

Our main result is the following.

\begin{thm}
\label{Main thm}
For each $m\in\mathbb{N}$ a generalised golden ratio exists and is equal to:
\begin{equation}
\label{Golden ratio}
\mathcal{G}(m) = \left\{ \begin{array}{rl}
k+1 &\mbox{ if $m=2k$} \\
\frac{k+1+\sqrt{k^{2}+6k+5}}{2}   &\mbox{ if $m=2k+1$.}
       \end{array} \right.
\end{equation}
\end{thm}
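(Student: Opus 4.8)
The plan is to reformulate everything via the inverse branch maps $f_j(y)=(y+j)/\beta$ on $I_{\beta,m}$ ($j=0,\dots,m$); since $\beta\le m+1$ we have $I_{\beta,m}=\bigcup_j f_j(I_{\beta,m})$, and $j$ is an admissible first digit for $x$ iff $x\in f_j(I_{\beta,m})$. Set $S_j=f_j(I_{\beta,m})\cap f_{j+1}(I_{\beta,m})$ (the $j$-th \emph{switch region}) and $S=\bigcup_{j=0}^{m-1}S_j$. If $x\notin S$ the first digit is forced and the tail is $Tx:=\beta x-(\text{that digit})$, so $x$ has a unique $\beta$-expansion iff the forced orbit $x,Tx,T^2x,\dots$ never enters $S$; the theorem then becomes a question about this repeller. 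Explicitly $f_j(I_{\beta,m})=[\frac j\beta,\frac j\beta+\frac m{\beta(\beta-1)}]$ and $S_j=[\frac{j+1}\beta,\frac j\beta+\frac m{\beta(\beta-1)}]$, so consecutive switch regions overlap exactly when $\beta<\frac{m+2}2$; the reflection $x\mapsto\frac m{\beta-1}-x$ (swapping digits $j\leftrightarrow m-j$) preserves the whole picture and will halve the case work.

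For $\beta>\mathcal G(m)$ I would write down an explicit point with a unique expansion. If $m=2k$ then $\mathcal G(m)=k+1=\frac{m+2}2$, so the middle component $G_k$ of $I_{\beta,m}\setminus S$ is a nonempty interval contained in $f_k(I_{\beta,m})$ alone and symmetric about the fixed point $\frac k{\beta-1}$ of $y\mapsto\beta y-k$; hence $\frac k{\beta-1}\in(0,\frac m{\beta-1})$ has the single expansion $kkk\cdots$. If $m=2k+1$, a short computation shows $\mathcal G(m)$ is the positive root of $\beta^2=(k+1)(\beta+1)$; I would take $x_0=\frac{k(\beta+1)+1}{\beta^2-1}$, for which $\{x_0,\frac m{\beta-1}-x_0\}$ is a period-two orbit of $T$ using the digits $k$ and $k+1$. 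Clearing denominators shows that $x_0<\frac{k+1}\beta$ (the right end of $G_k$) is equivalent to $\beta^2>(k+1)(\beta+1)$, and the remaining inequality $x_0>$(left end of $G_k$) follows from it; so for $\beta>\mathcal G(m)$ the orbit stays in $G_k\cup G_{k+1}$ and $x_0\in(0,\frac m{\beta-1})$ has the single expansion $k(k+1)\,k(k+1)\cdots$.

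For $\beta<\mathcal G(m)$ I would first show the repeller contains no interior point. When $\beta\le\frac{m+2}2$ (this is the whole range when $m$ is even), $S$ is a single interval and $I_{\beta,m}\setminus S=L\cup R$ with $L=[0,\frac1\beta)$ and $R$ its reflection; on $L$ the forced map is $y\mapsto\beta y$, and crucially $T(L)=[0,1)$ is disjoint from $R$ — precisely the inequality $\frac m{\beta-1}-\frac1\beta\ge1$, valid because $\frac{m+2}2\le\frac{m+\sqrt{m^2+4}}2$ — so an $S$-avoiding orbit starting in $L$ stays in $L$, forcing it (by expansiveness) to be the fixed point $0$; symmetrically for $R$. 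When $m=2k+1$ and $\frac{m+2}2<\beta<\mathcal G(m)$ the set $I_{\beta,m}\setminus S$ also has middle components $G_1,\dots,G_{m-1}$, each of which $T$ maps \emph{onto the same} interval $V=(\frac m{\beta-1}-1,1)$. The crux is that $\beta<\mathcal G(m)$ is exactly equivalent to $V\subseteq\mathrm{int}\,S_k$: both $V$ and $S_k$ are symmetric about the centre, and comparing their lengths gives $2\beta^2<(m+1)(\beta+1)$, i.e.\ $\beta<\mathcal G(m)$. Hence an $S$-avoiding orbit can never enter a gap, so again it lies in $L\cup R$ and we finish as before (using once more that $T(L)=[0,1)$ misses $R$, which holds since $\mathcal G(2k+1)<\frac{m+\sqrt{m^2+4}}2$ for $k\ge1$).

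It remains to turn "the forced orbit of every interior point meets $S$" into "uncountably many expansions"; here I would note that the orbit in fact reaches $\mathrm{int}\,S$: if it first meets $S$ at an endpoint of some $S_j$, the unique non-trivial digit choice there sends it to $1$ or to $\frac m{\beta-1}-1$, and for $\beta<\mathcal G(m)$ these points already lie in $\mathrm{int}\,S$ or, in the gap case, map into $\mathrm{int}\,S$ after one more forced step. Once the orbit sits in $\mathrm{int}\,S_j$, both choices of the next digit keep it inside $(0,\frac m{\beta-1})$, so the construction iterates to produce a full binary tree of distinct expansions, exactly as in \cite{Erdos}; thus $\Sigma_{\beta,m}(x)$ is uncountable. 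The principal obstacle is the odd case in the window $\frac{m+2}2<\beta<\mathcal G(m)$, where the gaps genuinely matter and the argument rests entirely on the length identity $V\subseteq S_k\iff\beta\le\mathcal G(2k+1)$; the boundary bookkeeping needed to get "uncountable" rather than merely "non-unique" is the other delicate point.
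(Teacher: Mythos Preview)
Your proposal is correct and follows essentially the same architecture as the paper: the same reformulation via the maps $T_{\beta,i}$ (you phrase it via the inverse branches, which is equivalent), the same explicit points with unique expansion ($\frac{k}{\beta-1}$ when $m=2k$, and the period-two pair $\frac{k\beta+k+1}{\beta^2-1}$, $\frac{(k+1)\beta+k}{\beta^2-1}$ when $m=2k+1$), and the same reduction of the ``uncountably many'' claim to ``every interior point eventually reaches the interior of a choice interval'' (the paper's Proposition~2.6). Your inequality checks for the uniqueness points match the paper's Proposition~3.7 almost line for line; your remark that $x_0<\frac{k+1}{\beta}$ implies $x_0>$ (left end of $G_k$) is a small simplification over the paper, which verifies the second inequality separately via $\frac{k+\sqrt{k^2+4k+8}}{2}<\mathcal G(m)$.

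The one place where your route is genuinely tidier is the odd case in the window $\frac{m+2}{2}\le\beta<\mathcal G(m)$. The paper argues iteratively: on the $i$-th fixed digit interval the map $T_{\beta,i}$ expands away from its fixed point (Corollary~3.4), while Lemma~3.5 bounds the image, so ``repeatedly applying'' these facts the orbit is eventually pushed into a choice interval. You instead compute once and for all that $T$ sends \emph{every} gap $G_i$ onto the single symmetric interval $V=(\frac{m}{\beta-1}-1,1)$, and then observe that $V\subset\mathrm{int}\,S_k$ is equivalent, via a length comparison, to $\beta^2<(k+1)(\beta+1)$, i.e.\ $\beta<\mathcal G(m)$. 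This is the same inequality that underlies the paper's Lemma~3.5, but packaged so that a single forced step from any gap already lands in the central choice interval; no iteration is needed. Conversely, the paper's phrasing makes the monotone ``drift toward the centre'' picture more visible, which it later exploits for the quantitative dimension bounds in Section~5. The boundary bookkeeping you flag as delicate is exactly the content of the paper's Proposition~2.6, and your sketch (the non-trivial choice at an endpoint of $S_j$ sends the orbit to $1$ or $\frac m{\beta-1}-1$, which lies in $\mathrm{int}\,S_k$) is correct.
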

\begin{remark}
$\mathcal{G}(m)$ is a Pisot number for all $m\in\mathbb{N}$. Recall a Pisot number is a real algebraic integer greater than $1$ whose Galois conjugates are of modulus strictly less than $1$.
\end{remark}In section 6 we include a table of values for $\mathcal{G}(m).$ We prove Theorem \ref{Main thm} in section $3$. In section $4$ we consider the set of points with unique $\beta$-expansion for $\beta\in(\mathcal{G}(m),m+1],$ and in section $5$ we study the growth rate and dimension theory of the set of $\beta$-expansions for $\beta\in(1,\mathcal{G}(m)).$

\section{Preliminaries}
Before proving Theorem \ref{Main thm} we require the following preliminary results and theory. Let $m\in\mathbb{N}$ be fixed and $\beta\in(1,m+1]$. For $i\in\{0,\ldots,m\}$ we fix $T_{\beta,i}(x)=\beta x -i.$ The proof of the following lemma is trivial and therefore omitted.
\begin{lem}
\label{Map properties}
The map $T_{\beta,i}$ satisfies the following:
\begin{itemize}
  \item $T_{\beta,i}$ has a unique fixed point equal to $\frac{i}{\beta-1}.$
	\item $T_{\beta,i}(x)> x$ for all $x>\frac{i}{\beta-1},$
	\item $T_{\beta,i}(x)< x$ for all $x<\frac{i}{\beta-1},$
	\item $|T_{\beta,i}(x)-T_{\beta,i}(\frac{i}{\beta-1})|=\beta|x-\frac{i}{\beta-1}|$,  for all $x\in\mathbb{R},$ that is $T_{\beta,i}$ scales the distance between the fixed point $\frac{i}{\beta-1}$ and an arbitrary point by a factor $\beta$. 
\end{itemize}
\end{lem}Understanding where in $I_{\beta,m}$ these fixed points are will be important in our later analysis. 

We let
\begin{align*}
\Omega_{\beta,m}(x)=\Big\{&(a_{i})_{i=1}^{\infty}\in \{T_{\beta,0}\ldots T_{\beta,m}\}^{\mathbb{N}}:(a_{n}\circ a_{n-1}\circ \ldots \circ a_{1})(x)\in I_{\beta,m}\\
& \textrm{ for all } n\in\mathbb{N}\Big\}.
\end{align*} Similarly we define $$\Omega_{\beta,m,n}(x)=\Big\{(a_{i})_{i=1}^{n}\in \{T_{\beta,0}\ldots T_{\beta,m}\}^{n}:(a_{n}\circ a_{n-1}\circ \ldots \circ a_{1})(x)\in I_{\beta,m} \Big\}.$$
Typically we will denote an element of $\Omega_{\beta,m,n}(x)$ or any finite sequence of maps by $a$. When we want to emphasise the length of $a$ we will use the notation $a^{(n)}$. We also adopt the notation $a^{(n)}(x)$ to mean $(a_{n}\circ a_{n-1}\circ \ldots \circ a_{1})(x).$
\begin{remark}
It is important to note that if for some finite sequence of maps $a,$ $a(x)\notin I_{\beta,m}$ then we cannot concatenate $a$ by any finite sequence of maps $b,$ such that $b(a(x))\in I_{\beta,m}.$
\end{remark}
\begin{remark}
Let $\beta\in(1,m+1],$ for any $x\in I_{\beta,m}$ there always exists $i\in\{0,\ldots,m\}$ such that $T_{\beta,i}(x)\in I_{\beta,m}.$ For $\beta>m+1$ such an $i$ does not always exist.
\end{remark}
\begin{lem}
\label{Bijection lemma}
$\left|\Sigma_{\beta,m}(x)\right|=\left|\Omega_{\beta,m}(x)\right|.$
\end{lem}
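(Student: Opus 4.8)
The plan is to exhibit an explicit bijection coming from the obvious correspondence $i\leftrightarrow T_{\beta,i}$. Define $\Phi\colon\{0,\ldots,m\}^{\mathbb N}\to\{T_{\beta,0},\ldots,T_{\beta,m}\}^{\mathbb N}$ by $\Phi\big((\epsilon_i)_{i=1}^{\infty}\big)=(T_{\beta,\epsilon_i})_{i=1}^{\infty}$. Since the maps $T_{\beta,0},\ldots,T_{\beta,m}$ are pairwise distinct, $i\mapsto T_{\beta,i}$ is a bijection of $\{0,\ldots,m\}$ onto $\{T_{\beta,0},\ldots,T_{\beta,m}\}$, and hence $\Phi$ is a bijection of the full shift spaces. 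It therefore suffices to show that $\Phi$ maps $\Sigma_{\beta,m}(x)$ onto $\Omega_{\beta,m}(x)$; equivalently, that $(\epsilon_i)_{i=1}^{\infty}\in\Sigma_{\beta,m}(x)$ if and only if $\Phi\big((\epsilon_i)_{i=1}^{\infty}\big)\in\Omega_{\beta,m}(x)$.

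The key computation, proved by a one-line induction on $n$ using $T_{\beta,i}(y)=\beta y-i$, is that for any $(\epsilon_i)$ and any $n\in\mathbb N$,
$$(T_{\beta,\epsilon_n}\circ\cdots\circ T_{\beta,\epsilon_1})(x)=\beta^{n}x-\sum_{i=1}^{n}\epsilon_i\beta^{\,n-i}=\beta^{n}\Big(x-\sum_{i=1}^{n}\frac{\epsilon_i}{\beta^{i}}\Big).$$
Writing $a^{(n)}$ for the composition on the left and $S_n=\sum_{i=1}^{n}\epsilon_i\beta^{-i}$ for the $n$-th partial sum, this reads $a^{(n)}(x)=\beta^{n}(x-S_n)$. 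Since $I_{\beta,m}=[0,\frac{m}{\beta-1}]$, we get that $a^{(n)}(x)\in I_{\beta,m}$ precisely when $0\le x-S_n\le\frac{m}{(\beta-1)\beta^{n}}$.

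For the forward implication, if $(\epsilon_i)\in\Sigma_{\beta,m}(x)$ then $x-S_n=\sum_{i=n+1}^{\infty}\epsilon_i\beta^{-i}$, so $a^{(n)}(x)=\beta^{n}(x-S_n)=\sum_{j=1}^{\infty}\epsilon_{n+j}\beta^{-j}\in[0,\frac{m}{\beta-1}]=I_{\beta,m}$ for every $n$; hence $\Phi\big((\epsilon_i)\big)\in\Omega_{\beta,m}(x)$. For the converse, if $\Phi\big((\epsilon_i)\big)\in\Omega_{\beta,m}(x)$ then the bound above gives $0\le x-S_n\le\frac{m}{(\beta-1)\beta^{n}}$ for all $n$, and since $\beta>1$ the right-hand side tends to $0$; by the squeeze theorem $S_n\to x$, i.e. $x=\sum_{i=1}^{\infty}\epsilon_i\beta^{-i}$, so $(\epsilon_i)\in\Sigma_{\beta,m}(x)$. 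Combining the two implications, $\Phi$ restricts to a bijection from $\Sigma_{\beta,m}(x)$ onto $\Omega_{\beta,m}(x)$, which yields the claimed equality of cardinalities.

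There is no genuine obstacle here: the argument reduces entirely to the elementary identity for $a^{(n)}(x)$ together with the observation that membership in the bounded interval $I_{\beta,m}$ is equivalent to the tail estimate $0\le x-S_n\le\frac{m}{(\beta-1)\beta^{n}}$. The only step that uses more than algebra is the converse direction, where one passes from ``all iterates of $x$ remain in $I_{\beta,m}$'' to ``the series converges to $x$'' via the squeeze theorem, using only that $\beta>1$.
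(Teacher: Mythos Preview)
Your proof is correct and follows essentially the same approach as the paper: both hinge on the identity $(T_{\beta,\epsilon_n}\circ\cdots\circ T_{\beta,\epsilon_1})(x)=\beta^{n}\big(x-\sum_{i=1}^{n}\epsilon_i\beta^{-i}\big)$ and the equivalence between membership in $I_{\beta,m}$ and the tail bound $0\le x-S_n\le \frac{m}{\beta^{n}(\beta-1)}$. The paper states this equivalence as a ``simple exercise'' and then chains the equalities; you have simply written out that exercise explicitly, giving both directions (including the squeeze argument for the converse) and naming the bijection $\Phi$ overtly.
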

\begin{proof}
It is a simple exercise to show that $$\Sigma_{\beta,m}(x)=\Big\{(\epsilon_{i})_{i=1}^{\infty}\in \{0,\ldots,m\}^{\mathbb{N}}: x-\sum_{i=1}^{n}\frac{\epsilon_{i}}{\beta^{i}}\in\Big[0,\frac{m}{\beta^{n}(\beta-1)}\Big] \textrm{ for all } n\in\mathbb{N}\Big\}.$$ Following \cite{FengSid} we observe that
\begin{align*}
\Sigma_{\beta,m}(x)&=\Big\{(\epsilon_{i})_{i=1}^{\infty}\in \{0,\ldots,m\}^{\mathbb{N}}: x-\sum_{i=1}^{n}\frac{\epsilon_{i}}{\beta^{i}}\in\Big[0,\frac{m}{\beta^{n}(\beta-1)}\Big] \textrm{ for all } n\in\mathbb{N}\Big\}\\
&=\Big\{(\epsilon_{i})_{i=1}^{\infty}\in \{0,\ldots,m\}^{\mathbb{N}}: \beta^{n}x-\sum_{i=1}^{n}\epsilon_{i}\beta^{n-i}\in I_{\beta,m} \textrm{ for all } n\in\mathbb{N}\Big\}\\
&=\Big\{(\epsilon_{i})_{i=1}^{\infty}\in \{0,\ldots,m\}^{\mathbb{N}}:(T_{\beta,\epsilon_{n}}\circ \ldots \circ T_{\beta,\epsilon_{1}})(x)\in I_{\beta,m}\textrm{ for all } n\in\mathbb{N}\Big\}.
\end{align*}Our result follows immediately.
\end{proof}By Lemma \ref{Bijection lemma} we can rephrase the definition of a generalised golden ratio in terms of the set $\Omega_{\beta,m}(x).$ This equivalent definition will be more suitable for our purposes. The set $\Omega_{\beta,m,n}(x)$ will be useful when we study the growth rate and dimension theory of the set of $\beta$-expansions.

For a point $x\in I_{\beta,m}$ we can take $i$ to be the first digit in a $\beta$-expansion for $x$ if and only if $\beta x-i\in I_{\beta,m}.$ This is equivalent to 
$$x\in\Big[\frac{i}{\beta},\frac{i\beta +m-i}{\beta(\beta-1)}\Big],$$ as such we refer to the interval $[\frac{i}{\beta},\frac{i\beta +m-i}{\beta(\beta-1)}]$ as the \textit{$i$-th digit interval}. Generally speaking we can take $i$ to be the $j$-th digit in a $\beta$-expansion for $x$ if and only if there exists $a\in\Omega_{\beta,m,j-1}(x)$ such that, $a(x)\in [\frac{i}{\beta},\frac{i\beta +m-i}{\beta(\beta-1)}].$ When $x$ or an image of $x$ is contained in the intersection of two digit intervals we have a choice of digit in our $\beta$-expansion for $x$. Generally speaking any two digit intervals may intersect for $\beta$ sufficiently small, however for our purposes we need only consider the case when the $i$-th digit interval intersects the adjacent $(i-1)$-th or $(i+1)$-th digit intervals, for some $i\in\{0,\ldots,m\}$. Any intersection of this type is of the form 
$$\Big[\frac{i}{\beta},\frac{(i-1)\beta+m-(i-1)}{\beta(\beta-1)}\Big],$$for some $i\in\{1,\ldots,m\}.$ In what follows we refer to the interval $[\frac{i}{\beta},\frac{(i-1)\beta+m-(i-1)}{\beta(\beta-1)}]$ as the \textit{$i$-th choice interval}. Both $T_{\beta,i-1}$ and $T_{\beta,i}$ map the $i$-th choice interval into $I_{\beta,m}$. These intervals always exist and are nontrivial for $\beta\in(1,m+1).$ 

\begin{prop}
\label{Uncountable proposition}
Suppose for any $x\in(0,\frac{m}{\beta-1})$ there always exists a finite sequence of maps that map $x$ into the interior of a choice interval, then $\Omega_{\beta,m}(x)$ is uncountable.
\end{prop}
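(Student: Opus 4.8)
The plan is to build a binary subtree of valid map-sequences inside $\Omega_{\beta,m}(x)$, thereby embedding $\{0,1\}^{\mathbb N}$ and concluding uncountability. The starting observation is that the hypothesis applies not just to $x$ but to every point of $(0,\frac m{\beta-1})$, and in particular to every image of $x$ under a finite sequence of maps — provided that image lies in $(0,\frac m{\beta-1})$. So first I would check that we never get stuck at an endpoint: if a finite sequence of maps sends $x$ to $0$ or to $\frac m{\beta-1}$, then by Lemma~\ref{Map properties} these are the fixed points of $T_{\beta,0}$ and $T_{\beta,m}$ respectively, and the constant continuation keeps us in $I_{\beta,m}$ forever, so such a sequence already extends to an element of $\Omega_{\beta,m}(x)$; moreover that does not obstruct the construction since we only need to branch from points that genuinely lie in the open interval. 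For the generic case, starting from $x_0=x\in(0,\frac m{\beta-1})$, the hypothesis gives a finite sequence $c$ with $c(x_0)$ in the interior of some $i$-th choice interval. Both $T_{\beta,i-1}$ and $T_{\beta,i}$ map that choice interval into $I_{\beta,m}$, and because $c(x_0)$ is in the \emph{interior}, the two images $T_{\beta,i-1}(c(x_0))$ and $T_{\beta,i}(c(x_0))$ are distinct (they differ by $1$) — this is the branching step, giving two distinct legal finite prefixes extending $c$.

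The key point to nail down is that after branching we can always branch again, i.e.\ that each of the two new endpoints is again a point from which the hypothesis can be reapplied. The subtlety is that $T_{\beta,i-1}(c(x_0))$ or $T_{\beta,i}(c(x_0))$ could land exactly on an endpoint $0$ or $\frac m{\beta-1}$ of $I_{\beta,m}$, where we can no longer invoke the hypothesis. I would handle this by the dichotomy noted above: if an image hits an endpoint, we freeze that branch using the fixed-point continuation ($T_{\beta,0}^{\infty}$ at $0$, $T_{\beta,m}^{\infty}$ at $\frac m{\beta-1}$), which is a valid element of $\Omega$; if the image lies in the open interval, we recurse. To keep the tree genuinely binary — so that it has uncountably many branches — one must ensure that at each node at least the branching happens before (or at worst simultaneously with) any freezing, and that the two subtrees produced at a branch point stay disjoint as subsets of $\{T_{\beta,0},\dots,T_{\beta,m}\}^{\mathbb N}$. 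Disjointness is automatic because at the branching level the two finite prefixes already differ in one coordinate, so every infinite extension of one differs from every extension of the other.

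Concretely I would set up an induction indexed by finite binary words $w\in\{0,1\}^{*}$: to each $w$ we assign a finite sequence of maps $a_w\in\Omega_{\beta,m,n(w)}(x)$ with $n(w)$ strictly increasing along the tree, such that $a_{w0}$ and $a_{w1}$ both extend $a_w$, agree for the first $n(w)+\ell$ steps (the length of the choice-interval–reaching sequence) and then differ in the very next map, chosen from $\{T_{\beta,i-1},T_{\beta,i}\}$ for the relevant $i$. The construction at stage $w$ uses the hypothesis applied to the point $a_w(x)$ — legitimate whenever $a_w(x)\in(0,\frac m{\beta-1})$, and replaced by the trivial fixed-point tail otherwise, in which case both children are defined to be distinct one-step extensions that were forced at the last genuine branch rather than at $w$ itself (one reindexes so that a branch is never ``used up''). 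Then for each $\xi\in\{0,1\}^{\mathbb N}$ the nested sequences $(a_{\xi|_k})_{k\ge1}$ determine a unique element of $\Omega_{\beta,m}(x)$, and the map $\xi\mapsto$ this element is injective because distinct $\xi,\xi'$ first differ at some finite level, forcing the corresponding infinite sequences of maps to differ at a coordinate determined at that level. Hence $\Omega_{\beta,m}(x)$ contains a copy of $\{0,1\}^{\mathbb N}$ and is uncountable.

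The main obstacle, and the only place where real care is needed, is the bookkeeping around points that land on the endpoints $0$ and $\frac m{\beta-1}$: one must verify that such events do not destroy the binary branching structure, which is exactly why the ``interior of a choice interval'' in the hypothesis matters — it guarantees two genuinely distinct continuations at each branch — together with the fact from Lemma~\ref{Map properties} that an endpoint, once reached, can be held forever by the appropriate constant map. Everything else (that the finite prefixes built this way are indeed in $\Omega_{\beta,m,n}(x)$, that $n(w)\to\infty$, that nested finite sequences define a unique infinite sequence) is routine.
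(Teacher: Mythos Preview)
Your approach is the same as the paper's --- build a binary tree by branching at choice intervals --- but you miss the one observation that makes the argument clean, and your workaround for the resulting ``endpoint'' worry is not actually sufficient.

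The paper's proof simply notes that if $y$ lies in the \emph{interior} of the $i$-th choice interval $\bigl(\tfrac{i}{\beta},\tfrac{(i-1)\beta+m-(i-1)}{\beta(\beta-1)}\bigr)$, then both $T_{\beta,i-1}(y)$ and $T_{\beta,i}(y)$ land in the \emph{open} interval $(0,\tfrac{m}{\beta-1})$. Indeed $T_{\beta,i}(\tfrac{i}{\beta})=0$ and $T_{\beta,i-1}\bigl(\tfrac{(i-1)\beta+m-(i-1)}{\beta(\beta-1)}\bigr)=\tfrac{m}{\beta-1}$ are the only ways to hit an endpoint, and both are excluded by the interior assumption. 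Hence the hypothesis applies again to each child, the recursion never stalls, and the full binary tree drops out with no bookkeeping. This is precisely why ``interior'' appears in the statement.

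Your freezing device, by contrast, does not on its own guarantee uncountability. If one child could freeze at every branch point along an infinite path, the tree would degenerate to a spine with countably many leaves hanging off it --- only countably many infinite branches. Your remark that ``branching happens before freezing'' and the vague reindexing do not rule this out. The argument is rescued only because the endpoint case you are protecting against is vacuous, for the reason above; once you make that one-line computation, all of the endpoint discussion can be deleted.
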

The proof of this proposition is essentially contained in the proof of Theorem 1 in \cite{SidQueenMary}.
\begin{proof}
Let $x\in(0,\frac{m}{\beta-1})$. Suppose there exists $n\in\mathbb{N}$ and $a\in\Omega_{\beta,m,n}(x)$ such that $a(x)\in(\frac{i}{\beta},\frac{(i-1)\beta+m-(i-1)}{\beta(\beta-1)}),$ for some $i\in\{1,\ldots,m\}.$ As $a(x)$ is an element of the interior of a choice interval both $T_{\beta,i-1}(a(x))\in(0,\frac{m}{\beta-1})$ and $T_{\beta,i}(a(x))\in(0,\frac{m}{\beta-1}).$ As such our hypothesis applies to both $T_{\beta,i-1}(a(x))$ and $T_{\beta,i}(a(x)),$ and we can assert that there exists a finite sequence of maps that map these two distinct images of $x$ into the interior of another choice interval. Repeating this procedure arbitrarily many times it is clear that $\Omega_{\beta,m}(x)$ is uncountable.
\end{proof}
By Proposition \ref{Uncountable proposition}, to prove Theorem \ref{Main thm} it suffices to show that for $\beta\in(1,\mathcal{G}(m))$ every $x\in (0,\frac{m}{\beta-1})$ can be mapped into the interior of a choice interval, and for $\beta\in(\mathcal{G}(m),m+1]$ there exists $x\in(0,\frac{m}{\beta-1})$ that never maps into a choice interval.

We define the \textit{switch region} to be the interval $$\Big[\frac{1}{\beta},\frac{(m-1)\beta +1}{\beta(\beta-1)}\Big].$$ The significance of this interval is that if a point $x$ has a choice of digit in the $j$-th entry of a $\beta$-expansion, then there exists $a\in\Omega_{\beta,m,j-1}(x)$ such that $a(x)\in[\frac{1}{\beta},\frac{(m-1)\beta +1}{\beta(\beta-1)}]$. The following lemmas are useful in understanding the dynamics of the maps $T_{\beta,i}$ around the switch region, understanding these dynamics will be important in our proof of Theorem \ref{Main thm}.

\begin{lem}
\label{switch lemma}
For $\beta\in(1,\frac{m+\sqrt{m^{2}+4}}{2})$ and $x\in(0,\frac{m}{\beta-1})$ there exists a finite sequence of maps that map $x$ into the interior of our switch region. 
\end{lem}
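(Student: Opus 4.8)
The plan is to show that the map $T_{\beta,1}$ pushes points on the right of the switch region to the left, the map $T_{\beta,m-1}$ pushes points on the left of the switch region to the right, and that for $\beta$ below the stated threshold neither map can ``overshoot'' the switch region, so that iterating the appropriate one of these two maps eventually lands any $x\in(0,\tfrac{m}{\beta-1})$ inside the interior of the switch region. First I would record the fixed points: $T_{\beta,1}$ has fixed point $\tfrac{1}{\beta-1}$ and $T_{\beta,m-1}$ has fixed point $\tfrac{m-1}{\beta-1}$, and one checks easily that both of these lie in the switch region $[\tfrac{1}{\beta},\tfrac{(m-1)\beta+1}{\beta(\beta-1)}]$ for $\beta$ in the relevant range. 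By Lemma \ref{Map properties}, $T_{\beta,1}$ moves every point to the right of $\tfrac{1}{\beta-1}$ strictly toward $\tfrac{1}{\beta-1}$ (decreasing it), and $T_{\beta,m-1}$ moves every point to the left of $\tfrac{m-1}{\beta-1}$ strictly toward $\tfrac{m-1}{\beta-1}$ (increasing it).

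Next I would split $(0,\tfrac{m}{\beta-1})$ into three pieces: the switch region itself (where there is nothing to do), the part to the right of the switch region, and the part to the left. For $x$ to the right of the switch region, so $x>\tfrac{(m-1)\beta+1}{\beta(\beta-1)}$, I claim repeatedly applying $T_{\beta,1}$ works: the images form a strictly decreasing sequence converging to $\tfrac{1}{\beta-1}$, so they cannot all stay to the right of the switch region; the only way to fail to enter the \emph{interior} is to jump from a point strictly right of the region to a point strictly left of it (or onto the boundary), i.e.\ to overshoot. To rule this out I must verify that applying $T_{\beta,1}$ to the right endpoint of the switch region does not produce a point to the left of the left endpoint — equivalently $T_{\beta,1}\big(\tfrac{(m-1)\beta+1}{\beta(\beta-1)}\big)\ge \tfrac{1}{\beta}$, since $T_{\beta,1}$ is increasing and orientation-preserving this controls the whole overshoot issue. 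Computing $T_{\beta,1}\big(\tfrac{(m-1)\beta+1}{\beta(\beta-1)}\big)=\tfrac{(m-1)\beta+1}{\beta-1}-1=\tfrac{(m-1)\beta+2-\beta}{\beta-1}=\tfrac{(m-2)\beta+2}{\beta-1}$ and requiring this to be $\ge\tfrac{1}{\beta}$ reduces, after clearing denominators, to a quadratic inequality in $\beta$; the boundary case is exactly $\beta^2-m\beta-1=0$, whose positive root is $\tfrac{m+\sqrt{m^2+4}}{2}$. Thus for $\beta$ strictly below this value the required inequality is strict, so no overshoot occurs and iterating $T_{\beta,1}$ lands $x$ in the interior of the switch region in finitely many steps (finiteness because the distance to $\tfrac1{\beta-1}$ contracts by the fixed factor $\beta$ each time, so we cannot accumulate outside a neighbourhood of the fixed point that already lies inside the open switch region). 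The case where $x$ lies to the left of the switch region is symmetric, using $T_{\beta,m-1}$ and its fixed point $\tfrac{m-1}{\beta-1}$; one checks $T_{\beta,m-1}\big(\tfrac1\beta\big)\le\tfrac{(m-1)\beta+1}{\beta(\beta-1)}$, which yields the same quadratic threshold $\beta^2-m\beta-1\le 0$.

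The main obstacle is precisely the overshoot analysis: one must be careful that ``the iterates converge to the fixed point'' does not by itself give entry into the \emph{open} switch region, because in principle an iterate could land exactly on a boundary point or leap across the region in a single step, and then one is stuck (the boundary points of the switch region are not in its interior, and Proposition \ref{Uncountable proposition} needs the interior). Handling this cleanly means combining the monotone-convergence fact with the explicit one-step estimate at the endpoint of the switch region, and checking that the fixed point genuinely lies in the \emph{interior} (not merely the closure) so that once an iterate is close enough to the fixed point it is automatically interior. A minor additional check is the boundary behaviour: one should confirm that a point cannot get trapped oscillating on the two endpoints, but since $T_{\beta,1}$ is strictly decreasing on its distance to $\tfrac1{\beta-1}$ and the sole fixed point is interior, no such trapping is possible. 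With these pieces assembled the lemma follows for all $\beta\in(1,\tfrac{m+\sqrt{m^2+4}}{2})$.
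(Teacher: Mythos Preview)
Your proposal rests on a basic misreading of the dynamics. Lemma \ref{Map properties} says that $T_{\beta,i}(x)>x$ for $x>\frac{i}{\beta-1}$ and that $|T_{\beta,i}(x)-\frac{i}{\beta-1}|=\beta\,|x-\frac{i}{\beta-1}|$: the distance to the fixed point is \emph{multiplied} by $\beta>1$, not contracted. Every fixed point is repelling. In particular, applying $T_{\beta,1}$ to a point $x>\frac{1}{\beta-1}$ moves it further to the right, so iterating $T_{\beta,1}$ on a point above the switch region produces an increasing (not decreasing) sequence, and for $m\ge 2$ the iterates quickly leave $I_{\beta,m}$ altogether and are inadmissible. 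The symmetric claim about $T_{\beta,m-1}$ fails for the same reason.

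The paper's argument uses $T_{\beta,0}$ on points \emph{below} the switch region and $T_{\beta,m}$ on points \emph{above} it, which exploits repulsion in the correct direction: the fixed point of $T_{\beta,0}$ is $0$, so for $x\in(0,\frac{1}{\beta})$ one has $T_{\beta,0}(x)>x$ and the iterates move right toward the switch region (while trivially staying in $I_{\beta,m}$); symmetrically $T_{\beta,m}$ pushes points in $(\frac{(m-1)\beta+1}{\beta(\beta-1)},\frac{m}{\beta-1})$ to the left. The only thing to verify is that one step cannot overshoot the whole switch region, and by monotonicity this reduces to checking $T_{\beta,0}(\frac{1}{\beta})<\frac{(m-1)\beta+1}{\beta(\beta-1)}$ and $T_{\beta,m}(\frac{(m-1)\beta+1}{\beta(\beta-1)})>\frac{1}{\beta}$, both of which are equivalent to $\beta^{2}-m\beta-1<0$.
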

\begin{proof} 
If $x$ is contained within the interior of the switch region we are done, let us suppose otherwise. By the monotonicity of the maps $T_{\beta,0}$ and $T_{\beta,m}$ it suffices to show that $$T_{\beta,0}\Big(\frac{1}{\beta}\Big)<\frac{(m-1)\beta+1}{\beta(\beta-1)}\textrm{ and } T_{\beta,m}\Big(\frac{(m-1)\beta+1}{\beta(\beta-1)}\Big)>\frac{1}{\beta}.$$ Both of these inequalities are equivalent to $\beta^{2}-m\beta-1<0,$ applying the quadratic formula we can conclude our result.
\end{proof}
\begin{remark}
When $m=1$ the switch region is a choice interval. An application of Lemma \ref{Bijection lemma}, Proposition \ref{Uncountable proposition} and Lemma \ref{switch lemma} yields the result stated in \cite{Erdos}, i.e, for $\beta\in(1,\frac{1+\sqrt{5}}{2})$ and $x\in(0,\frac{1}{\beta-1})$ the set $\Sigma_{\beta,1}(x)$ is uncountable.
\end{remark}
\begin{lem}
\label{interior choices}
For $\beta\in(1,\frac{m+2}{2})$ every $x$ in the interior of the switch region is contained in the interior of a choice interval.
\end{lem}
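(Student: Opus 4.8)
The plan is to prove the lemma by showing that the $m$ choice intervals
\[
C_i=\Big[\frac{i}{\beta},\frac{(i-1)\beta+m-(i-1)}{\beta(\beta-1)}\Big],\qquad i\in\{1,\ldots,m\},
\]
cover the interior of the switch region $S=[\frac{1}{\beta},\frac{(m-1)\beta+1}{\beta(\beta-1)}]$, and in fact that every point of the interior of $S$ lies in the interior of at least one $C_i$.

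First I would pin down the elementary geometry of the intervals $C_i$. Rewriting the right endpoint of $C_i$ as $\frac{i-1}{\beta}+\frac{m}{\beta(\beta-1)}$, one sees at once that the left endpoints $\frac{i}{\beta}$ and the right endpoints $\frac{i-1}{\beta}+\frac{m}{\beta(\beta-1)}$ are both strictly increasing in $i$, so the $C_i$ are listed from left to right; substituting $i=1$ shows that $C_1$ shares its left endpoint with $S$, and substituting $i=m$ shows that $C_m$ shares its right endpoint with $S$. The crucial step is then to check that consecutive intervals overlap in their interiors, i.e.\ that for every $i\in\{1,\ldots,m-1\}$ the left endpoint of $C_{i+1}$ lies strictly below the right endpoint of $C_i$:
\[
\frac{i+1}{\beta}<\frac{i-1}{\beta}+\frac{m}{\beta(\beta-1)}.
\]
Multiplying through by $\beta$, this inequality is equivalent to $2<\frac{m}{\beta-1}$, that is to $\beta<\frac{m+2}{2}$, which is precisely our hypothesis. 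The point worth stressing is that after simplification the inequality no longer involves $i$, so the single bound $\beta<\frac{m+2}{2}$ secures every consecutive overlap simultaneously.

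With these facts in hand the conclusion is routine. Given $y$ in the interior of $S$, let $j$ be the largest index with $\frac{j}{\beta}<y$ (this set of indices is nonempty since $y>\frac{1}{\beta}$). If $j=m$ then $\frac{m}{\beta}<y<\frac{(m-1)\beta+1}{\beta(\beta-1)}$, and since the right endpoint of $C_m$ equals the right endpoint of $S$ this places $y$ in the interior of $C_m$. If $j<m$, then maximality of $j$ gives $y\le\frac{j+1}{\beta}$, which by the overlap inequality for the pair $(j,j+1)$ is strictly less than the right endpoint of $C_j$; combined with $y>\frac{j}{\beta}$ this places $y$ in the interior of $C_j$. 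I do not expect a serious obstacle here: the only things requiring care are keeping track of which endpoints are included and confirming that the overlap inequality genuinely collapses to the $i$-independent condition $\beta<\frac{m+2}{2}$.
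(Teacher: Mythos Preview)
Your proof is correct and follows essentially the same approach as the paper. Both arguments reduce to the single inequality $\frac{i+1}{\beta}<\frac{(i-1)\beta+m-(i-1)}{\beta(\beta-1)}$, which collapses to $\beta<\frac{m+2}{2}$; the paper phrases this as the $(i-1)$-th and $(i+1)$-th digit intervals intersecting nontrivially, while you phrase it directly as consecutive choice intervals overlapping, but these are the same condition. Your write-up is more explicit about the final covering step (choosing the largest $j$ with $\frac{j}{\beta}<y$), which the paper leaves implicit.
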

\begin{proof}
It suffices to show that for each $i\in\{1,2,\ldots,m-1\}$ the $(i-1)$-th and $(i+1)$-th digit intervals intersect in a nontrivial interval. This is equivalent to $$\frac{i+1}{\beta}<\frac{(i-1)\beta+m-(i-1)}{\beta(\beta-1)},$$ a simple manipulation yields that this is equivalent to $\beta<\frac{m+2}{2}.$
\end{proof} We refer the reader to Figure \ref{fig1} for a diagram depicting the case where $\beta<\frac{m+2}{2}.$ For $i\in\{1,2,\ldots,m-1\}$ and $\beta\geq \frac{m+2}{2}$ the interval $$\Big[\frac{(i-1)\beta+m-(i-1)}{\beta(\beta-1)},\frac{i+1}{\beta}\Big]$$ is well defined. We refer to this interval as the \textit{$i$-th fixed digit interval}. The significance of this interval is that if a point $x$ is contained in the interior of the $i$-th fixed digit interval only $T_{\beta,i}$ maps $x$ into $I_{\beta,m}$. Similarly we define the $0$-th fixed digit interval to be $[0,\frac{1}{\beta}]$ and the $m$-th fixed digit interval to be $[\frac{(m-1)\beta+1}{\beta(\beta-1)},\frac{m}{\beta-1}].$ Understanding how the different $T_{\beta,i}$'s behave on these intervals will be important when it comes to constructing generalised golden ratios in the case where $m$ is odd.

\begin{figure}[t]
\centering \unitlength=0.54mm
\begin{picture}(150,150)(0,-10)
\thinlines
\path(35,0)(0,0)(0,150)(150,150)(150,0)(115,0)
\put(-2,-7){$0$}
\put(143,-7){$\frac{m}{\beta-1}$}
\put(33,-8){$\frac{1}{\beta}$}
\put(101,-9){$\frac{(m-1)\beta+1}{\beta(\beta-1)}$}

\thicklines
\path(0,0)(80,150)
\path(35,0)(115,150)
\path(70,0)(150,150)
\path(70,0)(80,0)
\path(70,2)(70,-2)
\path(80,2)(80,-2)
\dottedline(80,150)(80,0)
\dottedline(115,150)(115,0)
\dottedline(35,0)(70,0)
\dottedline(80,0)(115,0)
\end{picture}
\caption{The case where $\beta\in(1,\frac{m+2}{2})$}
    \label{fig1}
\end{figure}
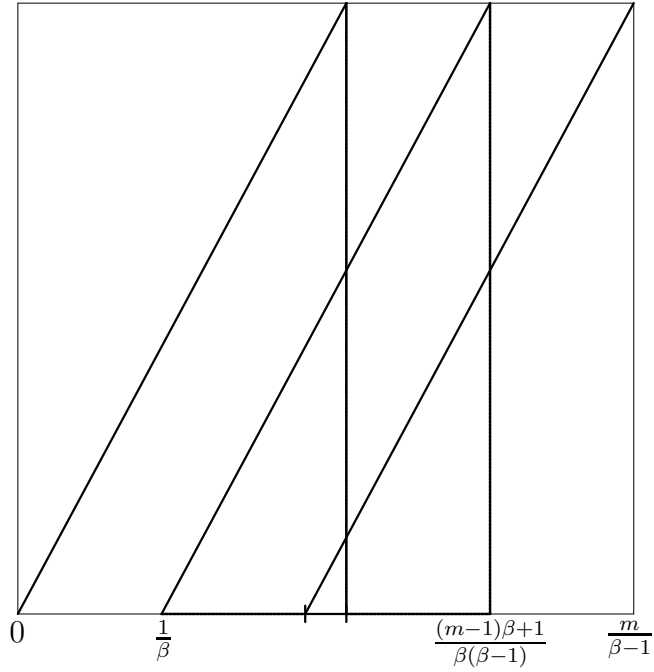

\section{Proof of Theorem \ref{Main thm}}
We are now in a position to prove Theorem \ref{Main thm}, for ease of exposition we reduce our analysis to two cases, when $m$ is even and when $m$ is odd. 

\subsection{Case where $m$ is even}
In what follows we assume $m=2k$ for some $k\in \mathbb{N}$.
\begin{prop}
\label{Even prop 1}
For $\beta\in(1,k+1)$ every $x\in(0,\frac{m}{\beta-1})$ has uncountably many $\beta$-expansions.
\end{prop}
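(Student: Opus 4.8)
The plan is to reduce, via Proposition~\ref{Uncountable proposition} and Lemma~\ref{Bijection lemma}, to the purely dynamical statement that for $\beta\in(1,k+1)$ every $x\in(0,\frac{m}{\beta-1})$ can be mapped by some finite sequence of maps from $\{T_{\beta,0},\ldots,T_{\beta,m}\}$ into the interior of a choice interval. Indeed, once this is established, Proposition~\ref{Uncountable proposition} gives that $\Omega_{\beta,m}(x)$ is uncountable, and Lemma~\ref{Bijection lemma} then yields that $|\Sigma_{\beta,m}(x)|=|\Omega_{\beta,m}(x)|$ is uncountable, which is exactly the assertion of the proposition.

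The key observation is the chain $k+1=\frac{m+2}{2}<\frac{m+\sqrt{m^{2}+4}}{2}$, which holds because $m=2k$ and $\sqrt{m^{2}+4}>2$. Hence for $\beta\in(1,k+1)$ the hypotheses of both Lemma~\ref{switch lemma} (requiring $\beta<\frac{m+\sqrt{m^{2}+4}}{2}$) and Lemma~\ref{interior choices} (requiring $\beta<\frac{m+2}{2}$) are simultaneously in force. So first I would apply Lemma~\ref{switch lemma} to produce a finite sequence of maps $a$ with $a(x)$ in the interior of the switch region; since the interior of the switch region $[\frac{1}{\beta},\frac{(m-1)\beta+1}{\beta(\beta-1)}]$ is contained in $(0,\frac{m}{\beta-1})$ (the right endpoint is below $\frac{m}{\beta-1}$ precisely because $\beta>1$), the point $a(x)$ is a legitimate input for Lemma~\ref{interior choices}, which then places $a(x)$ in the interior of some choice interval. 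This verifies the hypothesis of Proposition~\ref{Uncountable proposition} for every $x\in(0,\frac{m}{\beta-1})$, and the proof is complete.

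There is essentially no serious obstacle: the analytic content has already been packaged into Lemmas~\ref{switch lemma} and~\ref{interior choices} and into Proposition~\ref{Uncountable proposition}, and the argument is just a matter of chaining them together. The only points that need a moment's care are the numerical comparison of the thresholds $\frac{m+2}{2}$ and $\frac{m+\sqrt{m^{2}+4}}{2}$, so that a single interval of $\beta$ makes both lemmas applicable, and the observation that the switch region sits strictly inside $(0,\frac{m}{\beta-1})$, so that the output of Lemma~\ref{switch lemma} can legitimately be fed into Lemma~\ref{interior choices}; both are immediate. The genuinely harder half — that $k+1$ is sharp, i.e.\ that for $\beta\in(k+1,m+1]$ some $x$ has a unique $\beta$-expansion — lies in the complementary proposition and is not needed here.
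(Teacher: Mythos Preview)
Your proposal is correct and follows essentially the same route as the paper: reduce via Lemma~\ref{Bijection lemma} and Proposition~\ref{Uncountable proposition} to the claim that every $x$ can be mapped into the interior of a choice interval, then invoke Lemma~\ref{switch lemma} (using $k+1=\frac{m+2}{2}<\frac{m+\sqrt{m^{2}+4}}{2}$) followed by Lemma~\ref{interior choices}. The extra check that the switch region lies inside $(0,\frac{m}{\beta-1})$ is not actually needed, since Lemma~\ref{interior choices} applies directly to any point in the interior of the switch region, but it does no harm.
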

\begin{proof}
By Lemma \ref{Bijection lemma} and Proposition \ref{Uncountable proposition} it suffices to show that every $x\in(0,\frac{m}{\beta-1})$ can be mapped into the interior of a choice interval. It is a simple exercise to show that $\frac{m+2}{2}<\frac{m^{2}+\sqrt{m^{2}+4}}{2}$ for all $m\in\mathbb{N},$ as such for $\beta\in(1,k+1)$ we can apply Lemma \ref{switch lemma}, therefore there exists a sequence of maps that map $x$ into the interior of the switch region. By Lemma \ref{interior choices} every point in the interior of our switch region is contained in the interior of a choice interval. 
\end{proof}

\begin{prop}
\label{Even prop 2}
For $\beta\in(k+1,m+1]$ there exists $x\in(0,\frac{m}{\beta-1})$ with a unique $\beta$-expansion.
\end{prop}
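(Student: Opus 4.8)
The plan is to exhibit an explicit point $x$ and show that it escapes every choice interval, so that by Lemma \ref{Bijection lemma} and the discussion following Proposition \ref{Uncountable proposition} it has a unique $\beta$-expansion. The natural candidate is the fixed point of the "middle" map $T_{\beta,k}$, namely $x=\frac{k}{\beta-1}$, which since $m=2k$ is precisely the midpoint $\frac{m}{2(\beta-1)}$ of $I_{\beta,m}$. For $\beta>k+1$ I expect this point to lie in the interior of the $k$-th fixed digit interval $\big[\frac{(k-1)\beta+m-(k-1)}{\beta(\beta-1)},\frac{k+1}{\beta}\big]$ (note $\frac{m+2}{2}=k+1$, so for these $\beta$ the fixed digit intervals are genuinely nontrivial); this is a short inequality check. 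Because only $T_{\beta,k}$ maps the interior of the $k$-th fixed digit interval back into $I_{\beta,m}$, and because $\frac{k}{\beta-1}$ is the fixed point of $T_{\beta,k}$, the orbit of $x$ under the only admissible sequence of maps is constant: $a^{(n)}(x)=\frac{k}{\beta-1}$ for all $n$, with $a_i=T_{\beta,k}$ forced at every step. Hence $\Omega_{\beta,m}(x)=\{(T_{\beta,k})_{i=1}^\infty\}$ is a singleton, giving $|\Sigma_{\beta,m}(x)|=1$.

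Carrying this out, the first step is to verify $\frac{k}{\beta-1}>0$ and $\frac{k}{\beta-1}<\frac{m}{\beta-1}=\frac{2k}{\beta-1}$, both immediate, so $x\in(0,\frac{m}{\beta-1})$. The second step is the containment $\frac{(k-1)\beta+m-(k-1)}{\beta(\beta-1)}<\frac{k}{\beta-1}<\frac{k+1}{\beta}$: substituting $m=2k$, the left inequality reduces after clearing denominators to $(k-1)\beta+k+1<k\beta$, i.e. $k+1<\beta$, and the right inequality reduces to $k\beta<(k+1)(\beta-1)$, i.e. $k+1<\beta$ again. So both hold exactly when $\beta>k+1$, which is our hypothesis, and at the endpoint $\beta=m+1$ one checks the point still lies in the (now possibly degenerate-free) interior. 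The third step invokes the property stated in the excerpt that on the interior of the $k$-th fixed digit interval only $T_{\beta,k}$ maps into $I_{\beta,m}$; combined with $T_{\beta,k}\big(\frac{k}{\beta-1}\big)=\frac{k}{\beta-1}$ from Lemma \ref{Map properties}, an easy induction shows every $a\in\Omega_{\beta,m}(x)$ satisfies $a_i=T_{\beta,k}$ for all $i$.

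The main obstacle, such as it is, is purely bookkeeping at the right endpoint $\beta=m+1=2k+1$: there $I_{\beta,m}=[0,1]$, the digit intervals degenerate to points or touch, and one must check that $\frac{k}{\beta-1}=\frac12$ still fails to lie in any choice interval (indeed for $\beta=m+1$ the choice intervals are all trivial, so this is vacuous, but it should be remarked on). One should also note that for $m=2$, i.e. $k=1$, the argument degenerates gracefully: the point is $\frac{1}{\beta-1}$, the fixed point of $T_{\beta,1}$, and for $\beta\in(2,3]$ it is trapped in the $1$-st fixed digit interval. Beyond that, every inequality in the proof is a linear inequality in $\beta$ equivalent to $\beta>k+1$, so there is no real analytic difficulty; the only care needed is to phrase the "only $T_{\beta,k}$ is admissible" step using the fixed digit interval terminology already set up in Section 2 rather than re-deriving it.
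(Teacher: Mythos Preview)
Your proposal is correct and follows essentially the same approach as the paper: both take the fixed point $x=\frac{k}{\beta-1}$ of $T_{\beta,k}$ and verify the pair of inequalities $\frac{(k-1)\beta+m-(k-1)}{\beta(\beta-1)}<\frac{k}{\beta-1}<\frac{k+1}{\beta}$, each of which reduces to $\beta>k+1$. Your version is slightly more verbose (the explicit induction on the orbit, the endpoint remarks for $\beta=m+1$ and $k=1$) but the core argument is identical.
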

\begin{proof}
It suffices to show that there exists $x\in (0,\frac{m}{\beta-1})$ that never maps into a choice interval. We consider the point $\frac{k}{\beta-1},$ we will show that this point has a unique $\beta$-expansion. This point is contained in the $k$-th digit interval and is the fixed point under the map $T_{\beta,k}.$ To show that it has a unique $\beta$-expansion it suffices to show that it is not contained within the $(k-1)$-th or $(k+1)$-th digit intervals, this is equivalent to $$\frac{(k-1)\beta+m-(k-1)}{\beta(\beta-1)}<\frac{k}{\beta-1}<\frac{k+1}{\beta}.$$ Both of these inequalities are equivalent to $\beta>k+1.$
\end{proof}
Figure \ref{fig5} describes the construction of our point with unique $\beta$-expansion for $\beta\in(k+1,m+1]$. By Proposition \ref{Even prop 1} and Proposition \ref{Even prop 2} we can conclude Theorem \ref{Main thm} in the case where $m$ is even.
\begin{figure}[t]
\centering \unitlength=0.50mm
\begin{picture}(160,160)(0,-10)
\thinlines
\path(30,0)(0,0)(0,160)(160,160)(160,0)(130,0)
\path(40,0)(60,0)
\path(70,0)(90,0)
\path(100,0)(120,0)
\path(0,0)(160,160)
\dottedline(80,80)(80,0)
\dottedline(40,160)(40,0)
\dottedline(70,160)(70,0)
\dottedline(100,160)(100,0)
\dottedline(130,160)(130,0)
\dottedline(30,0)(40,0)
\dottedline(60,0)(70,0)
\dottedline(90,0)(100,0)
\dottedline(120,0)(130,0)

\thicklines
\path(0,0)(40,160)
\path(30,0)(70,160)
\path(60,0)(100,160)
\path(90,0)(130,160)
\path(120,0)(160,160)
\path(80,2)(80,-2)

\put(80,80){\circle*{3}}
\put(-2,-8){$0$}
\put(75,-10){$\frac{k}{\beta-1}$}
\put(155,-8){$\frac{m}{\beta-1}$}
\end{picture}
\caption{A point with unique $\beta$-expansion for $\beta\in(k+1,m+1]$.}
    \label{fig5}
\end{figure}
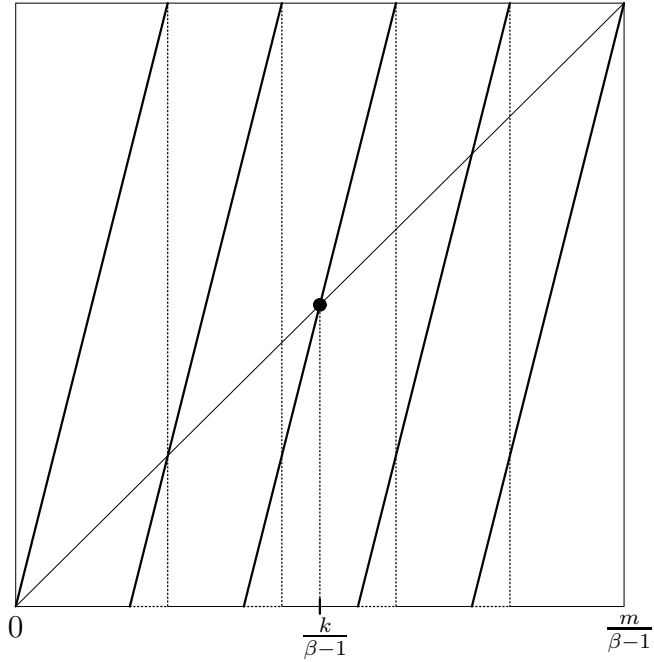

\subsection{Case where $m$ is odd}
The analysis of the case where $m$ is odd is somewhat more intricate. In what follows we assume $m=2k+1$ for some $k\in \mathbb{N}.$ Before finishing our proof of Theorem \ref{Main thm} we require the following technical results.

\begin{lem}
\label{Fixed point lemma}
For $\beta\in(1,k+2)$ the fixed point of $T_{\beta,i}$ is contained in the interior of the choice interval $[\frac{i}{\beta},\frac{(i-1)\beta+m-(i-1)}{\beta(\beta-1)}]$ for $i\in\{1,\ldots,k\},$ and in the interior of the choice interval $[\frac{i+1}{\beta},\frac{i\beta+m-i}{\beta(\beta-1)}]$ for $i\in\{k+1,\ldots,m-1\}.$
\end{lem}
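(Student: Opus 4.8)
The plan is to invoke Lemma~\ref{Map properties}, which identifies the fixed point of $T_{\beta,i}$ as $\frac{i}{\beta-1}$, and then simply check that this point lies strictly between the two endpoints of the asserted choice interval. Since $m=2k+1$ throughout this subsection, each of these four endpoint comparisons reduces, after clearing the (positive) denominators, to a linear inequality in $\beta$, and I expect the tightest of them in each case to be exactly $\beta<k+2$.

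First I would dispose of the range $i\in\{1,\ldots,k\}$ with the choice interval $[\frac{i}{\beta},\frac{(i-1)\beta+m-(i-1)}{\beta(\beta-1)}]$. The lower inequality $\frac{i}{\beta}<\frac{i}{\beta-1}$ holds for every $\beta>1$ and $i\geq 1$, since $\frac{1}{\beta}<\frac{1}{\beta-1}$. For the upper inequality, multiplying $\frac{i}{\beta-1}<\frac{(i-1)\beta+m-(i-1)}{\beta(\beta-1)}$ through by $\beta(\beta-1)>0$ and simplifying leaves $\beta<m-i+1=2k+2-i$; over $i\in\{1,\ldots,k\}$ the worst case is $i=k$, which gives precisely $\beta<k+2$. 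Next I would treat $i\in\{k+1,\ldots,m-1\}$ with the choice interval $[\frac{i+1}{\beta},\frac{i\beta+m-i}{\beta(\beta-1)}]$. Clearing denominators in the upper inequality $\frac{i}{\beta-1}<\frac{i\beta+m-i}{\beta(\beta-1)}$ yields $0<m-i$, which holds since $i\leq m-1$; clearing denominators in the lower inequality $\frac{i+1}{\beta}<\frac{i}{\beta-1}$ yields $\beta<i+1$, and over $i\in\{k+1,\ldots,m-1\}$ the worst case $i=k+1$ again gives exactly $\beta<k+2$.

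Combining the two ranges, whenever $\beta\in(1,k+2)$ every one of the required strict inequalities holds, which is the statement of the lemma. There is no genuine obstacle here; the only point demanding a little care is to confirm that the extremal value of $i$ in each range really does produce the tightest constraint, so that the single hypothesis $\beta<k+2$ suffices uniformly for all the relevant $i$.
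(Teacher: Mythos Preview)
Your argument is correct and follows essentially the same route as the paper: identify the fixed point as $\frac{i}{\beta-1}$, compare it with the endpoints of the relevant choice interval, and reduce each comparison to a linear inequality in $\beta$ whose tightest instance is $\beta<k+2$. The paper's proof is simply more abbreviated, writing out only the nontrivial upper inequality for $i\in\{1,\ldots,k\}$ and dismissing the remaining case as similar, whereas you verify all four endpoint comparisons explicitly.
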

\begin{proof}
Let $i\in\{1,\ldots,k\}$. To show that the fixed point $\frac{i}{\beta-1}$ is contained in the interior of the interval $[\frac{i}{\beta},\frac{(i-1)\beta+m-(i-1)}{\beta(\beta-1)}]$ it suffices to show that $$\frac{i}{\beta-1}<\frac{(i-1)\beta+m-(i-1)}{\beta(\beta-1)}.$$ This is equivalent to $\beta<m+1-i$, which for $\beta\in(1,k+2)$ is true for all $i\in\{1,\ldots,k\}$. The case where $i\in\{k+1,\ldots,m-1\}$ is proved similarly.
\end{proof} 
\begin{cor}
\label{Increasing cor}
For $\beta\in[\frac{2k+3}{2},k+2)$ the map $T_{\beta,i}$ satisfies $T_{\beta,i}(x)-\frac{i}{\beta-1}=\beta(x-\frac{i}{\beta-1})$ for all $x$ contained in the $i$-th fixed digit interval for $i\in\{1,\ldots,k\},$ and $\frac{i}{\beta-1}-T_{\beta,i}(x)=\beta(\frac{i}{\beta-1}-x)$ for all $x$ contained in the $i$-th fixed digit interval for $i\in\{k+1,\ldots,m-1\}.$
\end{cor}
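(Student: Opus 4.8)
The plan is to read off the corollary from Lemma~\ref{Fixed point lemma} together with the last bullet of Lemma~\ref{Map properties}. The first point to make is that the displayed equalities are, as bare identities, immediate from $T_{\beta,i}(x)=\beta x-i$ (equivalently, from the distance-scaling bullet of Lemma~\ref{Map properties}, since $\frac{i}{\beta-1}$ is the fixed point of $T_{\beta,i}$); the content of the corollary is that on the $i$-th fixed digit interval the quantities $x-\frac{i}{\beta-1}$ and $T_{\beta,i}(x)-\frac{i}{\beta-1}$ have a definite sign, which amounts to saying that the fixed point $\frac{i}{\beta-1}$ lies strictly to one side of that interval. I would also note at the outset that the hypothesis $\beta\geq\frac{2k+3}{2}=\frac{m+2}{2}$ is exactly what makes the $i$-th fixed digit interval $[\frac{(i-1)\beta+m-(i-1)}{\beta(\beta-1)},\frac{i+1}{\beta}]$ a well-defined nonempty interval, and that $\frac{2k+3}{2}<k+2$, so the stated range of $\beta$ is nonempty.

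For $i\in\{1,\ldots,k\}$: since $\beta<k+2$, Lemma~\ref{Fixed point lemma} places $\frac{i}{\beta-1}$ in the interior of $[\frac{i}{\beta},\frac{(i-1)\beta+m-(i-1)}{\beta(\beta-1)}]$, so in particular $\frac{i}{\beta-1}<\frac{(i-1)\beta+m-(i-1)}{\beta(\beta-1)}$, which is the left endpoint of the $i$-th fixed digit interval. Hence every $x$ in that interval satisfies $x>\frac{i}{\beta-1}$, and then the second bullet of Lemma~\ref{Map properties} gives $T_{\beta,i}(x)>x>\frac{i}{\beta-1}$. Thus both $x-\frac{i}{\beta-1}$ and $T_{\beta,i}(x)-\frac{i}{\beta-1}$ are positive, and removing the absolute values in the last bullet of Lemma~\ref{Map properties} yields $T_{\beta,i}(x)-\frac{i}{\beta-1}=\beta(x-\frac{i}{\beta-1})$.

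The case $i\in\{k+1,\ldots,m-1\}$ is the mirror image: Lemma~\ref{Fixed point lemma} now puts $\frac{i}{\beta-1}$ in the interior of $[\frac{i+1}{\beta},\frac{i\beta+m-i}{\beta(\beta-1)}]$, hence $\frac{i}{\beta-1}>\frac{i+1}{\beta}$, the right endpoint of the $i$-th fixed digit interval, so $x<\frac{i}{\beta-1}$ there; the third bullet of Lemma~\ref{Map properties} then gives $T_{\beta,i}(x)<x<\frac{i}{\beta-1}$, and the last bullet of Lemma~\ref{Map properties} gives $\frac{i}{\beta-1}-T_{\beta,i}(x)=\beta(\frac{i}{\beta-1}-x)$.

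I do not expect a genuine obstacle here; this is a short deduction. The only things to be careful about are matching the endpoints of the $i$-th fixed digit interval with the endpoints of the choice intervals appearing in Lemma~\ref{Fixed point lemma}, so that the strict inequalities there become precisely the "$x$ lies on the correct side of $\frac{i}{\beta-1}$" statements that are needed, and keeping track of the fact that the well-definedness of the fixed digit interval is exactly what forces the lower bound $\frac{2k+3}{2}$ in the hypothesis.
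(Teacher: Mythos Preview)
Your proof is correct and follows essentially the same route as the paper: use Lemma~\ref{Fixed point lemma} to locate the fixed point $\frac{i}{\beta-1}$ strictly to one side of the $i$-th fixed digit interval, then invoke Lemma~\ref{Map properties}. The paper compresses this into two sentences, whereas you spell out the endpoint matching and also make the useful observation that the displayed identities hold for all $x$ as bare algebraic facts, with the real content being the sign information; this is a fair reading of what the corollary is actually asserting and how it is used downstream.
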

\begin{proof}
Let $i\in\{1,\ldots,k\}$, by Lemma \ref{Fixed point lemma} the $i$-th fixed digit interval is to the right of the fixed point of $T_{\beta,i}$, our result follows from Lemma \ref{Map properties}. The case where $i\in\{k+1,\ldots,m-1\}$ is proved similarly.
\end{proof}
\begin{lem}
\label{jump centre}
Suppose $\beta\in[\frac{2k+3}{2},\frac{k+1+\sqrt{k^{2}+6k+5}}{2})$ and $x$ is an element of the $i$-th fixed digit interval for some $i\in\{1,\ldots,m-1\}.$ For $i\in\{1,\ldots,k\}$ $$T_{\beta,i}(x)<\frac{k\beta+m-k}{\beta(\beta-1)}$$ and for $i\in\{k+1,\ldots,m-1\}$ $$T_{\beta,i}(x)>\frac{k+1}{\beta}.$$
\end{lem}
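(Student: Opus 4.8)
The plan is to combine the strict monotonicity of $T_{\beta,i}$ with a single observation: if one evaluates $T_{\beta,i}$ at the appropriate extreme endpoint of the $i$-th fixed digit interval $\big[\frac{(i-1)\beta+m-(i-1)}{\beta(\beta-1)},\frac{i+1}{\beta}\big]$, the resulting value is independent of $i$, and the whole statement then reduces to one quadratic inequality which is exactly $\beta<\mathcal{G}(m)$.

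First I would treat $i\in\{1,\ldots,k\}$. If $x$ belongs to the $i$-th fixed digit interval then $x\le\frac{i+1}{\beta}$, so by monotonicity $T_{\beta,i}(x)\le T_{\beta,i}\big(\frac{i+1}{\beta}\big)=\beta\cdot\frac{i+1}{\beta}-i=1$, a bound free of $i$. It therefore suffices to establish $1<\frac{k\beta+m-k}{\beta(\beta-1)}$, i.e.\ $\beta(\beta-1)<k\beta+m-k$; substituting $m=2k+1$ this rearranges to $\beta^{2}-(k+1)\beta-(k+1)<0$. By the quadratic formula this holds exactly when $\beta<\frac{k+1+\sqrt{(k+1)^{2}+4(k+1)}}{2}=\frac{k+1+\sqrt{k^{2}+6k+5}}{2}=\mathcal{G}(m)$, which is part of our hypothesis, so $T_{\beta,i}(x)\le 1<\frac{k\beta+m-k}{\beta(\beta-1)}$ as required.

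For $i\in\{k+1,\ldots,m-1\}$ I would argue from the left endpoint instead: $x\ge\frac{(i-1)\beta+m-(i-1)}{\beta(\beta-1)}$ gives $T_{\beta,i}(x)\ge T_{\beta,i}\big(\frac{(i-1)\beta+m-(i-1)}{\beta(\beta-1)}\big)$, and a one-line simplification shows the right-hand side equals $\frac{m+1-\beta}{\beta-1}$, again independent of $i$. It then suffices to check $\frac{m+1-\beta}{\beta-1}>\frac{k+1}{\beta}$, and clearing the (positive) denominators and again using $m=2k+1$ reduces this to the same inequality $\beta^{2}-(k+1)\beta-(k+1)<0$. Alternatively, this case is immediate from the first one via the reflection $x\mapsto\frac{m}{\beta-1}-x$, which conjugates $T_{\beta,i}$ to $T_{\beta,m-i}$, sends the $i$-th fixed digit interval to the $(m-i)$-th one, and sends $\frac{k\beta+m-k}{\beta(\beta-1)}$ to $\frac{k+1}{\beta}$.

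I do not anticipate a real obstacle: every computation is routine and the only thing requiring a moment's thought is realising that $T_{\beta,i}(x)$ should be estimated by its value at the far endpoint of the fixed digit interval --- which annihilates the dependence on $i$ --- rather than trying to control $T_{\beta,i}$ on the interior directly. The lemma itself is a preparatory step: it pins down where the images of the fixed digit intervals lie relative to the two central digit intervals, information which will be used in the proof of Theorem \ref{Main thm} in the odd case.
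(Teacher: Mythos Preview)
Your proof is correct and follows essentially the same route as the paper: evaluate $T_{\beta,i}$ at the extreme endpoint of the $i$-th fixed digit interval, reduce both cases to the single quadratic inequality $\beta^{2}-(k+1)\beta-(k+1)<0$, and invoke the hypothesis $\beta<\frac{k+1+\sqrt{k^{2}+6k+5}}{2}$. Your explicit remark that the endpoint values $T_{\beta,i}\big(\tfrac{i+1}{\beta}\big)=1$ and $T_{\beta,i}\big(\tfrac{(i-1)\beta+m-(i-1)}{\beta(\beta-1)}\big)=\tfrac{m+1-\beta}{\beta-1}$ are independent of $i$, and the symmetry observation via $x\mapsto\tfrac{m}{\beta-1}-x$, are nice clarifying additions but do not change the underlying argument.
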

\begin{proof}
By the monotonicity of the maps $T_{\beta,i}$ it is sufficient to show that $$T_{\beta,i}\Big(\frac{i+1}{\beta}\Big)< \frac{k\beta+m-k}{\beta(\beta-1)}$$ for $i\in\{1,\ldots,k\},$ and $$T_{\beta,i}\Big(\frac{(i-1)\beta+m-(i-1)}{\beta(\beta-1)}\Big)>\frac{k+1}{\beta},$$for $i\in\{k+1,\ldots,m-1\}.$ Each of these inequalties are equivalent to $\beta^{2}-(k+1)\beta-(k+1)<0.$ Our result follows by an application of the quadratic formula.
\end{proof}

\begin{prop}
\label{Odd prop 1}
For $\beta\in(1,\frac{k+1+\sqrt{k^{2}+6k+5}}{2})$ every $x\in(0,\frac{m}{\beta-1})$ has uncountably many $\beta$-expansions.
\end{prop}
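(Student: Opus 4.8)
The plan is to show that for every $\beta$ in the stated range and every $x\in(0,\frac{m}{\beta-1})$ there is a finite sequence of maps carrying $x$ into the interior of a choice interval; the result then follows from Lemma \ref{Bijection lemma} and Proposition \ref{Uncountable proposition}. I would split the range $(1,\frac{k+1+\sqrt{k^2+6k+5}}{2})$ at the point $\frac{2k+3}{2}=\frac{m+2}{2}$ into two subintervals and handle them separately, since the easy subinterval is already covered by the machinery of Section 2.

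For $\beta\in(1,\frac{m+2}{2})$ I would argue exactly as in Proposition \ref{Even prop 1}: one checks that $\frac{m+2}{2}<\frac{m+\sqrt{m^2+4}}{2}$, so Lemma \ref{switch lemma} applies and maps $x$ into the interior of the switch region, and then Lemma \ref{interior choices} places that image inside the interior of a choice interval. So the real content is the subinterval $\beta\in[\frac{2k+3}{2},\frac{k+1+\sqrt{k^2+6k+5}}{2})$, where the $i$-th fixed digit intervals ($i\in\{1,\dots,m-1\}$) are nontrivial and block the naive argument. Here I would still use Lemma \ref{switch lemma} to push $x$ into the interior of the switch region $[\frac1\beta,\frac{(m-1)\beta+1}{\beta(\beta-1)}]$. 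If this image already lies in a choice interval we are done, so assume it lands in the interior of some fixed digit interval; the only way to escape a fixed digit interval is to apply the unique available map $T_{\beta,i}$. The strategy is then to track the orbit under these forced maps and show it must eventually exit all fixed digit intervals, i.e. return to the union of choice intervals, and in fact to the interior of one.

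This is where Corollary \ref{Increasing cor} and Lemma \ref{jump centre} do the work. By Corollary \ref{Increasing cor}, on the $i$-th fixed digit interval with $i\le k$ the map $T_{\beta,i}$ expands distance from its fixed point $\frac{i}{\beta-1}$, which lies to the left by Lemma \ref{Fixed point lemma}; since $\beta>1$, iterating the forced maps strictly increases the relevant quantity, so the orbit cannot stay in the $i$-th fixed digit interval forever and must move. Lemma \ref{jump centre} controls where it lands when it jumps: from any fixed digit interval with index $i\le k$ the image satisfies $T_{\beta,i}(x)<\frac{k\beta+m-k}{\beta(\beta-1)}$, i.e. it does not overshoot past the right endpoint of the $(k+1)$-st digit interval, and symmetrically for $i\ge k+1$ the image satisfies $T_{\beta,i}(x)>\frac{k+1}{\beta}$. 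Combining these, the orbit is trapped in the bounded window between $\frac{k+1}{\beta}$ and $\frac{k\beta+m-k}{\beta(\beta-1)}$, it cannot loop back to a previously visited fixed digit interval (the monotone expansion forbids periodic behaviour within one fixed digit interval, and the bound from Lemma \ref{jump centre} forbids jumping over the central choice region), so after finitely many forced steps it lands in the interior of the $(k+1)$-st choice interval $[\frac{k+1}{\beta},\frac{k\beta+m-k}{\beta(\beta-1)}]$ — unless it lands exactly on its endpoint, a boundary case I would dispose of by a short direct estimate or by noting it can then be nudged into the interior. The main obstacle is precisely this bookkeeping: making rigorous the claim that the forced orbit must \emph{strictly} progress through the fixed digit intervals without cycling and without skipping the central choice interval, which is exactly what the quadratic inequality $\beta^2-(k+1)\beta-(k+1)<0$ in Lemma \ref{jump centre} is engineered to guarantee.
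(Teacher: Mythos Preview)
Your proposal is correct and follows essentially the same route as the paper: split at $\frac{m+2}{2}=\frac{2k+3}{2}$, dispose of the lower range exactly as in the even case via Lemmas~\ref{switch lemma} and~\ref{interior choices}, and on $[\frac{2k+3}{2},\frac{k+1+\sqrt{k^2+6k+5}}{2})$ first use Lemma~\ref{switch lemma} to enter the switch region and then iterate Corollary~\ref{Increasing cor} and Lemma~\ref{jump centre} to force the orbit out of any fixed digit interval without overshooting the central choice interval. Two small wording issues worth tidying: $\frac{k\beta+m-k}{\beta(\beta-1)}$ is the right endpoint of the $(k+1)$-st \emph{choice} interval (equivalently the $k$-th digit interval), and your phrase about the orbit being ``trapped in the bounded window between $\frac{k+1}{\beta}$ and $\frac{k\beta+m-k}{\beta(\beta-1)}$'' is garbled since that window \emph{is} the target---what the two bounds from Lemma~\ref{jump centre} actually give you is that the forced orbit from the left (resp.\ right) cannot jump past this central choice interval, and combined with the strict monotone drift from Corollary~\ref{Increasing cor} this forces eventual entry into \emph{some} choice interval (not necessarily the $(k+1)$-st).
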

\begin{proof}
The proof where $\beta\in(1,\frac{2k+3}{2})$ is analogous to that given in the even case. As such, in what follows we assume $\beta\in[\frac{2k+3}{2},\frac{k+1+\sqrt{k^{2}+6k+5}}{2}).$ We remark that $$\frac{k+1+\sqrt{k^{2}+6k+5}}{2}\leq \frac{m+\sqrt{m^{2}+4}}{2}$$ and $$\frac{k+1+\sqrt{k^{2}+6k+5}}{2}<k+2,$$ for all $k\in\mathbb{N}.$ We can therefore use Lemma \ref{switch lemma} and Corollary \ref{Increasing cor}. Let $x\in (0,\frac{m}{\beta-1}),$ we will show that there exists a sequence of maps that map $x$ into the interior of a choice interval, by Lemma \ref{Bijection lemma} and Proposition \ref{Uncountable proposition} our result follows. By Lemma \ref{switch lemma} there exist a finite sequence of maps that map $x$ into the interior of the switch region. Suppose the image of $x$ is not contained in the interior of a choice interval, then it must be contained in the $i$-th fixed digit interval for some $i\in\{1,\ldots,m-1\}.$ By repeatedly applying Corollary \ref{Increasing cor} and Lemma $\ref{jump centre}$ the image of $x$ must eventually be mapped into the interior of a choice interval.
\end{proof}
We refer the reader to Figure \ref{Fig2} for a diagram illustrating the case where $m=2k+1$ and $\beta\in[\frac{2k+3}{2},\frac{k+1+\sqrt{k^{2}+6k+5}}{2}).$
\begin{figure}[t]
\centering \unitlength=0.6mm
\begin{picture}(150,150)(0,-12)
\thinlines
\path(35,0)(0,0)(0,150)(150,150)(150,0)(115,0)
\path(45,0)(70,0)
\path(105,0)(80,0)
\path(51,2)(53,0)(51,-2)
\path(61,2)(63,0)(61,-2)
\path(99,2)(97,0)(99,-2)
\path(89,2)(87,0)(89,-2)
\put(-1,-6){$0$}
\put(145,-7){$\frac{m}{\beta-1}$}
\put(33,-8){$\frac{1}{\beta}$}
\put(104,-9){$\frac{(m-1)\beta+1}{\beta(\beta-1)}$}

\put(61,-9){$\frac{k+1}{\beta}$}
\put(76,-9){$\frac{k\beta+m-k}{\beta(\beta-1)}$}

\thicklines
\path(0,0)(45,150)
\path(35,0)(80,150)
\path(70,0)(115,150)
\path(105,0)(150,150)
\path(35,2)(35,-2)
\path(115,2)(115,-2)
\path(70,2)(70,-2)
\path(80,2)(80,-2)
\path(45,2)(45,-2)
\path(105,2)(105,-2)
\dottedline(45,150)(45,0)
\dottedline(80,150)(80,0)
\dottedline(115,150)(115,0)
\dottedline(35,0)(45,0)
\dottedline(70,0)(80,0)
\dottedline(105,0)(115,0)

\end{picture}
\caption{A diagram of the case where $m=2k+1$ and $\beta\in[\frac{2k+3}{2},\frac{k+1+\sqrt{k^{2}+6k+5}}{2})$}
   \label{Fig2}
\end{figure}
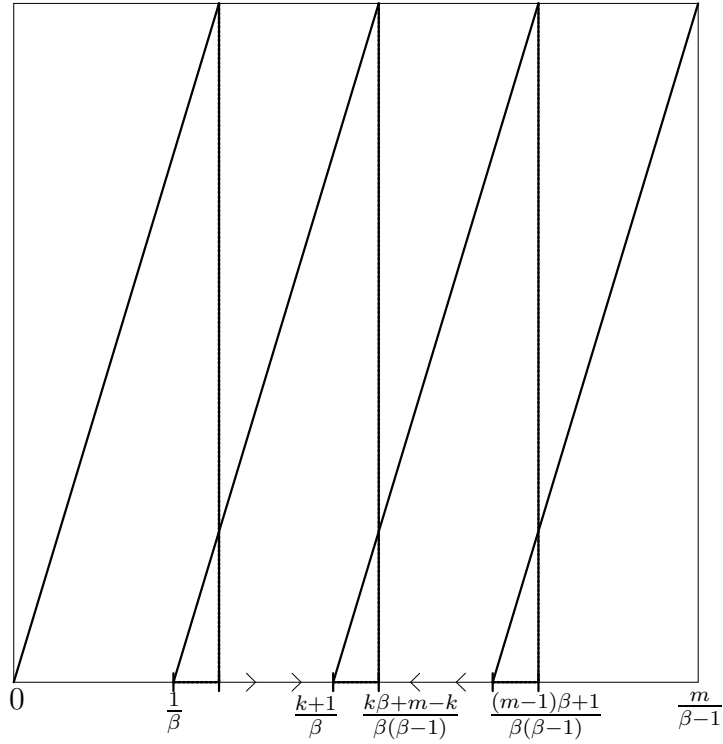
\begin{prop}
\label{Odd prop 2}
For $\beta\in(\frac{k+1+\sqrt{k^{2}+6k+5}}{2},m+1]$ there exists $x\in (0,\frac{m}{\beta-1})$ that has a unique $\beta$-expansion. 
\end{prop}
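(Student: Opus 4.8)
The plan is to exhibit a point whose orbit under the maps $T_{\beta,i}$ is a two-cycle trapped between two adjacent fixed digit intervals, so that at every stage the digit is forced. Throughout write $m=2k+1$ and $\beta\in(\frac{k+1+\sqrt{k^{2}+6k+5}}{2},m+1]$; note that $\beta>\mathcal{G}(m)>\frac{m+2}{2}$, so the fixed digit intervals are defined and, as noted earlier, a point in the interior of the $i$-th fixed digit interval is mapped into $I_{\beta,m}$ only by $T_{\beta,i}$. I would look for $x_{0}$ in the interior of the $k$-th fixed digit interval whose image $T_{\beta,k}(x_{0})$ lies in the interior of the $(k+1)$-th fixed digit interval and satisfies $T_{\beta,k+1}(T_{\beta,k}(x_{0}))=x_{0}$. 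Solving $T_{\beta,k+1}\circ T_{\beta,k}(x)=x$ forces the candidate $x_{0}=\frac{k\beta+k+1}{\beta^{2}-1}$, with image $x_{1}:=T_{\beta,k}(x_{0})=\frac{(k+1)\beta+k}{\beta^{2}-1}$; one checks at once that $x_{0}+x_{1}=\frac{m}{\beta-1}$, so $x_{0}$ and $x_{1}$ are reflections of one another through the midpoint of $I_{\beta,m}$.

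The crux is to verify that $x_{0}$ lies strictly between the endpoints $\frac{(k-1)\beta+m-(k-1)}{\beta(\beta-1)}$ and $\frac{k+1}{\beta}$ of the $k$-th fixed digit interval. Clearing denominators, the upper estimate $x_{0}<\frac{k+1}{\beta}$ is equivalent to $\beta^{2}-(k+1)\beta-(k+1)>0$, i.e.\ to $\beta>\mathcal{G}(m)$, so it holds by hypothesis. Using $m=2k+1$, the lower estimate $\frac{(k-1)\beta+m-(k-1)}{\beta(\beta-1)}<x_{0}$ is equivalent to $\beta^{2}-k\beta-(k+2)>0$; evaluating $\beta^{2}-k\beta-(k+2)$ at $\beta=\mathcal{G}(m)$ and using the defining relation $\mathcal{G}(m)^{2}=(k+1)\mathcal{G}(m)+(k+1)$ gives $\mathcal{G}(m)-1>0$, so (this quadratic opening upward with one negative root) the larger root of $\beta^{2}-k\beta-(k+2)$ lies below $\mathcal{G}(m)$ and the inequality holds for all $\beta>\mathcal{G}(m)$. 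The corresponding statement for $x_{1}$ and the $(k+1)$-th fixed digit interval then follows, either by repeating the computation or, more cleanly, by applying the reflection $x\mapsto\frac{m}{\beta-1}-x$, which conjugates $T_{\beta,i}$ to $T_{\beta,m-i}$, interchanges the $k$-th and $(k+1)$-th fixed digit intervals, and swaps $x_{0}$ and $x_{1}$.

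To conclude, I would note that $x_{0}\in(0,\frac{m}{\beta-1})$ since the $k$-th fixed digit interval lies inside $(0,\frac{m}{\beta-1})$, and that $x_{0}$ has a unique $\beta$-expansion: by Lemma \ref{Bijection lemma} it suffices to show $\left|\Omega_{\beta,m}(x_{0})\right|=1$, and since $x_{0}$ is interior to the $k$-th fixed digit interval the only admissible first map is $T_{\beta,k}$, which sends $x_{0}$ to $x_{1}$, whence the only admissible second map is $T_{\beta,k+1}$, which sends $x_{1}$ back to $x_{0}$; induction shows $\Omega_{\beta,m}(x_{0})$ is the single sequence $(T_{\beta,k},T_{\beta,k+1},T_{\beta,k},T_{\beta,k+1},\ldots)$, so the unique $\beta$-expansion of $x_{0}$ is $\overline{k(k+1)}$. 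I expect the only real obstacle to be the book-keeping: reducing the two containment conditions to the correct quadratic inequalities and confirming the root comparison, together with a quick check that the two displayed strict inequalities still hold at the endpoint $\beta=m+1$ (they do, with the fixed digit interval becoming the usual base-$(m+1)$ cell).
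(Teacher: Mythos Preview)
Your proof is correct and follows essentially the same approach as the paper: the paper exhibits the same two-cycle $x_{0}=\frac{k\beta+k+1}{\beta^{2}-1}$, $x_{1}=\frac{(k+1)\beta+k}{\beta^{2}-1}$ and reduces the containment in the $k$-th and $(k+1)$-th fixed digit intervals to the same pair of quadratic inequalities $\beta^{2}-(k+1)\beta-(k+1)>0$ and $\beta^{2}-k\beta-(k+2)>0$. Your use of the reflection $x\mapsto\frac{m}{\beta-1}-x$ to transfer the result from $x_{0}$ to $x_{1}$, and your verification of the second inequality by evaluating at $\mathcal{G}(m)$, are slightly cleaner than the paper's parallel computations and explicit root comparison, but the argument is the same.
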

\begin{proof}
We will show that the points $$\frac{k\beta+(k+1)}{\beta^{2}-1} \textrm{ and } \frac{(k+1)\beta +k}{\beta^{2}-1}$$ have a unique $\beta$-expansion. The significance of these points is that $$T_{\beta,k}\Big(\frac{k\beta+(k+1)}{\beta^{2}-1}\Big)=\frac{(k+1)\beta +k}{\beta^{2}-1}$$ and $$T_{\beta,k+1}\Big(\frac{(k+1)\beta +k}{\beta^{2}-1}\Big)=\frac{k\beta+(k+1)}{\beta^{2}-1}.$$To show that these points have a unique $\beta$-expansion it suffices to show that 
\begin{equation}
\label{equation 1}
\frac{(k-1)\beta+m-(k-1)}{\beta(\beta-1)}<\frac{k\beta +(k+1)}{\beta^{2}-1}<\frac{k+1}{\beta},
\end{equation}
and 
\begin{equation}
\label{equation 2}
\frac{k\beta+(m-k)}{\beta(\beta-1)}<\frac{(k+1)\beta +k}{\beta^{2}-1}<\frac{k+2}{\beta}.
\end{equation}
The left hand side of (\ref{equation 1}) is equivalent to $0<\beta^{2}-k\beta-(k+2)$ which is equivalent to $$\frac{k+\sqrt{k^{2}+4k+8}}{2}<\beta,$$ however $$\frac{k+\sqrt{k^{2}+4k+8}}{2}<\frac{k+1+\sqrt{k^{2}+6k+5}}{2}$$ for all $k\in \mathbb{N},$ therefore the left hand side of (\ref{equation 1}) holds. The right hand side of (\ref{equation 1}) is equivalent to $0<\beta^{2}-(k+1)\beta-(k+1).$ So (\ref{equation 1}) holds by the quadratic formula.

The right hand side of (\ref{equation 2}) is equivalent to $0<\beta^{2}-k\beta-(k+2)$ which we know to be true by the above. Similarly the left hand side of (\ref{equation 2}) is equivalent to $0<\beta^{2}-(k+1)\beta-(k+1),$ which we also know to be true. It follows that both $\frac{k\beta+(k+1)}{\beta^{2}-1}$ and $\frac{(k+1)\beta +k}{\beta^{2}-1}$ are never mapped into a choice interval and have a unique $\beta$-expansion for $\beta\in(\frac{k+1+\sqrt{k^{2}+6k+5}}{2},m+1].$
\end{proof}
We refer the reader to Figure \ref{fig6} for a diagram describing the points we constructed with unique $\beta$-expansion for $\beta\in(\frac{k+1+\sqrt{k^{2}+6k+5}}{2},m+1].$ By Proposition \ref{Odd prop 1} and Proposition \ref{Odd prop 2} we can conclude Theorem \ref{Main thm}. 

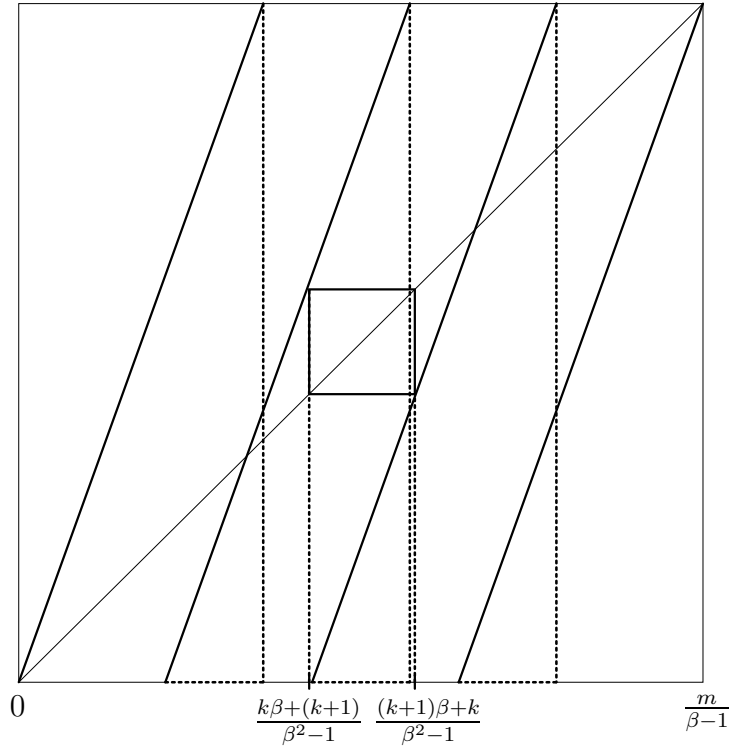
\begin{figure}[t]
\centering \unitlength=0.9mm
\begin{picture}(100,100)(0,-8)
\thinlines
\path(21.4284,0)(0,0)(0,100)(100,100)(100,0)(78.5716,0)
\path(35.7138,0)(42.8568,0)
\path(57.142,0)(64.2852,0)
\path(0,0)(100,100)
\thicklines
\path(0,0)(35.7138,100)
\path(21.4282,0)(57.142,100)
\path(42.8568,0)(78.5706,100)
\path(64.2852,0)(100,100)
\path(42.43052632,-1)(42.43052632,1)
\path(57.8947368,-1)(57.8947368,1)
\path(42.43052632,57.8947368)(57.8947368,57.8947368)(57.8947368,42.43052632)(42.43052632,42.43052632)(42.43052632,57.8947368)
\dottedline(21.4282,0)(35.7138,0)
\dottedline(42.8568,0)(57.142,0)
\dottedline(64.2852,0)(78.5706,0)

\dottedline(42.43052632,0)(42.43052632,57.8947368)
\dottedline(57.8947368,0)(57.8947368,42.43052632)
\dottedline(35.7138,100)(35.7138,0)
\dottedline(57.142,100)(57.142,0)
\dottedline(78.5706,100)(78.5706,0)

\put(-1.3,-5){$0$}
\put(97,-5){$\frac{m}{\beta-1}$}
\put(34.43052632,-7){$\frac{k\beta+(k+1)}{\beta^{2}-1}$}
\put(51.8947368,-7){$\frac{(k+1)\beta+k}{\beta^{2}-1}$}
\end{picture}
\caption{A point with unique $\beta$-expansion for $\beta\in(\frac{k+1+\sqrt{k^{2}+6k+5}}{2},m+1]$.}
    \label{fig6}
\end{figure}

\section{The set of points with unique $\beta$-expansion}
In this section we study the set of points whose $\beta$-expansion is unique for $\beta\in(\mathcal{G}(m),m+1]$. Let $$U_{\beta,m}=\Big\{x\in I_{\beta,m}|\textrm{ } \left|\Sigma_{\beta,m}(x)\right|=1\Big\}$$and $$W_{\beta,m}=\Big\{x\in \Big(\frac{m+1-\beta}{\beta-1},1\Big)|\textrm{ } \left|\Sigma_{\beta,m}(x)\right|=1\Big\}.$$ The significance of the set $W_{\beta,m}$ is that if $x\in U_{\beta,m},$ then it is a preimage of an element of $W_{\beta,m}.$ In \cite{GlenSid} the authors study the case where $m=1,$ they show that the following theorems hold.

\begin{thm}
\label{GlenSid theorem 1}
The set $U_{\beta,1}$ satisfies the following:
\begin{enumerate}
\item $\left|U_{\beta,1}\right|=\aleph_{0}$ for $\beta\in(\frac{1+\sqrt{5}}{2},\beta_{c})$
\item $\left|U_{\beta,1}\right|=2^{\aleph_{0}}$ for $\beta=\beta_{c}$
\item $U_{\beta,1}$ is a set of positive Hausdorff dimension for $\beta\in(\beta_{c},2].$
\end{enumerate}
\end{thm}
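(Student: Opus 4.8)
Following \cite{GlenSid}, the plan is to pass to symbolic dynamics and read off cardinality and Hausdorff dimension from the size of an associated subshift, exactly as one does in proving Theorem~\ref{Main thm}. As in Lemma~\ref{Bijection lemma}, a point $x\in I_{\beta,1}$ lies in $U_{\beta,1}$ precisely when the (now forced) orbit of $x$ under $T_{\beta,0},T_{\beta,1}$ never meets the switch region $[\tfrac1\beta,\tfrac1{\beta(\beta-1)}]$, which for $m=1$ is the unique choice interval. Translating this into digit strings gives the classical lexicographic description (Parry; Erd\H{o}s--Jo\'o--Komornik): writing $\alpha(\beta)=(\alpha_i)_{i\ge1}$ for the quasi-greedy expansion of $1$ in base $\beta$, a string $(\epsilon_i)\in\{0,1\}^{\mathbb N}$ is \emph{the} $(\beta,1)$-expansion of its value precisely when, for every $k\ge0$, $(\epsilon_{k+1},\epsilon_{k+2},\ldots)\prec(\alpha_1,\alpha_2,\ldots)$ whenever $k=0$ or $\epsilon_k=0$, and $(\overline{\epsilon_{k+1}},\overline{\epsilon_{k+2}},\ldots)\prec(\alpha_1,\alpha_2,\ldots)$ whenever $k=0$ or $\epsilon_k=1$, where $\overline{\epsilon}=1-\epsilon$ and $\prec$ denotes lexicographic order. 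Let $V_\beta\subseteq\{0,1\}^{\mathbb N}$ be the set of such strings. Since distinct unique expansions represent distinct reals, $x\mapsto(\epsilon_i(x))$ is a bijection $U_{\beta,1}\to V_\beta$, so $|U_{\beta,1}|=|V_\beta|$; and since each $T_{\beta,i}$ is affine with slope $\beta$ (Lemma~\ref{Map properties}) and $\beta<2$ makes the coding map bi-Lipschitz on $V_\beta$, we also have $\dim_H U_{\beta,1}=h_{\mathrm{top}}(V_\beta)/\log\beta$ (the value at $\beta=2$ being trivially $1$). Finally $\beta\mapsto\alpha(\beta)$ is strictly increasing for $\prec$, and $\beta_c$ is the Komornik--Loreti constant, characterised by $\alpha(\beta_c)$ being the shifted Thue--Morse word $\tau=(\tau_i)_{i\ge1}=1101001100101101\ldots$; hence $\beta<\beta_c\iff\alpha(\beta)\prec\tau$ and $\beta>\beta_c\iff\alpha(\beta)\succ\tau$. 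The three assertions thus reduce to: (1) $\alpha(\beta)\prec\tau\Rightarrow V_\beta$ countable; (2) $\alpha(\beta)=\tau\Rightarrow V_\beta$ uncountable with $h_{\mathrm{top}}(V_\beta)=0$; (3) $\alpha(\beta)\succ\tau\Rightarrow h_{\mathrm{top}}(V_\beta)>0$.

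For (3), let $n_0$ be the first index where $\alpha(\beta)$ disagrees with $\tau$; then $\alpha_{n_0}=1>\tau_{n_0}=0$ and $\alpha(\beta)$ begins with the word $w:=\tau_1\cdots\tau_{n_0-1}1$. I would observe that $(\epsilon_i)$ already lies in $V_\beta$ as soon as it satisfies the finitary constraint that, for every $n$, the block $\epsilon_{n+1}\cdots\epsilon_{n+n_0}$ is lexicographically $\prec w$ whenever $\epsilon_n=0$ and $\succ\overline{w}$ whenever $\epsilon_n=1$ (together with a suitably chosen fixed prefix to cover the $k=0$ conditions): a block $\prec w$ following a $0$ forces $(\epsilon_{n+1},\epsilon_{n+2},\ldots)\prec w\,0^\infty\preccurlyeq\alpha(\beta)$, and a block $\succ\overline{w}$ following a $1$ forces $(\overline{\epsilon_{n+1}},\overline{\epsilon_{n+2}},\ldots)\prec w\,0^\infty\preccurlyeq\alpha(\beta)$. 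This describes a subshift of finite type $S_\beta\subseteq V_\beta$, and the crux is to show $h_{\mathrm{top}}(S_\beta)>0$; a naive cardinality count does not suffice (the forbidden words have length $n_0+1$), so one must exhibit an explicit positive-entropy sub-structure, exploiting the self-similar, ``balanced'' nature of the Thue--Morse prefix $\tau_1\cdots\tau_{n_0-1}$. Granting this, $\dim_H U_{\beta,1}\ge h_{\mathrm{top}}(S_\beta)/\log\beta>0$ for $\beta\in(\beta_c,2)$, and $\beta=2$ is immediate.

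For (2), take $\alpha(\beta)=\tau$. That $\tau$ itself belongs to $V_{\beta_c}$ is the Komornik--Loreti theorem ($1$ has the unique expansion $\tau$ at $\beta_c$); because $\tau$ sits at the \emph{exact} threshold rather than strictly above it, one can further make a free binary choice inside each successive dyadic block of the Thue--Morse substitution hierarchy while still meeting every lexicographic requirement, which embeds a Cantor set into $V_{\beta_c}$ and so $|U_{\beta_c,1}|=2^{\aleph_0}$. On the other hand the number of distinct length-$k$ factors occurring in strings of $V_{\beta_c}$ grows only polynomially in $k$ (it inherits the linear factor complexity of the Thue--Morse subshift), so $h_{\mathrm{top}}(V_{\beta_c})=0$ and hence $\dim_H U_{\beta_c,1}=0$.

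Claim (1) is where the substantial work lies. Here $\alpha(\beta)\prec\tau$; first, $U_{\beta,1}$ is infinite for every $\beta>\tfrac{1+\sqrt5}{2}=\mathcal G(1)$, since the points of Proposition~\ref{Odd prop 2} (with $k=0$) are univoque and have infinitely many univoque $T_{\beta,0}$-preimages, so it remains to show $|V_\beta|\le\aleph_0$. The plan is to run the period-doubling cascade of parameters $\mathcal G(1)=\beta^{(0)}<\beta^{(1)}<\beta^{(2)}<\cdots\uparrow\beta_c$ at which $\alpha(\cdot)$ crosses the successive finite approximants of $\tau$, and to prove, by induction on the window $(\beta^{(j)},\beta^{(j+1)}]$ containing $\beta$, that every string of $V_\beta$ is eventually periodic: each tail of an admissible string is driven, after boundedly many shifts, into one of finitely many periodic orbits determined by $\alpha(\beta)$ --- equivalently, $W_{\beta,1}$ is carried on the boundary of a finite union of $2^j$ explicitly describable intervals together with their $T_{\beta,i}$-preimages. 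Since there are only countably many eventually periodic strings, this gives $|V_\beta|=\aleph_0$. The main obstacle, common to (1) and (3), is precisely this Thue--Morse bookkeeping --- locating the admissibility threshold $\alpha(\beta)$ against the self-similar scaffolding of $\tau$ --- which is what both collapses the dynamics onto finitely many periodic orbits below $\beta_c$ and, above $\beta_c$, forces $S_\beta$ to carry positive entropy.
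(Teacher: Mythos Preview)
The paper does not prove Theorem~\ref{GlenSid theorem 1} at all: it is quoted verbatim from Glendinning--Sidorov \cite{GlenSid} as background for the $m\ge 2$ generalisation (Theorems~\ref{Uniqueness theorem} and~\ref{Attractor thm}). So there is no ``paper's own proof'' to compare against; the only question is whether your outline stands on its own as a reconstruction of the Glendinning--Sidorov argument.

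Your framework is the right one and matches \cite{GlenSid}: pass to the lexicographic subshift $V_\beta$, use monotonicity of $\alpha(\beta)$, locate $\beta_c$ via the Thue--Morse word, and then split into the three regimes. But as written this is a plan with three substantial gaps you yourself flag. In~(3) you reduce to ``the crux is to show $h_{\mathrm{top}}(S_\beta)>0$'' and then write ``Granting this''; that step \emph{is} the content of~(3), and the Thue--Morse combinatorics needed to produce two admissible words that can be freely concatenated is not supplied. In~(2) the uncountability claim (``one can make a free binary choice inside each successive dyadic block'') is asserted rather than verified; one really must check that each such choice preserves all the two-sided lexicographic constraints against $\tau$, which is exactly the delicate ``threshold'' computation in \cite{GlenSid}. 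In~(1) you correctly identify the period-doubling cascade $\beta^{(j)}\uparrow\beta_c$ as the organising structure, but the inductive step --- that on each window every admissible tail is funnelled into one of finitely many periodic orbits --- is not carried out, and this is by far the longest part of the Glendinning--Sidorov paper.

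A minor point: your claim that the coding map is bi-Lipschitz is not literally correct (it is bi-H\"older with exponent $\log\beta/\log 2$), though your dimension formula $\dim_H U_{\beta,1}=h_{\mathrm{top}}(V_\beta)/\log\beta$ survives. Also, the argument ``a block $\prec w$ following a $0$ forces $(\epsilon_{n+1},\ldots)\prec\alpha(\beta)$'' handles one shift, but membership in $V_\beta$ requires the inequality at \emph{every} shift, including positions not immediately preceded by a fixed digit; you need the SFT condition to propagate, which it does, but this deserves a sentence.
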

\begin{thm}
\label{GlenSid theorem 2}
The set $W_{\beta,1}$ satisfies the following:
\begin{enumerate}
\item $\left|W_{\beta,1}\right|=2$ for $\beta\in(\frac{1+\sqrt{5}}{2},\beta_{f}],$  where $\beta_{f}$ is the root of the equation $$x^{3}-2x^{2}+x-1=0,\textrm{ } \beta_{f}=1.75487\ldots$$
\item $\left|W_{\beta,1}\right|=\aleph_{0}$ for $\beta\in(\beta_{f},\beta_{c})$
\item $\left|W_{\beta,1}\right|=2^{\aleph_{0}}$ for $\beta=\beta_{c}$
\item $W_{\beta,1}$ is a set of positive Hausdorff dimension for $\beta\in(\beta_{c},2].$
\end{enumerate}
\end{thm}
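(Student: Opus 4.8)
The plan is to translate the statement into symbolic dynamics. Recall the standard lexicographic characterisation of unique expansions: writing $\gamma=\gamma(\beta)=\gamma_{1}\gamma_{2}\cdots$ for the quasi-greedy expansion of $1$ in base $\beta$, $\sigma$ for the shift, and $\overline{w}$ for the bitwise complement of a word $w$, a sequence $\epsilon\in\{0,1\}^{\mathbb{N}}$ is the unique $\beta$-expansion of $\sum_{i}\epsilon_{i}\beta^{-i}$ if and only if $\sigma^{n}(\epsilon)\prec\gamma$ whenever $\epsilon_{n}=0$ and $\sigma^{n}(\overline{\epsilon})\prec\gamma$ whenever $\epsilon_{n}=1$. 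In the dynamical language of Section 2 this merely says that no iterate of $x$ under the maps $T_{\beta,0},T_{\beta,1}$ ever lands in the interior of the switch region. Using the reflection $x\mapsto\frac{1}{\beta-1}-x$, which interchanges $T_{\beta,0}$ and $T_{\beta,1}$ and acts by complementation on expansions, one sees that $x\in W_{\beta,1}$ precisely when $\epsilon=\epsilon(x)$ satisfies these conditions together with $\epsilon\prec\gamma$ and $\overline{\epsilon}\prec\gamma$; call such an $\epsilon$ a \emph{core univoque sequence}. Since $\beta\mapsto\gamma(\beta)$ is strictly increasing in the lexicographic order, the set of core univoque sequences grows with $\beta$, which is the source of the thresholds.

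Parts (3) and (4) then come almost for free from Theorem \ref{GlenSid theorem 1}. As recorded in the excerpt, every $x\in U_{\beta,1}$ is the image of an element of $W_{\beta,1}$ under a finite composition of the inverse branches $y\mapsto\tfrac{y+i}{\beta}$, each of which is a similarity; together with the inclusion $W_{\beta,1}\subseteq U_{\beta,1}$ this gives $|U_{\beta,1}|=\max(|W_{\beta,1}|,\aleph_{0})$ and $\dim_{H}U_{\beta,1}=\dim_{H}W_{\beta,1}$. Hence $|W_{\beta_{c},1}|=2^{\aleph_{0}}$ and $\dim_{H}W_{\beta,1}>0$ on $(\beta_{c},2]$ are immediate from the corresponding facts about $U_{\beta,1}$ in Theorem \ref{GlenSid theorem 1}. (At $\beta=2$ one can also argue directly: the core interval is $(0,1)$, the dyadic rationals have two expansions and every other point has a unique one, so $W_{2,1}$ has Hausdorff dimension $1$.)

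For part (1) I would first observe that the two alternating sequences $(10)^{\infty}$ and $(01)^{\infty}$ --- the points $\frac{\beta}{\beta^{2}-1}$ and $\frac{1}{\beta^{2}-1}$ --- are core univoque for every $\beta\in(\frac{1+\sqrt{5}}{2},2]$: all of the required inequalities reduce to $(10)^{\infty}\prec\gamma$, which holds exactly because $\gamma\succ(10)^{\infty}$ once $\beta>\frac{1+\sqrt{5}}{2}$. This gives $|W_{\beta,1}|\ge 2$. The reverse inequality for $\beta\le\beta_{f}$ is the heart of the matter: I would show that when $\gamma\preceq(1100)^{\infty}$ a core univoque sequence can contain neither $11$ nor $00$, and therefore must be alternating. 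The mechanism is a short forced-digit argument: an occurrence of the block $11$ preceded by a $0$ forces $\gamma$ to begin with $1100$ and then, feeding the conditions $\sigma^{n}(\epsilon)\prec\gamma$ and $\sigma^{n}(\overline{\epsilon})\prec\gamma$ back in repeatedly, forces the tail of $\epsilon$ beyond that block to be exactly $(1100)^{\infty}$ --- which then violates $\sigma^{n}(\epsilon)\prec\gamma$ at the next $0$, because $(1100)^{\infty}\succeq\gamma$. An occurrence of $11$ at the very start of $\epsilon$ is excluded the same way after using $\epsilon\prec\gamma$ to force $\epsilon$ to begin with $110011$, and the block $00$ is handled by symmetry. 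It remains to identify $\beta_{f}$: running the greedy algorithm on $1$ at the root of $x^{3}-2x^{2}+x-1=0$ yields the finite greedy expansion $1101\,0^{\infty}$, so the quasi-greedy expansion is $(1100)^{\infty}$; by monotonicity of $\gamma(\beta)$, the condition $\gamma(\beta)\preceq(1100)^{\infty}$ is precisely $\beta\le\beta_{f}$.

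For part (2), when $\beta>\beta_{f}$ we have $\gamma\succ(1100)^{\infty}$, and a direct check shows that every sequence $(1100)^{n}(0011)^{\infty}$, $n\ge 0$, is core univoque (it lies in the core because $(1100)^{\infty}$ and $(0011)^{\infty}$ do); these yield infinitely many distinct points, so $|W_{\beta,1}|\ge\aleph_{0}$, while $|W_{\beta,1}|\le|U_{\beta,1}|=\aleph_{0}$ by Theorem \ref{GlenSid theorem 1}, giving equality on $(\beta_{f},\beta_{c})$. The main obstacle is the combinatorial bookkeeping in part (1) --- establishing that $11$ (or $00$) in a core univoque sequence is genuinely incompatible with $\gamma\preceq(1100)^{\infty}$, and thereby isolating $(1100)^{\infty}$, and hence $\beta_{f}$, as the exact borderline. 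Everything else (monotonicity of $\gamma(\beta)$, the univocity verifications for the explicit sequences, the greedy-algorithm computation, and the reduction of (3)--(4) to Theorem \ref{GlenSid theorem 1}) is routine.
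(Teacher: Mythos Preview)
The paper does not itself prove Theorem~\ref{GlenSid theorem 2}; it is quoted from Glendinning--Sidorov. What the paper \emph{does} prove is the general-$m$ analogue, Theorem~\ref{Attractor thm}, and your outline is essentially that argument specialised to $m=1$: the lexicographic description of $S_{\beta,m}$ (Lemma~\ref{Attractor lemma}), monotonicity in $\beta$ (Remark~\ref{Beta inclusion remark}), the reduction of (3)--(4) to Theorem~\ref{GlenSid theorem 1} via the fact that $U_{\beta,1}$ is a countable union of similar copies of $W_{\beta,1}$, and the forced-digit analysis for (1) all match. Your computation that the greedy expansion of $1$ at $\beta_f$ is $1101\,0^{\infty}$, hence the quasi-greedy expansion is $(1100)^{\infty}$, is correct.

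There is one genuine slip, in part~(2). The sequences $(1100)^{n}(0011)^{\infty}$ with $n\ge 1$ are \emph{not} core univoque for $\beta$ just above $\beta_f$. At the junction between the $(1100)^{n}$ block and the $(0011)^{\infty}$ tail you create four consecutive zeros; for $n=1$, say, $\sigma^{2}(\epsilon)=0000(1100)^{\infty}$. But for $\beta$ close to $\beta_f$ the quasi-greedy expansion $\gamma(\beta)$ still begins $1100\ldots$, so $\overline{\gamma(\beta)}$ begins $0011\ldots$, and then $\sigma^{2}(\epsilon)=0000\ldots\prec 0011\ldots=\overline{\gamma(\beta)}$, violating the lower inequality in Lemma~\ref{Attractor lemma}. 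So these sequences do not lie in $S_{\beta,1}$ and the corresponding points are not in $W_{\beta,1}$. A family that does work is $(10)^{j}(1100)^{\infty}$, $j\ge 0$: one checks directly that every shift of such a sequence lies lexicographically between $(0011)^{\infty}$ and $(1100)^{\infty}$, and both of these are strictly between $\overline{\gamma(\beta)}$ and $\gamma(\beta)$ once $\beta>\beta_f$. This is precisely the family the paper uses in the odd case of Theorem~\ref{Attractor thm}, namely $(k+1,k)^{j}(k+1,k+1,k,k)^{\infty}$, specialised to $k=0$. With this correction your argument for (2) goes through, and the rest of your plan is sound.
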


Here $\beta_{c}\approx 1.78723$ is the Komornik-Loreti constant introduced in \cite{KomLor}. It is the smallest value of $\beta$ for which $1\in U_{\beta,1}$. Moreover $\beta_{c}$ is the unique solution of the equation $$\sum_{i=1}^{\infty}\frac{\lambda_{i}}{\beta^{i}}=1,$$ where $(\lambda_{i})_{i=0}^{\infty}$ is the Thue-Morse sequence (see \cite{AllShall}), i.e. $\lambda_{0}=0$ and if $\lambda_{i}$ is already defined for some $i\geq 0$ then $\lambda_{2i}=\lambda_{i}$ and $\lambda_{2i+1}=1-\lambda_{i}.$ The sequence $(\lambda_{i})_{i=0}^{\infty}$ begins $$(\lambda_{i})_{i=0}^{\infty}=0110\textrm{ }1001\textrm{ }1001\textrm{ }0110\textrm{ }1001\textrm{ }\ldots.$$ In \cite{AllCos} it was shown that $\beta_{c}$ is transcendental. For $m\geq2$ we define the sequence $(\lambda_{i}(m))_{i=1}^{\infty}\in\{0,\ldots,m\}^{\mathbb{N}}$ as follows:
\[ \lambda_{i}(m) = \left\{ \begin{array}{ll}
  k+\lambda_{i}-\lambda_{i-1}  & \mbox{if $m=2k$}\\
        k+\lambda_{i}  & \mbox{if $m=2k+1$.}\end{array} \right. \]We define $\beta_{c}(m)$ to be the unique solution of $$\sum_{i=1}^{\infty}\frac{\lambda_{i}(m)}{\beta^{i}}=1.$$ In \cite{KomLor2} the authors proved that $\beta_{c}(m)$ is transcendental and the smallest value of $\beta$ for which $1\in U_{\beta,m}.$ In section 6 we include a table of values for $\beta_{c}(m)$. We begin our study of the sets $U_{\beta,m}$ and $W_{\beta,m}$ by showing that the following proposition holds.
\begin{prop}
\label{Countable proposition}
Let $m\geq 2,$ then $\left|U_{\beta,m}\right|\geq \aleph_{0}$ for $\beta\in(\mathcal{G}(m),m+1].$ 
\end{prop}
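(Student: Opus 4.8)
The plan is to exhibit a countably infinite family of points in $U_{\beta,m}$ for every $\beta\in(\mathcal{G}(m),m+1]$. The natural starting point is Propositions \ref{Even prop 2} and \ref{Odd prop 2}, which already produce at least one point with a unique $\beta$-expansion: in the even case $m=2k$ the point $\frac{k}{\beta-1}$, and in the odd case $m=2k+1$ the pair $\frac{k\beta+(k+1)}{\beta^{2}-1},\frac{(k+1)\beta+k}{\beta^{2}-1}$. Call such a point $x_{0}$. The key observation is that if $x_{0}$ has a unique $\beta$-expansion and $T_{\beta,i}(y)=x_{0}$ for exactly one admissible digit $i$ (equivalently, $y$ lies in the interior of the $i$-th fixed digit interval, or more precisely in no choice interval), then $y$ also has a unique $\beta$-expansion. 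So I would build a sequence $x_{0},x_{1},x_{2},\ldots$ of distinct points, each a preimage of the previous one under a single map $T_{\beta,i}$, lying outside every choice interval.

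Concretely, first I would recall the characterisation already established in the excerpt: $x$ maps into a choice interval under some finite sequence of maps if and only if $x$ fails to have a unique expansion (this is the content of Proposition \ref{Uncountable proposition} together with Lemma \ref{Bijection lemma}, read contrapositively for the "unique" direction, which the proofs of Propositions \ref{Even prop 2} and \ref{Odd prop 2} already use). Thus it suffices to construct infinitely many distinct points none of whose forward images ever lands in a choice interval. Since $x_{0}$ already has this property, I would look for a point $x_{1}\neq x_{0}$ with $T_{\beta,j}(x_{1})=x_{0}$ for some $j$, chosen so that $x_{1}$ still avoids all choice intervals; then iterate. The cleanest candidate is to take preimages along the "end" maps $T_{\beta,0}$ or $T_{\beta,m}$: since $0$ and $\frac{m}{\beta-1}$ are the fixed points of $T_{\beta,0}$ and $T_{\beta,m}$ and sit at the extreme ends of $I_{\beta,m}$, their neighbourhoods are safely inside the $0$-th and $m$-th fixed digit intervals for $\beta>\mathcal{G}(m)$. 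A robust choice: fix a small point $\delta>0$ with $T_{\beta,0}(\delta)=\beta\delta$; iterating $T_{\beta,0}^{-1}$ from $x_{0}$ (when $x_0$ itself is small enough, or after first using one $T_{\beta,m}$-preimage to move into the right range) produces a strictly monotone, hence infinite, sequence of distinct points, all in the interior of the $0$-th fixed digit interval $[0,\frac1\beta]$, provided one checks $x_0/\beta, x_0/\beta^2,\ldots$ (or the analogous iterates) stay below $\frac1\beta$ and outside the first choice interval $[\frac1\beta,\frac{(m-1)\beta+1}{\beta(\beta-1)}]$.

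So the main steps are: (1) restate the unique-expansion criterion in terms of never entering a choice interval; (2) take $x_{0}$ from Proposition \ref{Even prop 2} or \ref{Odd prop 2}; (3) observe that the full forward orbit of $x_0$ avoids all choice intervals, and that the same holds for any point whose forward orbit eventually coincides with a tail of the orbit of $x_0$; (4) produce infinitely many such points by taking iterated preimages under a single map (e.g. $T_{\beta,0}$ or $T_{\beta,m}$), verifying by a short computation that each preimage still lies in the relevant fixed digit interval and is distinct from all previous ones (distinctness is automatic from strict monotonicity of $T_{\beta,i}$ away from its fixed point); (5) conclude $|U_{\beta,m}|\geq\aleph_0$.

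The main obstacle I anticipate is step (4): ensuring that the chosen preimages genuinely stay in a fixed digit interval (and in particular outside the adjacent choice intervals) for the entire range $\beta\in(\mathcal{G}(m),m+1]$, including the endpoint $\beta=m+1$ where digit intervals degenerate. One has to pick the preimage map carefully — preimages under $T_{\beta,k}$ or $T_{\beta,k+1}$ near the central fixed point(s) may drift toward a choice interval, whereas preimages under $T_{\beta,0}$ contract toward $0$ and those under $T_{\beta,m}$ contract toward $\frac{m}{\beta-1}$, so the extreme maps are the safe ones; the only genuine check is that the first such preimage of $x_0$ lands below $\frac1\beta$ (resp. above $\frac{(m-1)\beta+1}{\beta(\beta-1)}$), which reduces to a single inequality in $\beta$ analogous to those appearing in the proofs of Propositions \ref{Even prop 2} and \ref{Odd prop 2}. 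Everything else is a routine monotonicity argument.
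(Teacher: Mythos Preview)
Your approach is essentially identical to the paper's: start from the special point $x_{0}$ supplied by Proposition~\ref{Even prop 2} (the point $\frac{k}{\beta-1}$ when $m=2k$) or Proposition~\ref{Odd prop 2} (the point $\frac{k\beta+(k+1)}{\beta^{2}-1}$ when $m=2k+1$), then take iterated preimages under $T_{\beta,0}$ and check they all lie below $\frac{1}{\beta}$, hence outside every choice interval. The paper carries out exactly this, noting that the single inequality you anticipate in step~(4) is $\frac{k}{\beta^{n}(\beta-1)}<\frac{1}{\beta}$, which for $n\geq 1$ reduces to $\beta>k+1=\mathcal{G}(m)$ in the even case; no auxiliary $T_{\beta,m}$-preimage is needed.
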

     
Combining Proposition \ref{Countable proposition} with the results presented in \cite{KoLiDe} the following analogue of Theorem \ref{GlenSid theorem 1} is immediate.  
\begin{thm}
\label{Uniqueness theorem}
For $m\geq 2$ the set $U_{\beta,m}$ satisfies the following:
\begin{enumerate}
\item $\left|U_{\beta,m}\right|=\aleph_{0}$ for $\beta\in(\mathcal{G}(m),\beta_{c}(m))$
\item $\left|U_{\beta,m}\right|=2^{\aleph_{0}}$ for $\beta=\beta_{c}(m)$
\item $U_{\beta,m}$ is a set of positive Hausdorff dimension for $\beta\in(\beta_{c}(m),m+1].$
\end{enumerate}

\begin{proof}[Proof of Proposition \ref{Countable proposition}]
To begin with let us assume $m=2k$ for some $k\in\mathbb{N},$ in this case $\mathcal{G}(m)=k+1.$ It is a simple exercise to show that for $\beta\in(k+1,m+1]$ 
\begin{equation}
\label{Never map equation}
T^{-n}_{\beta,0}\Big(\frac{k}{\beta-1}\Big)=\frac{k}{\beta^{n}(\beta-1)}<\frac{1}{\beta}
\end{equation} for all $n\in\mathbb{N}.$ By Proposition \ref{Even prop 2} we know that $\frac{k}{\beta-1}$ has a unique $\beta$-expansion. It follows from (\ref{Never map equation}) that $T^{-n}_{\beta,0}(\frac{k}{\beta-1})$ is never mapped into a choice interval and therefore has a unique $\beta$-expansion. As $n$ was arbitrary we can conclude our result. The case where $m=2k+1$ is proved similarly, in this case we can consider preimages of $\frac{k\beta+(k+1)}{\beta^{2}-1}.$
\end{proof} 
\end{thm}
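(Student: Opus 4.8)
The plan is to produce, for each fixed $\beta\in(\mathcal{G}(m),m+1]$, a countably infinite family of points of $(0,\frac{m}{\beta-1})$ each having a unique $\beta$-expansion, by starting from a single point already shown to lie in $U_{\beta,m}$ during the proof of Theorem \ref{Main thm} and taking its entire backward orbit under $T_{\beta,0}^{-1}$ (which simply multiplies by $\beta^{-1}$). The key mechanism is that once an iterate of $x$ lies strictly below $\frac{1}{\beta}$ it sits strictly inside the $0$-th fixed digit interval $[0,\frac1\beta]$, so the only admissible first digit is $0$; prepending $0$ then gives a bijection $\Sigma_{\beta,m}(x)\leftrightarrow\Sigma_{\beta,m}(\beta x)$, and uniqueness propagates from the known point down the backward orbit. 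As in Section 3, I would split into the even and odd cases.

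For $m=2k$ we have $\mathcal{G}(m)=k+1$, and by Proposition \ref{Even prop 2} the point $\frac{k}{\beta-1}$ lies in $U_{\beta,m}$. Put $x_{n}=\frac{k}{\beta^{n}(\beta-1)}$, so $T_{\beta,0}(x_{n})=x_{n-1}$ with $x_{0}=\frac{k}{\beta-1}$. The inequality $\beta>k+1$ is equivalent to $\frac{k}{\beta(\beta-1)}<\frac1\beta$, hence $x_{n}\le\frac{k}{\beta(\beta-1)}<\frac1\beta$ for every $n\ge1$; thus each $x_{n}$ with $n\ge1$ lies strictly inside $[0,\frac1\beta]$, the first digit is forced to be $0$, and $|\Sigma_{\beta,m}(x_{n})|=|\Sigma_{\beta,m}(x_{n-1})|$. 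By induction $|\Sigma_{\beta,m}(x_{n})|=|\Sigma_{\beta,m}(x_{0})|=1$ for all $n$. The $x_{n}$ are pairwise distinct (strictly decreasing to $0$) and all lie in $(0,\frac{m}{\beta-1})$, so $\{x_{n}:n\ge0\}$ exhibits $\aleph_{0}$ elements of $U_{\beta,m}$.

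The case $m=2k+1$ runs in exactly the same way, starting from $p=\frac{k\beta+(k+1)}{\beta^{2}-1}$, which lies in $U_{\beta,m}$ by Proposition \ref{Odd prop 2}, and its preimages $p_{n}=T_{\beta,0}^{-n}(p)=p\beta^{-n}$. Here one needs $p<1$, so that $p_{n}\le p\beta^{-1}<\frac1\beta$ for all $n\ge1$; the inequality $p<1$ rearranges to $0<\beta^{2}-k\beta-(k+2)$, i.e.\ $\beta>\frac{k+\sqrt{k^{2}+4k+8}}{2}$, and this bound is strictly smaller than $\mathcal{G}(m)=\frac{k+1+\sqrt{k^{2}+6k+5}}{2}$ (a comparison already made in the proof of Proposition \ref{Odd prop 2}), so it holds automatically for $\beta>\mathcal{G}(m)$. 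The forced-digit argument then applies verbatim, and the distinct points $\{p_{n}:n\ge0\}\subseteq(0,\frac{m}{\beta-1})$ give $\aleph_{0}$ elements of $U_{\beta,m}$.

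The only substantive step is the pair of elementary inequalities $\frac{k}{\beta(\beta-1)}<\frac1\beta$ and $p<1$: these are precisely where the hypothesis $\beta>\mathcal{G}(m)$ is used, each reducing to the quadratic formula, and they are what guarantee the backward orbit stays in the region where the digit $0$ is forced. A minor point to keep in mind is that one wants the preimages strictly below $\frac1\beta$ (not merely at the endpoint) so that genuinely no competing digit is available; this is automatic since for $n\ge1$ one already has $x_{n}\le\frac{k}{\beta(\beta-1)}<\frac1\beta$ (resp.\ $p_{n}\le p\beta^{-1}<\frac1\beta$). Beyond this bookkeeping I anticipate no real obstacle.
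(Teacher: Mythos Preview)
Your proposal is correct and follows essentially the same approach as the paper: take the point with unique expansion produced in Section~3 ($\frac{k}{\beta-1}$ for $m=2k$, $\frac{k\beta+(k+1)}{\beta^{2}-1}$ for $m=2k+1$), pull it back under $T_{\beta,0}^{-1}$, and verify that the preimages stay strictly below $\frac{1}{\beta}$ so the first digit is forced to be $0$. Your write-up is slightly more detailed (you spell out the bijection $\Sigma_{\beta,m}(x_{n})\leftrightarrow\Sigma_{\beta,m}(x_{n-1})$ and explicitly derive the inequality $p<1$ in the odd case by reducing it to $\beta^{2}-k\beta-(k+2)>0$, which the paper already established in Proposition~\ref{Odd prop 2}), but the underlying argument is identical.
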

We also show that the following analogue of Theorem \ref{GlenSid theorem 2} holds.
\begin{thm}
\label{Attractor thm}
If $m=2k$ the set $W_{\beta,m}$ satisfies the following:
\begin{enumerate}
\item $\left|W_{\beta,m}\right|=1$ for $\beta\in(\mathcal{G}(m),\beta_{f}(m)],$  where $\beta_{f}(m)$ is the root of the equation $$x^2-(k+1)x-k=0,\textrm{ } \beta_{f}(m)=\frac{k+1+\sqrt{k^{2}+6k+1}}{2}$$
\item $\left|W_{\beta,m}\right|=\aleph_{0}$ for $\beta\in(\beta_{f}(m),\beta_{c}(m))$
\item $\left|W_{\beta,m}\right|=2^{\aleph_{0}}$ for $\beta=\beta_{c}(m)$
\item $W_{\beta,m}$ is a set of positive Hausdorff dimension for $\beta\in(\beta_{c}(m),m+1].$
\end{enumerate}
If $m=2k+1$ the set $W_{\beta,m}$ satisfies the following:
\begin{enumerate}
\item $\left|W_{\beta,m}\right|=2$ for $\beta\in(\mathcal{G}(m),\beta_{f}(m)],$  where $\beta_{f}(m)$ is the root of the equation $$x^3-(k+2)x^{2}+x-(k+1)=0$$
\item $\left|W_{\beta,m}\right|=\aleph_{0}$ for $\beta\in(\beta_{f}(m),\beta_{c}(m))$
\item $\left|W_{\beta,m}\right|=2^{\aleph_{0}}$ for $\beta=\beta_{c}(m)$
\item $W_{\beta,m}$ is a set of positive Hausdorff dimension for $\beta\in(\beta_{c}(m),m+1].$
\end{enumerate}
\end{thm}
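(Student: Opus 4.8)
The plan is to reduce the study of $W_{\beta,m}$ to the dynamics of the maps $T_{\beta,i}$ restricted to the interval $(\frac{m+1-\beta}{\beta-1},1)$, exactly as in the proof of Theorem \ref{Main thm}. The first step is to identify which points of $(\frac{m+1-\beta}{\beta-1},1)$ have a unique $\beta$-expansion: by Lemma \ref{Bijection lemma} and Proposition \ref{Uncountable proposition}, $x\in W_{\beta,m}$ precisely when no finite sequence of maps in $\Omega_{\beta,m}(x)$ carries $x$ into the interior of a choice interval. For $\beta\in(\mathcal{G}(m),m+1]$ the switch region $[\frac1\beta,\frac{(m-1)\beta+1}{\beta(\beta-1)}]$ splits into fixed digit intervals and choice intervals, and the only way a point can avoid all choice intervals forever is to stay inside the union of the fixed digit intervals under every admissible iterate. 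In the even case $m=2k$ this forces the orbit to land on the fixed point $\frac{k}{\beta-1}$ of $T_{\beta,k}$ (compare Proposition \ref{Even prop 2}); in the odd case $m=2k+1$ it forces the orbit into the $2$-cycle $\{\frac{k\beta+(k+1)}{\beta^2-1},\frac{(k+1)\beta+k}{\beta^2-1}\}$ of $T_{\beta,k}\circ T_{\beta,k+1}$ (compare Proposition \ref{Odd prop 2}). This already gives the lower bounds $|W_{\beta,m}|\ge 1$ and $|W_{\beta,m}|\ge 2$ respectively whenever $\beta>\mathcal{G}(m)$.

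The second step is the first claim in each list, namely that for $\beta$ just above $\mathcal{G}(m)$ these are the \emph{only} unique expansions in $W_{\beta,m}$. Here I would argue that any other candidate point must, after finitely many $T_{\beta,i}$-steps, re-enter the switch region and land in a fixed digit interval; the expansion factor $\beta$ of Lemma \ref{Map properties} together with the geometry computed in Lemma \ref{jump centre} and Corollary \ref{Increasing cor} forces the orbit to escape into a choice interval unless it is pinned to the central fixed point or $2$-cycle. The precise threshold $\beta_f(m)$ is where a \emph{new} periodic orbit avoiding all choice intervals is first born: one writes down the candidate periodic point of the relevant word (for $m=2k$ the word giving $x^2-(k+1)x-k=0$; for $m=2k+1$ the word giving $x^3-(k+2)x^2+x-(k+1)=0$) and checks that the conditions ``this point lies in the correct fixed digit interval and never enters a choice interval'' are equivalent to $\beta\ge\beta_f(m)$, by a sequence of inequality manipulations of exactly the type performed in Propositions \ref{Even prop 2} and \ref{Odd prop 2}. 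For $\beta\le\beta_f(m)$ one shows conversely that no such orbit fits, so $W_{\beta,m}$ consists of only the central fixed point (resp.\ $2$-cycle).

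The third step handles the remaining three items, and here I would invoke the results of \cite{KoLiDe} on the set $U_{\beta,m}$ (the same reference used for Theorem \ref{Uniqueness theorem}) transported to $W_{\beta,m}$ via the preimage relationship noted after the definition of $W_{\beta,m}$. Between $\beta_f(m)$ and the Komornik--Loreti constant $\beta_c(m)$ one gets successively more admissible periodic (and pre-periodic) orbits avoiding choice intervals, each contributing countably many points but no more, so $|W_{\beta,m}|=\aleph_0$; at $\beta=\beta_c(m)$ the Thue--Morse-type sequence $(\lambda_i(m))$ becomes admissible and a Cantor-like set of non-eventually-periodic unique expansions appears, giving $2^{\aleph_0}$; and for $\beta>\beta_c(m)$ a subshift of positive entropy sits inside the set of unique expansions, giving positive Hausdorff dimension. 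I expect the main obstacle to be the exact determination of $\beta_f(m)$: one must verify that the specific finite word producing the stated polynomial really is the ``first'' obstruction, i.e.\ that no shorter word or other word of the same length yields a surviving orbit at a smaller $\beta$. This amounts to a careful but finite case analysis of which concatenations of $T_{\beta,i}$'s keep an orbit inside the fixed digit intervals, using the monotonicity and scaling properties of Lemma \ref{Map properties} and the explicit interval endpoints from Section 2; everything else follows from the cited literature plus the bijection of Lemma \ref{Bijection lemma}.
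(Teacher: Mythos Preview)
Your overall strategy is reasonable, but it diverges from the paper's approach at the crucial first step, and your proposal leaves that step as a promissory note rather than an argument.

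The paper does \emph{not} analyse $W_{\beta,m}$ dynamically via orbits avoiding choice intervals. Instead it passes to the symbolic set
\[
S_{\beta,m}=\Big\{(\epsilon_i)\in\{0,\ldots,m\}^{\mathbb N}:\sum_i\epsilon_i\beta^{-i}\in W_{\beta,m}\Big\}
\]
and invokes Parry's lexicographic characterisation (Lemma \ref{Attractor lemma}): $(\epsilon_i)\in S_{\beta,m}$ iff every tail lies strictly between $(\bar d_i(m))$ and $(d_i(m))$, where $(d_i(m))$ is the quasi-greedy expansion of $1$. Two facts then do all the work. First, by Remark \ref{Beta inclusion remark} the sets $S_{\beta,m}$ are monotone in $\beta$, so to bound $|S_{\beta,m}|$ on the whole interval $(\mathcal G(m),\beta_f(m)]$ it suffices to check the single value $\beta=\beta_f(m)$. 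Second, at $\beta=\beta_f(m)$ the quasi-greedy expansion of $1$ is explicit: $(k+1,k-1)^\infty$ when $m=2k$ and $(k+1,k+1,k,k)^\infty$ when $m=2k+1$. With these in hand the combinatorics is a few lines: e.g.\ for $m=2k$, a digit $k+1$ forces the next to be $k-1$ and vice versa, so any sequence containing a digit $\neq k$ must tail in $(k+1,k-1)^\infty$ or $(k-1,k+1)^\infty$, both excluded by the strict inequalities, leaving only $(k)^\infty$. For $\beta>\beta_f(m)$ the paper simply exhibits countably many admissible sequences (e.g.\ $(k)^j(k+1,k-1)^\infty$); items 2--4 then follow from Theorem \ref{Uniqueness theorem} and \cite{KoLiDe} exactly as you say.

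Your dynamical route could in principle be pushed through, but it is strictly harder: the ``finite case analysis'' you anticipate is precisely what the Parry machinery and the monotonicity of $S_{\beta,m}$ bypass. In particular, you never identify the words $(k+1,k-1)$ and $(k+1,k+1,k,k)$ whose appearance as the quasi-greedy expansion of $1$ \emph{defines} $\beta_f(m)$; without that identification you cannot pin down $\beta_f(m)$ as the exact threshold rather than merely an upper bound for where a new orbit is born. One further small point: your opening characterisation slightly overstates what Lemma \ref{Bijection lemma} and Proposition \ref{Uncountable proposition} give you --- Proposition \ref{Uncountable proposition} is a sufficient condition for uncountability, not an if-and-only-if criterion for uniqueness; the correct criterion is that the (necessarily unique) forward orbit of $x$ never meets any choice interval.
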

\begin{remark}
$\beta_{f}(m)$ is a Pisot number for all $m\in\mathbb{N}.$
\end{remark}
Using Theorem \ref{Uniqueness theorem}, to prove Theorem \ref{Attractor thm} it suffices to show that statement $1$ holds in both the odd and even cases and $\left|W_{\beta,m}\right|\geq \aleph_{0}$ for $\beta>\beta_{f}(m)$ in both the odd and even cases. In section 6 we include a table of values for $\beta_{f}(m).$

\subsection{Proof of Theorem \ref{Attractor thm}}
The proof of Theorem \ref{Attractor thm} is more involved than Theorem \ref{Uniqueness theorem} and as we will see requires more technical results. The following is taken from \cite{KoLiDe}. Firstly let us define the lexicographic order on $\{0,\ldots,m\}^{\mathbb{N}},$ we say that $(x_{i})_{i=1}^{\infty}<(y_{i})_{i=1}^{\infty}$ with respect to the lexicographic order if there exists $n\in\mathbb{N}$ such that $x_{i}=y_{i}$ for all $i<n$ and $x_{n}<y_{n}$ or if $x_{1}<y_{1}$. For a sequence $(x_{i})_{i=1}^{\infty}\in\{0,\ldots,m\}^{\mathbb{N}}$ we define $(\bar{x}_{i})_{i=1}^{\infty}=(m-x_{i})_{i=1}^{\infty}.$ We also adopt the notation $(\epsilon_{1},\ldots,\epsilon_{j})^{\infty}$ to denote the element of $\{0,\ldots,m\}^{\mathbb{N}}$ obtained by the infinite concatenation of the finite sequence $(\epsilon_{1},\ldots,\epsilon_{j}).$ Let the sequence $(d_{i}(m))_{i=1}^{\infty}\in\{0,\ldots,m\}^{\mathbb{N}}$ be defined as follows: let $d_{1}(m)$ be the largest element of $\{0,\ldots,m\}$ such that $\frac{d_{1}(m)}{\beta}<1,$ and if $d_{i}(m)$ is defined for $i<n$ then $d_{n}(m)$ is defined to be the largest element of $\{0,\ldots,m\}$ such that $\sum_{i=1}^{n}\frac{d_{i}(m)}{\beta^i}<1.$ The sequence $(d_{i}(m))_{i=1}^{\infty}$ is called the quasi-greedy expansion of $1$ with respect to $\beta$; it is trivially a $\beta$-expansion for $1$ and the largest infinite $\beta$-expansion of $1$ with respect to the lexicographic order not ending with $(0)^{\infty}$. We let $$S_{\beta,m}=\Big\{(\epsilon_{i})_{i=1}^{\infty}\in\{0,\ldots,m\}^{\mathbb{N}}: \sum_{i=1}^{\infty}\frac{\epsilon_{i}}{\beta^{i}}\in W_{\beta,m}\Big\},$$ it follows from the definition of $W_{\beta,m}$ that $\left|W_{\beta,m}\right|=\left|S_{\beta,m}\right|$ and to prove Theorem \ref{Attractor thm} it suffices to show that equivalent statements hold for $S_{\beta,m}.$ The following lemma which is essentially due to Parry \cite{Parry} provides a useful characterisation of $S_{\beta,m}.$

\begin{lem}
\label{Attractor lemma}
\begin{align*}
S_{\beta,m}=\Big\{&(\epsilon_{i})_{i=1}^{\infty}\in\{0,\ldots,m\}^{\mathbb{N}}: (\epsilon_{i},\epsilon_{i+1},\ldots)<(d_{1,m},d_{2,m},\ldots) \textrm{ and }\\ &(\bar{d}_{1,m},\bar{d}_{2,m},\ldots)<(\epsilon_{i},\epsilon_{i+1},\ldots) \textrm{ for all } i\in\mathbb{N}\Big\}
\end{align*}
\end{lem}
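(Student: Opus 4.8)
### Proof Proposal for Lemma \ref{Attractor lemma}

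The plan is to characterise membership in $S_{\beta,m}$ by translating the defining condition of $W_{\beta,m}$ — namely that $x\in(\frac{m+1-\beta}{\beta-1},1)$ has a unique $\beta$-expansion — into a lexicographic condition on the digit sequence, using the dynamical (shift) reformulation already available via Lemma \ref{Bijection lemma}. First I would recall the standard fact (going back to Parry, and used throughout the $\beta$-expansion literature) that a sequence $(\epsilon_i)_{i=1}^\infty\in\{0,\ldots,m\}^{\mathbb N}$ is the \emph{greedy} expansion of its value $x=\sum \epsilon_i\beta^{-i}$ if and only if every shift $(\epsilon_i,\epsilon_{i+1},\ldots)$ is lexicographically strictly less than $(d_1(m),d_2(m),\ldots)$, the quasi-greedy expansion of $1$; dually, a sequence is the \emph{lazy} expansion if and only if every shift strictly exceeds $(\bar d_1(m),\bar d_2(m),\ldots)$, where the bar denotes the reflection $x\mapsto m-x$. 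The key observation is then: $x$ has a unique $\beta$-expansion precisely when its greedy and lazy expansions coincide, which happens exactly when the unique candidate sequence simultaneously satisfies both one-sided lexicographic inequalities. So the content of the lemma is that these two conditions together are necessary and sufficient for uniqueness, provided we also restrict to $x$ in the open interval $(\frac{m+1-\beta}{\beta-1},1)$.

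The steps I would carry out, in order, are: (1) State and justify (or cite from \cite{Parry}, \cite{KoLiDe}) the greedy characterisation: the greedy expansion of $x\in[0,\frac{m}{\beta-1}]$ is obtained by the greedy algorithm, and a sequence not ending in $(0)^\infty$ is greedy iff all its shifts are $<(d_i(m))_{i=1}^\infty$; similarly the lazy characterisation via reflection symmetry $x\mapsto \frac{m}{\beta-1}-x$, which sends greedy expansions to lazy ones and sends $(d_i(m))$ to $(\bar d_i(m))$. (2) Argue that $x$ has a unique expansion iff greedy$(x)=$ lazy$(x)$, hence iff the common sequence $(\epsilon_i)$ satisfies both families of inequalities for all $i\in\mathbb N$; conversely if a sequence satisfies both families, it is simultaneously the greedy and the lazy expansion of its value, so that value is unique. (3) Check the interval condition: show that membership of $x$ in $(\frac{m+1-\beta}{\beta-1},1)$ corresponds exactly to the first digit $\epsilon_1$ of its expansion lying strictly between the extremes — equivalently that the two-sided lexicographic condition on $(\epsilon_i,\epsilon_{i+1},\ldots)$ \emph{at $i=1$} already forces $x\in(\frac{m+1-\beta}{\beta-1},1)$, and vice versa. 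Here one uses that $\sum_{i=1}^\infty \bar d_i(m)\beta^{-i} = \frac{m}{\beta-1}-1$ and $\sum_{i=1}^\infty d_i(m)\beta^{-i}=1$, so the strict inequalities at $i=1$ translate to $\frac{m}{\beta-1}-1 < x$ and $x<1$, i.e. $\frac{m+1-\beta}{\beta-1}<x<1$. (4) Assemble: the set $S_{\beta,m}$, being the set of expansions of points of $W_{\beta,m}$, is exactly the set of sequences satisfying the two-sided shift condition for all $i$, which is the claimed description.

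I expect the main obstacle to be handling the boundary/equality cases carefully and the distinction between greedy expansions that do or do not terminate in $(0)^\infty$ (equivalently between $(d_i(m))$ as the genuine greedy expansion of $1$ versus its quasi-greedy version). One must be careful that "unique expansion" is equivalent to "greedy $=$ lazy" including the degenerate endpoints, and that using the quasi-greedy sequence $(d_i(m))$ with \emph{strict} inequality correctly captures all shifts — this is precisely why the quasi-greedy (rather than greedy) expansion of $1$ appears. The reflection argument in step (1)–(2) needs the symmetry $T_{\beta,i}\leftrightarrow T_{\beta,m-i}$ under $x\mapsto\frac{m}{\beta-1}-x$, which follows from Lemma \ref{Map properties}; once that symmetry is in hand the lazy side is formally identical to the greedy side. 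The remaining verifications (step (3), the interval identities) are routine geometric-series computations. Since these characterisations are classical and the lemma is explicitly attributed to Parry, I would keep the proof short: cite \cite{Parry} and \cite{KoLiDe} for the greedy/lazy lexicographic characterisations and uniqueness criterion, and supply only the short computation identifying the interval $(\frac{m+1-\beta}{\beta-1},1)$ with the $i=1$ instance of the two-sided condition.
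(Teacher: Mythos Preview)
The paper does not give its own proof of this lemma: it simply states the result, attributes it as ``essentially due to Parry \cite{Parry}'', and moves on. Your proposal is correct and is precisely the standard Parry-type argument one would supply here --- the greedy/lazy lexicographic characterisation together with the reflection symmetry $x\mapsto\tfrac{m}{\beta-1}-x$, and the short geometric-series check identifying the interval $\bigl(\tfrac{m+1-\beta}{\beta-1},1\bigr)$ with the $i=1$ instance of the two-sided inequality --- so there is nothing to compare beyond noting that you have written out what the paper leaves to citation.
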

\begin{remark}
\label{Beta inclusion remark}
If $\beta< \beta'$ then the quasi-greedy expansion of $1$ with respect to $\beta$ is lexicographically strictly less than the quasi-greedy expansion of $1$ with respect to $\beta'.$ As a corollary of this we have $S_{\beta,m}\subseteq S_{\beta',m}$ for $\beta<\beta'.$  
\end{remark}
\begin{prop}
For $\beta\in(\mathcal{G}(m),\beta_{f}(m)]$ $\left|S_{\beta,m}\right|=1$ when $m$ is even, $\left|S_{\beta,m}\right|=2$ when $m$ is odd and $\left|S_{\beta,m}\right|\geq \aleph_{0}$ for $\beta\in(\beta_{f}(m),m+1].$
\end{prop}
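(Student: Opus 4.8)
The plan is to work entirely on the symbolic side, using Lemma~\ref{Attractor lemma} to describe $S_{\beta,m}$ and the monotonicity $S_{\beta,m}\subseteq S_{\beta',m}$ for $\beta<\beta'$ of Remark~\ref{Beta inclusion remark}; this suffices since $|W_{\beta,m}|=|S_{\beta,m}|$. Everything rests on one computation: the quasi-greedy expansion $(d_i(m))_{i=1}^{\infty}$ of $1$ with respect to $\beta=\beta_f(m)$. Running the greedy algorithm for $1$ and substituting the defining polynomial of $\beta_f(m)$ shows that the greedy expansion of $1$ terminates, equalling $(k+1,k,0,0,\ldots)$ when $m=2k$ (using $\beta_f(m)^2=(k+1)\beta_f(m)+k$) and $(k+1,k+1,k,k+1,0,0,\ldots)$ when $m=2k+1$ (using $\beta_f(m)^3=(k+2)\beta_f(m)^2-\beta_f(m)+(k+1)$). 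Converting a terminating greedy expansion $(t_1,\ldots,t_n,0,0,\ldots)$ with $t_n\geq 1$ into the quasi-greedy expansion $(t_1,\ldots,t_{n-1},t_n-1)^{\infty}$, we obtain $(d_i(m))_{i=1}^{\infty}=(k+1,k-1)^{\infty}$ when $m=2k$ and $(d_i(m))_{i=1}^{\infty}=(k+1,k+1,k,k)^{\infty}$ when $m=2k+1$; the reflected sequences $(\bar d_i(m))_{i=1}^{\infty}$ are then $(k-1,k+1)^{\infty}$ and $(k,k,k+1,k+1)^{\infty}$ respectively.

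Next I would pin down $S_{\beta_f(m),m}$ exactly. By Lemma~\ref{Attractor lemma} every element is a sequence $(\epsilon_i)$ all of whose shifts lie strictly between $(\bar d_i(m))$ and $(d_i(m))$, and comparing first coordinates already forces $\epsilon_i\in\{k-1,k,k+1\}$ in the even case and $\epsilon_i\in\{k,k+1\}$ in the odd case for every $i$. In the even case, $\epsilon_i=k+1$ would force the shift at $i$ to be $<(k+1,k-1)^{\infty}$ and hence the shift at $i+1$ to be $<(k-1,k+1)^{\infty}=(\bar d_i(m))$, contradicting Lemma~\ref{Attractor lemma}; symmetrically $\epsilon_i=k-1$ is impossible, so $\epsilon_i\equiv k$ and $S_{\beta_f(m),m}=\{(k)^{\infty}\}$. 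In the odd case the same argument, applied to two coordinates at a time, shows that a block $\epsilon_i\epsilon_{i+1}=(k+1)(k+1)$ forces the shift at $i+2$ below $(k,k,k+1,k+1)^{\infty}=(\bar d_i(m))$ while a block $(k)(k)$ forces it above $(k+1,k+1,k,k)^{\infty}=(d_i(m))$, both impossible, so consecutive digits alternate and $S_{\beta_f(m),m}=\{(k,k+1)^{\infty},(k+1,k)^{\infty}\}$. This determination is the technical heart of the argument.

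Granting this, the case $\beta\in(\mathcal{G}(m),\beta_f(m)]$ follows quickly. The upper bounds $|S_{\beta,m}|\leq 1$ (even) and $|S_{\beta,m}|\leq 2$ (odd) are immediate from Remark~\ref{Beta inclusion remark}, since $S_{\beta,m}\subseteq S_{\beta_f(m),m}$. For the matching lower bounds I would verify via Lemma~\ref{Attractor lemma} that $(k)^{\infty}$ (even), resp.\ $(k,k+1)^{\infty}$ and $(k+1,k)^{\infty}$ (odd), lie in $S_{\beta,m}$ for every $\beta>\mathcal{G}(m)$, using only that $d_1(m)\geq k+1$ (since $\beta>k+1$), that $d_2(m)\geq k+1$ whenever $d_1(m)=k+1$ (which in the odd case is precisely the inequality $\beta^2-(k+1)\beta-(k+1)>0$, i.e.\ $\beta>\mathcal{G}(m)$), and the reflection identity $x<y\iff\bar x>\bar y$. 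Alternatively, in the odd case these two sequences are the $\beta$-expansions of the points $\frac{k\beta+(k+1)}{\beta^2-1}$ and $\frac{(k+1)\beta+k}{\beta^2-1}$ of Proposition~\ref{Odd prop 2}; a short computation shows both lie in $\left(\frac{m+1-\beta}{\beta-1},1\right)$ precisely when $\beta>\mathcal{G}(m)$, so they lie in $W_{\beta,m}$.

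Finally, for $\beta\in(\beta_f(m),m+1]$ I would produce a countably infinite subfamily of $S_{\beta,m}$: the sequences obtained by prefixing $j$ copies of the digit $k$ to $(k+1,k-1)^{\infty}$ when $m=2k$, and the sequences obtained by prefixing $j$ copies of the block $(k+1,k)$ to $(k+1,k+1,k,k)^{\infty}$ when $m=2k+1$, for $j=0,1,2,\ldots$. These are pairwise distinct, and by Lemma~\ref{Attractor lemma} membership amounts to comparing every shift with $(d_i(m))$ and $(\bar d_i(m))$. Each shift either begins with a digit $\leq k$ — whence it is trivially below $(d_i(m))$, and, using $d_1(m)\geq k+1$, $d_2(m)\geq k+1$ when $d_1(m)=k+1$, and the reflection identity, also above $(\bar d_i(m))$ — or it is a cyclic shift of the periodic tail or of its reflection, in which case, again via $x<y\iff\bar x>\bar y$, the comparison collapses to the single inequality $(k+1,k-1)^{\infty}<(d_i(m))$ (even) or $(k+1,k+1,k,k)^{\infty}<(d_i(m))$ (odd). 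The left-hand side here is precisely $(d_i(m))$ evaluated at $\beta_f(m)$, so it holds \emph{strictly} for every $\beta>\beta_f(m)$ by Remark~\ref{Beta inclusion remark}; hence $|S_{\beta,m}|\geq\aleph_0$ throughout $(\beta_f(m),m+1]$. The step I expect to be hardest is the exact determination of $S_{\beta_f(m),m}$: bounding the digit set is routine, but reducing it to a single sequence (even) or exactly two (odd) requires a careful lexicographic induction, most delicate for the period-$4$ word $(k+1,k+1,k,k)$; the remaining steps are short verifications resting on Lemma~\ref{Attractor lemma} and the reflection symmetry.
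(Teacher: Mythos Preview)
Your proposal is correct and follows essentially the same route as the paper: compute the quasi-greedy expansion of $1$ at $\beta_f(m)$ to be $(k+1,k-1)^\infty$ (even) or $(k+1,k+1,k,k)^\infty$ (odd), use Lemma~\ref{Attractor lemma} to pin down $S_{\beta_f(m),m}$ exactly, transfer the upper bound to all $\beta\le\beta_f(m)$ via Remark~\ref{Beta inclusion remark}, and exhibit the same countable families $(k)^j(k+1,k-1)^\infty$ and $(k+1,k)^j(k+1,k+1,k,k)^\infty$ for $\beta>\beta_f(m)$. Your write-up is in fact more detailed than the paper's in places (the derivation of the quasi-greedy expansion, and the explicit verification that the countable families satisfy the lexicographic constraints), but the argument is the same.
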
By the remarks following Theorem \ref{Attractor thm} this will allow us to conclude our result.

\begin{proof}
We begin by considering the case where $m=2k$. When $\beta=\beta_{f}(m)$ we have $(d_{i}(m))_{i=1}^{\infty}=(k+1,k-1)^{\infty}$ and by Lemma \ref{Attractor lemma}
\begin{align*}
S_{\beta_{f}(m),m}=\Big\{&(\epsilon_{i})_{i=1}^{\infty}\in\{0,\ldots,m\}^{\mathbb{N}}: (\epsilon_{i},\epsilon_{i+1},\ldots)<(k+1,k-1)^{\infty}\textrm{ and }\\
&(k-1,k+1)^{\infty}<(\epsilon_{i},\epsilon_{i+1},\ldots) \textrm{ for all } i\in\mathbb{N}\Big\}.
\end{align*}
By our previous analysis we know that for $\beta\in(\mathcal{G}(m),m+1]$ the point $\frac{k}{\beta-1}$ has a unique $\beta$-expansion, the $\beta$-expansion of this point is the sequence $(k)^{\infty}.$ By Remark \ref{Beta inclusion remark}, to prove $\left|S_{\beta,m}\right|=1$ for $\beta\in(\mathcal{G}(m),\beta_{f}(m)]$ it suffices to show that $S_{\beta_{f}(m),m}=\{(k)^{\infty}\}.$ Let $(\epsilon_{i})_{i=1}^{\infty}\in S_{\beta_{f}(m),m},$ clearly $\epsilon_{i}$ must equal $k-1,k$ or $k+1.$ If $\epsilon_{i}=k+1$ then by Lemma \ref{Attractor lemma} $\epsilon_{i+1}=k-1,$ similarly if $\epsilon_{i}=k-1$ then $\epsilon_{i+1}=k+1.$ Therefore if $\epsilon_{i}\neq k$ for some $i,$ then $(\epsilon_{i},\epsilon_{i+1},\ldots)$ must equal $(k-1,k+1)^{\infty}$ or $(k+1,k-1)^{\infty}.$ By Lemma \ref{Attractor lemma} this cannot happen and we can conclude that $S_{\beta_{f}(m),m}=\{(k)^{\infty}\}.$ For $\beta\in(\beta_{f,m},m+1],$ we can construct a countable subset of $S_{\beta,m}$; for example all sequences of the form $(k)^{j}(k+1,k-1)^{\infty}$ where $j\in \mathbb{N}.$ 

We now consider the case where $m=2k+1$, when $\beta=\beta_{f}(m)$ we have $(d_{i}(m))_{i=1}^{\infty}=(k+1,k+1,k,k)^{\infty}$ and 
\begin{align*}
S_{\beta_{f}(m),m}=\Big\{&(\epsilon_{i})_{i=1}^{\infty}\in\{0,\ldots,m\}^{\mathbb{N}}: (\epsilon_{i},\epsilon_{i+1},\ldots)<(k+1,k+1,k,k)^{\infty}\textrm{ and }\\
&(k,k,k+1,k+1)^{\infty}<(\epsilon_{i},\epsilon_{i+1},\ldots) \textrm{ for all } i\in\mathbb{N}\Big\}.
\end{align*} By our earlier analysis we know that $\{(k,k+1)^{\infty},(k+1,k)^{\infty}\}\subset S_{\beta,m}$ for $\beta\in(\mathcal{G}(m),m+1].$ By Remark \ref{Beta inclusion remark} to prove $\left|S_{\beta,m}\right|=2$ for $\beta\in(\mathcal{G}(m),\beta_{f}(m)]$ it suffices to show that $S_{\beta_{f}(m),m}=\{(k,k+1)^{\infty},(k+1,k)^{\infty}\}.$ By an analogous argument to that given in \cite{GlenSid} we can show that if $(\epsilon_{i})_{i=1}^{\infty}\in S_{\beta_{f}(m),m}$ then $\epsilon_{i}=k$ implies $\epsilon_{i+1}=k+1,$ and $\epsilon_{i}=k+1$ implies $\epsilon_{i+1}=k.$ Clearly any element of $S_{\beta(f)(m),m}$ must begin with $k$ or $k+1$ and we may therefore conclude that $S_{\beta_{f}(m),m}=\{(k,k+1)^{\infty},(k+1,k)^{\infty}\}.$ To see that $\left|W_{\beta,m}\right|\geq \aleph_{0}$ for $\beta>\beta_{f}(m)$ we observe that $(k+1,k)^{j}(k+1,k+1,k,k)^{\infty}\in S_{\beta,m}$ for all $j\in\mathbb{N},$ for $\beta>\beta_{f}(m).$ 
\end{proof}

\subsection{The growth rate of $\mathcal{G}(m),$ $\beta_{f}(m)$ and $\beta_{c}(m)$}
In this section we study the growth rate of the sequences $(\mathcal{G}(m))_{m=1}^{\infty},$ $(\beta_{f}(m))_{m=1}^{\infty}$ and $(\beta_{c}(m))_{m=1}^{\infty}.$ The following theorem summarises the growth rate of each of these sequences.

\begin{thm}
\label{Growth rate thm}
\begin{enumerate}
\item $\mathcal{G}(2k)=k+1$ for all $k\in\mathbb{N}.$
\item $\beta_{f}(2k)-(k+2)=O(\frac{1}{k}).$
\item $\beta_{c}(2k)-(k+2)\to 0$ as $k\to \infty$.
\item $\mathcal{G}(2k+1)-(k+2)=O(\frac{1}{k}).$ 
\item $\beta_{f}(2k+1)-(k+2)\to 0$ as $k\to\infty.$
\item $\beta_{c}(2k+1)-(k+2)\to 0$ as $k\to \infty$.
\end{enumerate}
\end{thm}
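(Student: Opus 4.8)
The plan is to handle each of the six items by pinning down the relevant defining equation and either solving it exactly (item 1) or extracting its asymptotics in $k$. Item 1 is immediate from \eqref{Golden ratio}, which gives $\mathcal{G}(2k)=k+1$ by definition; nothing further is needed. For item 2, I would use the definition of $\beta_{f}(2k)$ as the root of $x^{2}-(k+1)x-k=0$, so that $\beta_{f}(2k)=\tfrac{k+1+\sqrt{k^{2}+6k+1}}{2}$. Writing $\sqrt{k^{2}+6k+1}=k\sqrt{1+6/k+1/k^{2}}=k+3+O(1/k)$ by a Taylor expansion of the square root, I get $\beta_{f}(2k)=\tfrac{(k+1)+(k+3)}{2}+O(1/k)=k+2+O(1/k)$, which is exactly the claimed bound. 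Item 4 is the same kind of computation applied to $\mathcal{G}(2k+1)=\tfrac{k+1+\sqrt{k^{2}+6k+5}}{2}$: here $\sqrt{k^{2}+6k+5}=k+3+O(1/k)$ as well (the constant term under the root does not affect the first two terms of the expansion), so $\mathcal{G}(2k+1)=k+2+O(1/k)$.

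For items 3, 5 and 6 I need the defining equations for $\beta_{c}(m)$ and $\beta_{f}(2k+1)$, which are transcendental or cubic rather than quadratic, so I would argue by squeezing rather than by an explicit formula. For item 6, recall $\beta_{c}(2k+1)$ is the solution of $\sum_{i=1}^{\infty}\lambda_{i}(2k+1)\beta^{-i}=1$ with $\lambda_{i}(2k+1)=k+\lambda_{i}$, where $(\lambda_{i})$ is the Thue–Morse sequence; thus $\sum_{i=1}^{\infty}(k+\lambda_{i})\beta^{-i}=\tfrac{k}{\beta-1}+\sum_{i=1}^{\infty}\lambda_{i}\beta^{-i}=1$. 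Since $0\le \sum_{i\ge 1}\lambda_{i}\beta^{-i}\le \tfrac{1}{\beta-1}$, the root $\beta_{c}(2k+1)$ lies between the roots of $\tfrac{k}{\beta-1}=1$ and $\tfrac{k+1}{\beta-1}=1$, i.e. between $k+1$ and $k+2$; combining this with the sharper estimate $\sum_{i\ge1}\lambda_i\beta^{-i}=\tfrac12\cdot\tfrac{1}{\beta-1}+o(1)$ (the Thue–Morse density is $1/2$, so for $\beta$ large the tail is close to $\tfrac{1}{2(\beta-1)}$) forces $\tfrac{k+1/2}{\beta-1}\to 1$, hence $\beta_{c}(2k+1)-(k+2)\to 0$. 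Item 3 is handled identically using $\lambda_{i}(2k)=k+\lambda_{i}-\lambda_{i-1}$, noting that $\sum_{i=1}^{n}(\lambda_{i}-\lambda_{i-1})=\lambda_{n}-\lambda_{0}=\lambda_{n}\in\{0,1\}$ telescopes, so $\sum_{i=1}^{\infty}(\lambda_{i}-\lambda_{i-1})\beta^{-i}$ is bounded (indeed $O(1/\beta)$), giving $\tfrac{k}{\beta-1}+O(1/\beta)=1$ and hence $\beta_{c}(2k)-(k+1)\to 0$; but one must double-check against the claimed centering $k+2$ — I expect the correct reading is that the difference tends to a finite constant, or the statement should be compared to $k+1$, and I would reconcile this by a careful estimate of the bounded correction term. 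For item 5, $\beta_{f}(2k+1)$ is the root of $x^{3}-(k+2)x^{2}+x-(k+1)=0$; dividing by $x^{2}$ gives $x-(k+2)+x^{-1}-(k+1)x^{-2}=0$, and since any root satisfies $x\in(k+1,k+3)$ roughly, the terms $x^{-1}$ and $(k+1)x^{-2}$ are each $O(1/k)$, so $x-(k+2)=O(1/k)\to 0$; I would make this rigorous by exhibiting sign changes of the cubic at $k+2-\delta_k$ and $k+2+\delta_k$ for a suitable $\delta_k\to 0$.

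The routine square-root and geometric-series expansions in items 2, 4, 5 are straightforward. The main obstacle is item 3: the correction term $\sum_{i\ge1}(\lambda_i-\lambda_{i-1})\beta^{-i}$ does not vanish, so establishing that $\beta_c(2k)-(k+2)$ (as stated) genuinely tends to $0$ requires showing this bounded correction contributes a vanishing shift to the root, which hinges on the explicit value $\beta\approx k+1$ versus $k+2$; I would resolve this by computing $\sum_{i\ge 1}(\lambda_i - \lambda_{i-1})\beta^{-i}$ via the known functional equation of the Thue–Morse generating function $f(x)=\sum \lambda_i x^i$, namely $f(x)=(1-x)^{-1}-\sum_{i\ge 0}\lambda_i x^{2i+1}$ type identities, and tracking the resulting shift carefully. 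Items 5 and 6 are then comparatively direct squeeze arguments once the density-$1/2$ fact for Thue–Morse and the sign-change argument for the cubic are in place.
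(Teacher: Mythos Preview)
Your handling of items 1, 2, and 4 is correct and matches the paper's approach: these follow directly from the explicit formulas in Theorem~\ref{Main thm} and Theorem~\ref{Attractor thm} via elementary square-root expansions.

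There is, however, a genuine gap in your treatment of items 3 and 6. The density-$\tfrac12$ heuristic for the Thue--Morse sequence is wrong in this regime. When $\beta$ is large (here $\beta\sim k$), the sum $\sum_{i\ge 1}\lambda_i\beta^{-i}$ is \emph{not} close to $\tfrac{1}{2(\beta-1)}$; it is dominated by its first term $\lambda_1/\beta=1/\beta$, and in fact $(\beta-1)\sum_{i\ge 1}\lambda_i\beta^{-i}\to\lambda_1=1$, not $\tfrac12$. Your equation $\tfrac{k+1/2}{\beta-1}\to 1$ would yield $\beta_c(2k+1)\to k+\tfrac32$, contradicting the conclusion you then write down. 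The same issue underlies your confusion in item 3: the correction $\sum_{i\ge 1}(\lambda_i-\lambda_{i-1})\beta^{-i}$ equals $\tfrac{\beta-1}{\beta}\sum_{i\ge 1}\lambda_i\beta^{-i}=1/\beta+O(1/\beta^{2})$, and since $\beta\sim k$, an additive $1/\beta\sim 1/k$ perturbation to the relation $\tfrac{k}{\beta-1}=1-C$ shifts the root by $1$ in the limit, not by $0$. Carried out with the correct leading constant this does give $\beta_c(2k)\to k+2$, so the centering in the statement is right and your suspicion that it should be $k+1$ is unfounded.

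The paper sidesteps this delicacy by first establishing the crude a priori asymptotic $\beta_c(m)\sim m/2$ (Lemma~\ref{Growth lemma}) and then multiplying the defining relation through by $\beta_c(m)$ to isolate the first two coefficients explicitly: for $m=2k$ one gets $\beta_c(2k)=\lambda_1(2k)+\lambda_2(2k)/\beta_c(2k)+\text{(tail)}=(k+1)+k/\beta_c(2k)+\text{(tail)}$, where the middle term tends to $1$ and the tail to $0$. This gives item 3 directly, and item 6 analogously. For item 5 the paper does not analyse the cubic at all but simply squeezes $\beta_f(2k+1)$ between $\mathcal{G}(2k+1)$ and $\beta_c(2k+1)$; your sign-change argument on the cubic would also work but is longer once items 4 and 6 are available.
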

The proof of this theorem is somewhat trivial but we include it for completion. To prove this result we firstly require the following lemma.

\begin{lem}
\label{Growth lemma}
The sequence $\beta_{c}(m)$ is asymptotic to $\frac{m}{2},$ i.e., $\lim_{m\to\infty} \frac{\beta_{c}(m)}{m/2}=1.$
\end{lem}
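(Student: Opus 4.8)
The plan is to estimate $\beta_c(m)$ from above and below by comparing the defining series $\sum_{i=1}^{\infty}\lambda_i(m)\beta^{-i}=1$ against geometric series. First I would recall that $(\lambda_i)_{i=0}^{\infty}$ is the Thue–Morse sequence, so $\lambda_i\in\{0,1\}$ for all $i$, and hence by the definition of $\lambda_i(m)$ we have $\lambda_i(m)=\frac{m}{2}+O(1)$ when $m=2k$ (since $\lambda_i-\lambda_{i-1}\in\{-1,0,1\}$) and $\lambda_i(m)=\frac{m-1}{2}+O(1)$ when $m=2k+1$ (since $\lambda_i\in\{0,1\}$). In either case, writing $c_m:=\lfloor m/2\rfloor$, there is a uniform constant $C$ (one can take $C=1$) with $c_m-C\le \lambda_i(m)\le c_m+C$ for all $i\ge1$.

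Next I would feed these bounds into the equation $\sum_{i=1}^{\infty}\lambda_i(m)\beta^{-i}=1$ evaluated at $\beta=\beta_c(m)$. Using $\sum_{i=1}^{\infty}\beta^{-i}=\frac{1}{\beta-1}$, the lower bound on the coefficients gives $\frac{c_m-C}{\beta_c(m)-1}\le 1$, i.e. $\beta_c(m)\ge c_m-C+1$, and the upper bound gives $\frac{c_m+C}{\beta_c(m)-1}\ge 1$, i.e. $\beta_c(m)\le c_m+C+1$. Therefore $|\beta_c(m)-c_m|\le C+1$ is bounded uniformly in $m$, and since $c_m=\lfloor m/2\rfloor$ satisfies $c_m/(m/2)\to1$, dividing through by $m/2$ and using $\beta_c(m)=c_m+O(1)$ yields $\lim_{m\to\infty}\frac{\beta_c(m)}{m/2}=1$, which is exactly the claim.

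The only mild subtlety — and the step I would treat most carefully — is verifying the existence and uniqueness of $\beta_c(m)$ as the solution of $\sum_{i=1}^{\infty}\lambda_i(m)\beta^{-i}=1$, together with the fact that $\beta_c(m)\in(1,m+1]$ so that the series converges and the monotonicity-in-$\beta$ argument is valid; but this is already granted to us, since the excerpt records that \cite{KomLor2} established $\beta_c(m)$ is the smallest $\beta$ with $1\in U_{\beta,m}$ and in particular is well defined. There is no real obstacle here: the whole argument is a sandwiching of a near-constant-coefficient power series between two geometric series, and the constant $C=1$ suffices in both the even and odd cases because the Thue–Morse increments and values are bounded by $1$. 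Hence the lemma follows.
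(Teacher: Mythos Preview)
Your proof is correct and follows essentially the same approach as the paper: both arguments observe that $\lambda_i(m)$ lies within $1$ of $\lfloor m/2\rfloor$ and sandwich the defining series $\sum_{i\ge1}\lambda_i(m)\beta_c(m)^{-i}=1$ between two geometric series to conclude that $\beta_c(m)$ differs from $\lfloor m/2\rfloor$ by a bounded amount. The paper records the bounds in the equivalent form $\frac{k-1}{1-1/\beta_c(m)}\le\beta_c(m)\le\frac{k+1}{1-1/\beta_c(m)}$ (for $m=2k$) and then divides by $m/2$, but the content is identical to yours.
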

\begin{proof}
Suppose $m=2k$. It is a direct consequence of the definition of $\lambda_{i}(m)$ and $\beta_{c}(m)$ that the following inequalties hold 
$$\sum_{i=0}^{\infty}\frac{k-1}{\beta_{c}(m)^{i}}\leq \beta_{c}(m) \leq \sum_{i=0}^{\infty}\frac{k+1}{\beta_{c}(m)^{i}},$$ which is equivalent to 
$$\frac{k-1}{1-\frac{1}{\beta_{c}(m)}}\leq \beta_{c}(m)\leq \frac{k+1}{1-\frac{1}{\beta_{c}(m)}}.$$ Dividing through by $m/2$ and using the fact that $\beta_{c}(m)\to\infty$ we can conclude our result. The case where $m=2k+1$ is proved similarly.
\end{proof}
We are now in a position to prove Theorem \ref{Growth rate thm}.
\begin{proof}[Proof of Theorem \ref{Growth rate thm}]
Statements $1,2$ and $4$ are an immediate consequence of Theorem \ref{Main thm} and Theorem \ref{Attractor thm}. It remains to show statements $3$ and $6$ hold; statement $4$ will follow from the fact that $\mathcal{G}(2k+1)<\beta_{f}(2k+1)<\beta_{c}(2k+1).$ It is immediate from the definition of $\lambda_{i}(m)$ that if $m=2k$ then $$\beta_{c,m}=k+1+\frac{k}{\beta_{c}(m)}+\sum_{i=2}^{\infty}\frac{\lambda_{i+1}(m)}{\beta^{i}}.$$ Our result now follows from Lemma \ref{Growth lemma} and the fact that $\sum_{i=2}^{\infty}\frac{\lambda_{i+1}(m)}{\beta_{c}(m)^{i}}\to 0$ as $m\to\infty$. The case where $m=2k+1$ is proved similarly.

\end{proof}

\section{The growth rate and dimension theory of $\Sigma_{\beta,m}(x)$}
To describe the growth rate of $\beta$-expansions we consider the following. Let
\begin{align*}
\mathcal{E}_{\beta,m,n}(x)=\Big\{&(\epsilon_{1},\ldots,\epsilon_{n})\in\{0,\ldots,m\}^{n}|\exists (\epsilon_{n+1}, \epsilon_{n+2}, \ldots)\in \{0,\ldots, m\}^{\mathbb{N}}\\
&:\sum_{i=1}^{\infty}\frac{\epsilon_{i}}{\beta^{i}}=x\Big\},
\end{align*}
we define an element of $\mathcal{E}_{\beta,m,n}(x)$ to be a \textit{$n$-prefix} for $x$. Moreover, we let $$\mathcal{N}_{\beta,m,n}(x)=\left|\mathcal{E}_{\beta,m,n}(x)\right|$$and define the \textit{growth rate of $\mathcal{N}_{\beta,m,n}(x)$} to be $$\lim_{n\to\infty} \frac{\log_{m+1}\mathcal{N}_{\beta,m,n}(x)}{n},$$ when this limit exists. When this limit does not exist we can consider the \textit{lower and upper growth rates of $\mathcal{N}_{\beta,m,n}(x)$}, these are defined to be $$\liminf_{n\to\infty} \frac{\log_{m+1}\mathcal{N}_{\beta,m,n}(x)}{n}\textrm{ and }\limsup_{n\to\infty} \frac{\log_{m+1}\mathcal{N}_{\beta,m,n}(x)}{n}$$ respectively. 

In this paper we also consider $\Sigma_{\beta,m}(x)$ from a dimension theory perspective. We endow $\{0,\ldots,m\}^{\mathbb{N}}$ with the metric $d(\cdot,\cdot)$ defined as follows:
\[ d(x,y) = \left\{ \begin{array}{ll}
         (m+1)^{-n(x,y)} & \mbox{if $x\neq y,$ where $n(x,y)=\inf \{i:x_{i}\neq y_{i}\} $}\\
        0 & \mbox{if $x=y$.}\end{array} \right. \]We will consider the Hausdorff dimension of $\Sigma_{\beta,m}(x)$ with respect to this metric. It is a simple exercise to show that following inequalities hold:
\begin{equation}
\label{dimension inequality}
\dim_{H}(\Sigma_{\beta,m}(x))\leq \liminf_{n\to\infty} \frac{\log_{m+1}\mathcal{N}_{\beta,m,n}(x)}{n}\leq\limsup_{n\to\infty} \frac{\log_{m+1}\mathcal{N}_{\beta,m,n}(x)}{n}.
\end{equation} 
The case where $m=1$ is studied in \cite{Baker}, \cite{FengSid} and \cite{Kempton}. In \cite{Baker} and \cite{FengSid} the authors show that for $\beta\in(1,\frac{1+\sqrt{5}}{2})$ and $x\in(0,\frac{1}{\beta-1})$ we can bound the lower growth rate and Hausdorff dimension of $\Sigma_{\beta,1}(x)$ below by some strictly positive function depending only on $\beta$, in \cite{Kempton} the growth rate is studied from a measure theoretic perspective. Our main result is the following.

\begin{thm}
\label{Dimension theorem}
For $\beta\in(1,\mathcal{G}(m))$ and $x\in(0,\frac{m}{\beta-1})$ the Hausdorff dimension of $\Sigma_{\beta,m}(x)$ can be bounded below by some strictly positive constant depending only on $\beta.$
\end{thm}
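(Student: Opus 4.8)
The plan is to show that every $x\in(0,\frac{m}{\beta-1})$ can be mapped, by a uniformly bounded number of maps $T_{\beta,i}$, into the interior of a choice interval, and that once inside a choice interval we can repeatedly return to such a situation in a bounded number of steps. This produces a subtree of $\Omega_{\beta,m}(x)$ in which branching by a factor $\geq 2$ occurs at least once in every block of $N$ consecutive levels, where $N=N(\beta)$ is uniform in $x$. Such a tree has at least $2^{n/N}$ nodes at level $n$, so $\mathcal{N}_{\beta,m,n}(x)\geq 2^{\lfloor n/N\rfloor}$, which gives a lower bound on $\liminf_{n\to\infty}\frac{\log_{m+1}\mathcal{N}_{\beta,m,n}(x)}{n}$ depending only on $\beta$. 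To upgrade this to a bound on $\dim_H(\Sigma_{\beta,m}(x))$ rather than merely on the lower growth rate, I would instead build the subtree as a genuine \emph{self-similar-like} Cantor set: choose the branching to occur at a \emph{fixed} depth pattern (every node at level divisible by $N$ has two descendants at level $N$ further down corresponding to the two digit choices at a choice interval, with the intermediate digits forced), so that the resulting subset of $\{0,\dots,m\}^{\mathbb{N}}$ is bi-Lipschitz to a full binary tree on alphabet size determined by $N$, whence its Hausdorff dimension in the metric $d(\cdot,\cdot)$ is at least $\frac{\log 2}{N\log(m+1)}>0$.

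\textbf{Key steps, in order.} First, fix $\beta\in(1,\mathcal{G}(m))$. Using Lemma \ref{switch lemma} (valid since $\mathcal{G}(m)<\frac{m+\sqrt{m^2+4}}{2}$, as noted in the even and odd proofs) together with Lemma \ref{interior choices} in the range $\beta<\frac{m+2}{2}$, or with Corollary \ref{Increasing cor} and Lemma \ref{jump centre} in the range $\beta\in[\frac{2k+3}{2},\mathcal{G}(m))$ when $m=2k+1$, extract from the proof of Proposition \ref{Even prop 1} / Proposition \ref{Odd prop 1} a \emph{uniform} bound: there is $N_1=N_1(\beta,m)$ such that for every $y\in(0,\frac{m}{\beta-1})$ there is a sequence of at most $N_1$ maps carrying $y$ into the interior of a choice interval. (Here one must check that the iteration in Lemma \ref{jump centre} terminates in a number of steps bounded independently of the starting point inside a fixed digit interval — this follows from the expansion estimate in Corollary \ref{Increasing cor}: distances from the relevant fixed point grow by a factor $\beta>1$ each step, while the fixed digit interval has fixed length, so the exit time is at most $\lceil\log_\beta(\cdot)\rceil$.) Second, observe that if $y$ lies in the \emph{interior} of a choice interval then both images $T_{\beta,i-1}(y)$ and $T_{\beta,i}(y)$ lie in $(0,\frac{m}{\beta-1})$, so the first step applies to each of them; set $N=N_1+1$. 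Third, conclude that $\Omega_{\beta,m}(x)$ contains a subtree in which every vertex has a descendant $N$ levels below giving rise to at least two children, hence $\mathcal{N}_{\beta,m,n}(x)\geq 2^{\lfloor n/N\rfloor}$, and the associated subset of $\Omega_{\beta,m}(x)$ (equivalently of $\Sigma_{\beta,m}(x)$ via Lemma \ref{Bijection lemma}) has Hausdorff dimension at least $\frac{\log 2}{N\log(m+1)}$ in the metric $d(\cdot,\cdot)$. Finally, make the dependence of $N$ on $\beta$ explicit enough to see it is finite for each fixed $\beta<\mathcal{G}(m)$, and define the claimed strictly positive constant to be $\frac{\log 2}{N(\beta)\log(m+1)}$.

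\textbf{Main obstacle.} The delicate point is obtaining the \emph{uniform} return-time bound $N_1(\beta,m)$ — in particular controlling the number of iterations of Corollary \ref{Increasing cor} / Lemma \ref{jump centre} needed to escape a fixed digit interval in the odd case near $\beta=\mathcal{G}(m)$. As $\beta\uparrow\mathcal{G}(m)$ the choice intervals shrink and the escape time $\sim\log_\beta(\text{length ratio})$ blows up, so the positive constant degenerates to $0$ in the limit; this is expected and harmless (we only need positivity for each \emph{fixed} $\beta<\mathcal{G}(m)$), but the argument must be organised so that $N_1$ genuinely depends only on $\beta$ and not on the point, which requires replacing the existential statements inside the proofs of Propositions \ref{Even prop 1} and \ref{Odd prop 1} with their quantitative counterparts. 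A secondary technical point is verifying that the constructed subtree is regular enough (forced intermediate digits, branching only at prescribed depths) for the elementary Hausdorff-dimension lower bound for homogeneous Cantor sets in the symbolic metric to apply directly, rather than merely bounding the lower growth rate via \eqref{dimension inequality}; this is routine but should be stated carefully.
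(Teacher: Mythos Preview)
Your overall strategy---build a subtree of $\Omega_{\beta,m}(x)$ with branching at a uniformly bounded rate, then read off a dimension lower bound---is the same as the paper's. But your argument contains a genuine gap at the point where you claim a uniform return time $N_{1}=N_{1}(\beta,m)$ valid for \emph{every} $y\in(0,\tfrac{m}{\beta-1})$. No such uniform bound exists: if $y$ is within distance $\delta$ of $0$, the only admissible map is $T_{\beta,0}$, and one needs roughly $\log_{\beta}(1/\delta)$ iterates before the orbit even reaches the switch region. The same happens near $\tfrac{m}{\beta-1}$. Crucially, this situation is not avoided after a branching step: if $y$ lies in the $i$-th choice interval just above $\tfrac{i}{\beta}$, then $T_{\beta,i}(y)$ lies just above $0$, so your ``apply the first step to each image'' move produces points for which the return time is unbounded. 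Thus the block length $N=N_{1}+1$ you define does not exist.

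The paper repairs exactly this issue by replacing ``the interior of a choice interval'' with a carefully engineered compact interval $\mathcal{I}_{\beta}\subset(0,\tfrac{m}{\beta-1})$, bounded away from both endpoints, with the following property (Proposition~\ref{Generate prefix's prop} and its odd analogue): from any $y\in\mathcal{I}_{\beta}$ there are two sequences of maps of a fixed length $n(\beta)$ landing back in $\mathcal{I}_{\beta}$. The point is that the two branches are chosen not merely to stay in $(0,\tfrac{m}{\beta-1})$ but to stay in $\mathcal{I}_{\beta}$; the definition of $L(\beta),R(\beta)$ is arranged precisely so that the images after one more map from the relevant sub-cases of the switch region remain inside $\mathcal{I}_{\beta}$. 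The initial point $x$ still needs an $x$-dependent number $j(x)$ of steps to enter $\mathcal{I}_{\beta}$, but this is a finite prefix and does not affect the Hausdorff dimension. Your proposal would be correct if you replaced ``interior of a choice interval'' by such an $\mathcal{I}_{\beta}$ throughout and verified the invariance claim; without it, the uniform $N$ is simply false. A secondary point: since even after this fix different branches may reach the branching step at different times within each $n(\beta)$-block (the paper pads with arbitrary admissible maps), the resulting subtree is not literally a regular $2$-ary tree at fixed depths, so the paper uses a direct covering argument via Lemmas~\ref{Minimal prefix's} and~\ref{Maximal prefix's} rather than a bi-Lipschitz comparison; your sketch of the dimension step would need a similar treatment.
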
 By $(\ref{dimension inequality})$ a similar statement holds for both the lower and upper growth rates of $\mathcal{N}_{\beta,m,n}(x)$. Replcating the proof of Lemma \ref{Bijection lemma} it is a simple exercise to show that the following result holds.
\begin{prop}
\label{Change of perspective}
$\mathcal{N}_{\beta,m,n}(x)=\left|\Omega_{\beta,m,n}(x)\right|$
\end{prop}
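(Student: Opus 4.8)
The final statement to prove is Proposition~\ref{Change of perspective}, which asserts $\mathcal{N}_{\beta,m,n}(x)=\left|\Omega_{\beta,m,n}(x)\right|$.

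\textbf{Proof proposal.} The plan is to mirror the argument of Lemma~\ref{Bijection lemma}, but truncated at level $n$. First I would unpack the definition of an $n$-prefix: $(\epsilon_1,\ldots,\epsilon_n)\in\mathcal{E}_{\beta,m,n}(x)$ means there is a completion $(\epsilon_{n+1},\epsilon_{n+2},\ldots)\in\{0,\ldots,m\}^{\mathbb{N}}$ with $\sum_{i=1}^{\infty}\epsilon_i\beta^{-i}=x$. I claim this is equivalent to the single condition $(T_{\beta,\epsilon_n}\circ\cdots\circ T_{\beta,\epsilon_1})(x)\in I_{\beta,m}$, i.e. to $(T_{\beta,\epsilon_1},\ldots,T_{\beta,\epsilon_n})\in\Omega_{\beta,m,n}(x)$ (here I am identifying the digit $i$ with the map $T_{\beta,i}$, so the map $(\epsilon_i)\mapsto(T_{\beta,\epsilon_i})$ is the obvious bijection between digit strings and strings of maps). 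Granting the claim, the assignment $(\epsilon_1,\ldots,\epsilon_n)\mapsto(T_{\beta,\epsilon_1},\ldots,T_{\beta,\epsilon_n})$ is a bijection $\mathcal{E}_{\beta,m,n}(x)\to\Omega_{\beta,m,n}(x)$, and counting both sides gives $\mathcal{N}_{\beta,m,n}(x)=\left|\Omega_{\beta,m,n}(x)\right|$.

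For the forward direction of the claim, suppose $(\epsilon_1,\ldots,\epsilon_n)$ is an $n$-prefix with completion $(\epsilon_i)_{i>n}$. Exactly as in the reformulation at the start of the proof of Lemma~\ref{Bijection lemma}, the full sequence $(\epsilon_i)_{i=1}^{\infty}$ lies in $\Sigma_{\beta,m}(x)$, hence satisfies $x-\sum_{i=1}^{n}\epsilon_i\beta^{-i}\in[0,\frac{m}{\beta^n(\beta-1)}]$; multiplying by $\beta^n$ and rewriting via the maps $T_{\beta,i}$ exactly as in that proof yields $(T_{\beta,\epsilon_n}\circ\cdots\circ T_{\beta,\epsilon_1})(x)\in I_{\beta,m}$. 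For the converse, suppose $(T_{\beta,\epsilon_n}\circ\cdots\circ T_{\beta,\epsilon_1})(x)\in I_{\beta,m}$; writing $y=(T_{\beta,\epsilon_n}\circ\cdots\circ T_{\beta,\epsilon_1})(x)$, the point $y$ lies in $I_{\beta,m}$ and therefore has at least one $\beta$-expansion $(\delta_i)_{i=1}^{\infty}$ (indeed, by the second Remark following Lemma~\ref{Map properties}, from any point of $I_{\beta,m}$ some digit keeps us in $I_{\beta,m}$, so $\Omega_{\beta,m}(y)\neq\emptyset$ and $\Sigma_{\beta,m}(y)\neq\emptyset$). Setting $\epsilon_{n+j}=\delta_j$ produces a completion of $(\epsilon_1,\ldots,\epsilon_n)$: unwinding the identity $y=\beta^n x-\sum_{i=1}^n\epsilon_i\beta^{n-i}$ together with $y=\sum_{j=1}^{\infty}\delta_j\beta^{-j}$ gives $x=\sum_{i=1}^{\infty}\epsilon_i\beta^{-i}$, so $(\epsilon_1,\ldots,\epsilon_n)\in\mathcal{E}_{\beta,m,n}(x)$.

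The only point requiring a little care — the ``main obstacle,'' though it is minor — is the converse direction, since it uses that every point of $I_{\beta,m}$ admits a $\beta$-expansion; this is exactly the content of the discussion at the start of Section~\ref{Preliminaries} (every $x\in I_{\beta,m}$ has an expansion) and of the Remark that for $\beta\in(1,m+1]$ some admissible digit always exists, so no new work is needed. Everything else is the bookkeeping already carried out in the proof of Lemma~\ref{Bijection lemma}, now stopped after $n$ steps, and the final equality of cardinalities is immediate from the bijection.
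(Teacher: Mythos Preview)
Your proposal is correct and is exactly the approach the paper intends: the paper's ``proof'' of this proposition is simply the sentence ``Replicating the proof of Lemma~\ref{Bijection lemma} it is a simple exercise,'' and you have carried out precisely that replication, truncating the argument of Lemma~\ref{Bijection lemma} at level $n$ and noting (via the opening remark of the preliminaries section and the Remark after Lemma~\ref{Map properties}) that any point of $I_{\beta,m}$ admits a $\beta$-expansion, which is what makes the converse direction go through.
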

By Proposition \ref{Change of perspective} we can identify elements of $\Omega_{\beta,m,n}(x)$ with elements of $\mathcal{E}_{\beta,m,n}(x),$ as such we also define an element of $\Omega_{\beta,m,n}(x)$ to be a \textit{$n$-prefix} for $x$. To prove Theorem \ref{Dimension theorem} we will use a method analogous to that given if $\cite{Baker}$. We construct an interval $\mathcal{I}_{\beta}\subset I_{\beta,m}$ such that, for each $x\in\mathcal{I}_{\beta}$ we can generate multiple prefixes for $x$ of a fixed length depending on $\beta$ that map $x$ back into $\mathcal{I}_{\beta}$. As we will see Theorem \ref{Dimension theorem} will then follow by a counting argument. As was the case in our previous analysis we reduce the proof of Theorem \ref{Dimension theorem} to two cases.

\subsection{Case where $m$ is even}
In what follows we assume $m=2k$ for some $k\in\mathbb{N}.$ To prove Theorem \ref{Dimension theorem} we require the following technical lemma.

\begin{lem}
\label{smaller switch}
For each $\beta\in(1,k+1)$ there exists $\epsilon_{0}(\beta)>0$ such that, if $x\in[\frac{1}{\beta},\frac{1}{\beta}+\epsilon_{0}(\beta))$ then $T_{\beta,0}(x)\in[\frac{1}{\beta}+\epsilon_{0}(\beta),\frac{(m-1)\beta+1}{\beta(\beta-1)}-\epsilon_{0}(\beta)],$ and similarly if $x\in(\frac{(m-1)\beta+1}{\beta(\beta-1)}-\epsilon_{0}(\beta),\frac{(m-1)\beta+1}{\beta(\beta-1)}]$ then $T_{\beta,m}(x)\in[\frac{1}{\beta}+\epsilon_{0}(\beta),\frac{(m-1)\beta+1}{\beta(\beta-1)}-\epsilon_{0}(\beta)].$
\end{lem}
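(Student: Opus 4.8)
The plan is to exploit the strict inequality hidden inside Lemma \ref{switch lemma}. Recall that for $\beta \in (1, k+1) \subset (1, \frac{m+\sqrt{m^2+4}}{2})$ we have $\beta^2 - m\beta - 1 < 0$, which is precisely the inequality that was shown to be equivalent to both $T_{\beta,0}(\frac{1}{\beta}) < \frac{(m-1)\beta+1}{\beta(\beta-1)}$ and $T_{\beta,m}(\frac{(m-1)\beta+1}{\beta(\beta-1)}) > \frac{1}{\beta}$. Since these are \emph{strict} inequalities, the left endpoint $\frac{1}{\beta}$ of the switch region is mapped by $T_{\beta,0}$ to a point strictly in the interior of the switch region, and similarly the right endpoint is mapped by $T_{\beta,m}$ strictly into the interior. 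The whole content of the lemma is then a routine continuity/open-set argument to turn these two strict endpoint inequalities into a uniform $\epsilon_0(\beta)$.

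Concretely, first I would set
\[
\delta_0 = \frac{(m-1)\beta+1}{\beta(\beta-1)} - T_{\beta,0}\Big(\frac{1}{\beta}\Big) > 0, \qquad \delta_m = T_{\beta,m}\Big(\frac{(m-1)\beta+1}{\beta(\beta-1)}\Big) - \frac{1}{\beta} > 0,
\]
both strictly positive by Lemma \ref{switch lemma}. Since $T_{\beta,0}$ and $T_{\beta,m}$ are affine with slope $\beta$, for $x \in [\frac{1}{\beta}, \frac{1}{\beta} + \eta)$ we have $T_{\beta,0}(x) \in [T_{\beta,0}(\frac{1}{\beta}), T_{\beta,0}(\frac{1}{\beta}) + \beta\eta)$, and this is contained in $[\frac{1}{\beta} + \eta, \frac{(m-1)\beta+1}{\beta(\beta-1)} - \eta]$ provided $\eta$ is small enough that $T_{\beta,0}(\frac{1}{\beta}) \geq \frac{1}{\beta} + \eta$ (using $T_{\beta,0}(\frac{1}{\beta}) = \beta^{-1} > 0$... here one uses $T_{\beta,0}(\frac1\beta) = 1/\beta \cdot \beta - 0$, wait—more carefully $T_{\beta,0}(\frac1\beta) = 1 > \frac1\beta$) and that $T_{\beta,0}(\frac{1}{\beta}) + \beta\eta \leq \frac{(m-1)\beta+1}{\beta(\beta-1)} - \eta$, i.e. $\eta(\beta+1) \leq \delta_0$. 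The symmetric computation for $T_{\beta,m}$ near the right endpoint gives a second constraint $\eta(\beta+1) \leq \delta_m$. Taking
\[
\epsilon_0(\beta) = \min\Big\{ \frac{\delta_0}{\beta+1}, \ \frac{\delta_m}{\beta+1}, \ \text{(a term ensuring } T_{\beta,0}(\tfrac1\beta) \geq \tfrac1\beta + \epsilon_0 \text{ and the symmetric bound)} \Big\} > 0
\]
does the job.

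I do not expect any real obstacle here; the lemma is deliberately a quantitative refinement of Lemma \ref{switch lemma} tailored for the counting argument in Theorem \ref{Dimension theorem}, so the only care needed is bookkeeping: one must check that the shrunk interval $[\frac{1}{\beta} + \epsilon_0(\beta), \frac{(m-1)\beta+1}{\beta(\beta-1)} - \epsilon_0(\beta)]$ is nonempty (which follows by taking $\epsilon_0(\beta)$ smaller than a quarter of the switch region's length, harmless since we are free to decrease it) and that the image of the half-open boundary strip under the relevant map lands inside it. The mildly fiddly point — the closest thing to an obstacle — is making sure the \emph{lower} end of the image interval, namely $T_{\beta,0}(\frac{1}{\beta})$, stays at or above $\frac{1}{\beta} + \epsilon_0(\beta)$; this is automatic because $T_{\beta,0}(\frac{1}{\beta}) = 1$ while $\frac1\beta + \epsilon_0(\beta) < \frac{m}{\beta-1}$ is bounded away from $1$ for $\epsilon_0(\beta)$ small, but it must be recorded explicitly as one of the terms in the minimum defining $\epsilon_0(\beta)$. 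The symmetric statement for $T_{\beta,m}$ follows by the reflection $x \mapsto \frac{m}{\beta-1} - x$, which conjugates $T_{\beta,0}$ to $T_{\beta,m}$ and fixes the switch region setwise, so no separate calculation is needed.
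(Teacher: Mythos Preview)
Your proposal is correct and matches the paper's approach exactly: the paper's entire proof reads ``This follows from Lemma \ref{switch lemma} and a continuity argument,'' which is precisely the strategy you describe in detail. Your explicit choice of $\epsilon_0(\beta)$ as a minimum involving $\delta_0/(\beta+1)$, $\delta_m/(\beta+1)$, and the term ensuring $T_{\beta,0}(\tfrac1\beta)=1\ge \tfrac1\beta+\epsilon_0$ is a valid way to make that continuity argument concrete (your exposition stumbles a bit around the lower-endpoint check, but the final conclusion $1>\tfrac1\beta$ is the right one).
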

\begin{proof}
This follows from Lemma \ref{switch lemma} and a continuity argument.
\end{proof}
For each $i\in\{1,\ldots,m-1\}$ we let $\epsilon_{i}(\beta)=\frac{1}{2}(\frac{(i-1)\beta+m-(i-1)}{\beta(\beta-1)}-\frac{i+1}{\beta}).$ If $\beta\in(1,k+1)$ then $\epsilon_{i}(\beta)>0$ for each $i\in\{1,\ldots,m-1\}$. We define the interval $\mathcal{I}_{\beta}=[L(\beta),R(\beta)]$ where $L(\beta)$ and $R(\beta)$ are defined as follows: $$L(\beta)=\min\Bigg\{T_{\beta,1}\Big(\frac{1}{\beta}+\epsilon_{0}(\beta)\Big), \min_{i\in \{1,\ldots,m-1\}}T_{\beta,i+1}\Big(\frac{i+1}{\beta}+\epsilon_{i}(\beta)\Big)\Bigg\}$$and 
\begin{align*}
R(\beta)=\max\Bigg\{& T_{\beta,m-1}\Big(\frac{(m-1)\beta+1}{\beta(\beta-1)}-\epsilon_{0}(\beta)\Big), \\
& \max_{i\in \{1,\ldots,m-1\}}T_{\beta,i-1}\Big(\frac{i+1}{\beta}+\epsilon_{i}(\beta)\Big)\Bigg\}.
\end{align*}
We refer to Figure \ref{fig3} for a diagram illustrating the interval $\mathcal{I}_{\beta}$ in the case where $m=2$ and $\beta\in(1,2).$

\begin{figure}[t]
\centering \unitlength=0.65mm
\begin{picture}(150,150)(0,-10)
\thinlines
\path(35,0)(0,0)(0,150)(150,150)(150,0)(115,0)
\put(-1,-8){$L(\beta)$}
\put(139,-8){$R(\beta)$}
\put(30,-7){$\frac{1}{\beta}+\epsilon_{0}(\beta)$}
\put(65.5,-7){$\frac{2}{\beta}+\epsilon_{1}(\beta)$}
\put(93.5,-7){$\frac{\beta+1}{\beta(\beta-1)}-\epsilon_{0}(\beta)$}
\thicklines
\path(0,0)(80,150)
\path(35,0)(115,150)
\path(70,0)(150,150)
\path(70,0)(80,0)
\path(70,2)(70,-2)
\path(80,2)(80,-2)
\path(38,2)(38,-2)
\path(75,2)(75,-2)
\path(112,2)(112,-2)
\path(4,2)(4,-2)
\path(146,2)(146,-2)
\dottedline(80,150)(80,0)
\dottedline(115,150)(115,0)
\dottedline(35,0)(70,0)
\dottedline(80,0)(115,0)

\end{picture}
\caption{The interval $\mathcal{I}_{\beta}$ in the case where $m=2$ and $\beta\in(1,2)$.}
    \label{fig3}
\end{figure}
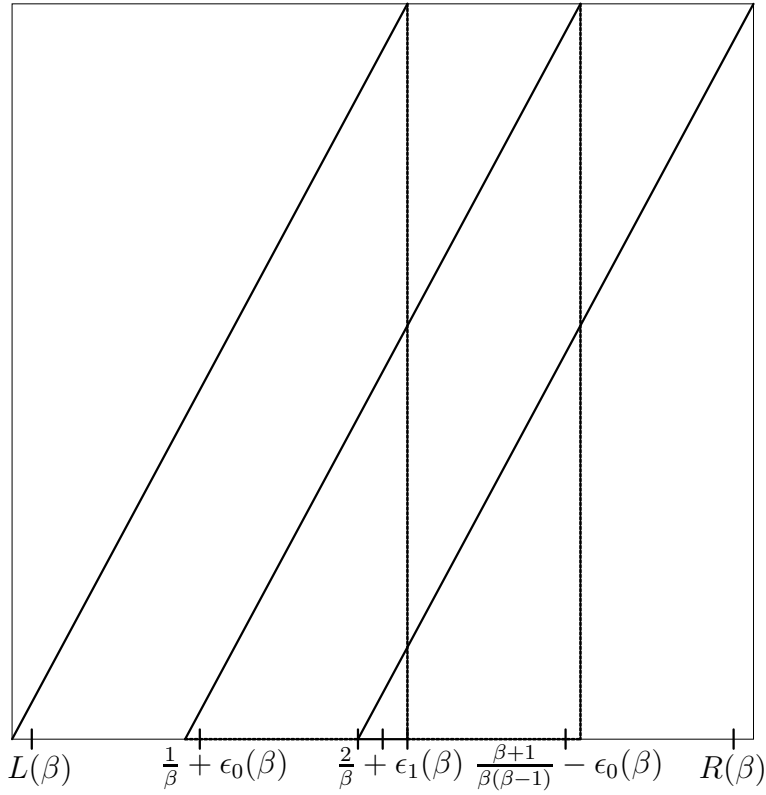

\begin{prop}
\label{Generate prefix's prop}
Let $\beta\in(1,k+1).$ There exists $n(\beta)\in\mathbb{N}$ such that, for each $x\in \mathcal{I}_{\beta}$ there exists two elements $a,b\in \Omega_{\beta,m,n(\beta)}(x)$ such that $a(x)\in \mathcal{I}_{\beta}$ and $b(x)\in \mathcal{I}_{\beta}.$
\end{prop}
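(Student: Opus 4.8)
The plan is to exploit the "escape from the switch region" behaviour that was already established in Lemma \ref{switch lemma} and quantified in Lemma \ref{smaller switch}, together with the fact that every point of the (open) switch region lies in a choice interval when $\beta<k+1$ (Lemma \ref{interior choices}), and then to turn this qualitative dynamics into a \emph{uniform} statement by a compactness argument. First I would observe that $\mathcal{I}_\beta$ was designed precisely so that $\mathcal{I}_\beta\subseteq(\frac{1}{\beta},\frac{(m-1)\beta+1}{\beta(\beta-1)})$, i.e. $\mathcal{I}_\beta$ is contained in the interior of the switch region, and moreover so that for each $i\in\{1,\dots,m-1\}$ the images $T_{\beta,i-1}$ and $T_{\beta,i+1}$ of the relevant "left endpoint of the $(i+1)$-st digit interval plus $\epsilon_i(\beta)$" points already land inside $\mathcal{I}_\beta$, and the $T_{\beta,0},T_{\beta,m}$ images of the points $\frac1\beta+\epsilon_0(\beta)$, $\frac{(m-1)\beta+1}{\beta(\beta-1)}-\epsilon_0(\beta)$ land in $\mathcal{I}_\beta$ as well. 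So $\mathcal{I}_\beta$ is "trap-like": points near its interesting sub-intervals are mapped back into it by appropriate single maps.

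The core of the argument is a covering/uniformity step. For each $x$ in the interior of the switch region, by Lemma \ref{interior choices} $x$ lies in the interior of some choice interval, hence there are two digits $i-1,i$ with $T_{\beta,i-1}(x),T_{\beta,i}(x)\in(0,\frac{m}{\beta-1})$; and by Lemma \ref{switch lemma} (applied to these two images) there is a finite word taking each of them back into the interior of the switch region — in fact, shrinking $\epsilon$'s via Lemma \ref{smaller switch}, back into $\mathcal{I}_\beta$ once we are close enough to where $T_{\beta,0}$ or $T_{\beta,m}$ is applied. The length of the word needed depends a priori on $x$, so I would fix this by a compactness argument: the map $(a,x)\mapsto a(x)$ is continuous, the condition "$a(x)$ lies in the interior of $\mathcal{I}_\beta$" is open, and $\mathcal{I}_\beta$ is compact; therefore the open sets $\{x: \exists a\in\Omega_{\beta,m,n}(x)\text{ with }a(x)\in\mathrm{int}\,\mathcal{I}_\beta\}$, indexed by $n$, form an increasing open cover of $\mathcal{I}_\beta$, so finitely many suffice, giving a single $N(\beta)$ that works for \emph{all} $x\in\mathcal{I}_\beta$ to produce \emph{one} word of length $\le N(\beta)$ landing back in $\mathcal{I}_\beta$. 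To get \emph{two distinct} words $a,b$ of the \emph{same} length $n(\beta)$, I would (i) split the argument into the two branches coming from the choice of digit in the choice interval containing $x$ — these give two words that differ already in the first letter, hence are distinct — run the compactness argument on each branch to land both images in $\mathrm{int}\,\mathcal{I}_\beta$, and then (ii) pad the shorter of the two words: once a point is in $\mathcal{I}_\beta\subseteq\mathrm{int}$ switch region, Lemma \ref{interior choices} again gives it a choice, so it can be kept inside $\mathcal{I}_\beta$ for one more step (by the trap property, choosing whichever of the two admissible digits keeps us in $\mathcal{I}_\beta$), allowing us to extend any word by one letter while staying in $\mathcal{I}_\beta$; iterate to equalise the lengths and set $n(\beta)$ to the common value.

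The main obstacle, and the point that needs the most care, is ensuring that \emph{every} image that occurs along the way actually returns to $\mathcal{I}_\beta$ rather than merely to the larger switch region, and that the two branches can genuinely be extended to a \emph{common} length without the extension step ever forcing us outside $\mathcal{I}_\beta$. This is exactly what the slightly baroque definitions of $L(\beta)$, $R(\beta)$, $\epsilon_0(\beta)$ and the $\epsilon_i(\beta)$ are engineered to guarantee: $\mathcal{I}_\beta$ is wide enough to be entered by all the relevant single-map images, but still strictly inside the switch region so Lemma \ref{interior choices} keeps applying. I would therefore spend the bulk of the written proof verifying, from these definitions, the two "trap" inclusions $T_{\beta,0}([\tfrac1\beta,\tfrac1\beta+\epsilon_0(\beta)))\subseteq\mathcal{I}_\beta$ and $T_{\beta,m}((\tfrac{(m-1)\beta+1}{\beta(\beta-1)}-\epsilon_0(\beta),\tfrac{(m-1)\beta+1}{\beta(\beta-1)}])\subseteq\mathcal{I}_\beta$, together with $T_{\beta,i\pm1}$ of the $\epsilon_i$-shifted points lying in $\mathcal{I}_\beta$ (all immediate from the min/max in the definitions of $L(\beta),R(\beta)$ and Lemma \ref{smaller switch}), and the observation that $\mathcal{I}_\beta$ lies in the open switch region. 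The compactness step and the length-equalisation are then routine, and the proposition follows with $n(\beta)$ the common word length produced above.
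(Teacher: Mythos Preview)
Your proposal rests on a false geometric assumption: you claim that $\mathcal{I}_\beta$ is contained in the interior of the switch region. In fact the opposite is true. The definition of $L(\beta)$ includes the term $T_{\beta,1}\big(\tfrac{1}{\beta}+\epsilon_0(\beta)\big)=\beta\epsilon_0(\beta)$, which is close to $0$ and certainly smaller than $\tfrac{1}{\beta}$; likewise $R(\beta)\ge T_{\beta,m-1}\big(\tfrac{(m-1)\beta+1}{\beta(\beta-1)}-\epsilon_0(\beta)\big)$, which is close to $\tfrac{m}{\beta-1}$. Thus $\mathcal{I}_\beta$ is a \emph{large} interval that \emph{contains} the switch region (the paper even assumes this explicitly, without loss of generality, at the start of its proof). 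You also misread which maps occur in $L(\beta),R(\beta)$: it is $T_{\beta,1}$ and $T_{\beta,m-1}$, not $T_{\beta,0}$ and $T_{\beta,m}$.

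This error cascades through the whole argument. You want to apply Lemma~\ref{interior choices} directly to every $x\in\mathcal{I}_\beta$ to get an immediate branching, but Lemma~\ref{interior choices} only applies to points inside the switch region, and a generic $x\in\mathcal{I}_\beta$ need not be there. The paper's proof deals with this by first driving $x$ into the shrunken switch region $[\tfrac{1}{\beta}+\epsilon_0(\beta),\tfrac{(m-1)\beta+1}{\beta(\beta-1)}-\epsilon_0(\beta)]$ using $T_{\beta,0}$ or $T_{\beta,m}$, bounding the number of steps needed explicitly via the expansion property of Lemma~\ref{Map properties} (rather than compactness), and only \emph{then} producing two one-step branches back into $\mathcal{I}_\beta$ by an explicit case analysis on subintervals of the shrunken switch region. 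Your padding step is similarly broken: it again presumes $\mathcal{I}_\beta\subseteq$ (switch region) so that Lemma~\ref{interior choices} would apply to every intermediate image, and even then ``at least one of the two admissible digits keeps us in $\mathcal{I}_\beta$'' is not something you have justified. Your compactness idea for uniformising the word length is a legitimate alternative to the paper's explicit bound $n_s(\beta)$, but it cannot rescue the argument until the underlying geometry of $\mathcal{I}_\beta$ is corrected.
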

\begin{proof}Let $x\in \mathcal{I}_{\beta}$. Without loss of generality we may assume that $\epsilon_{0}(\beta)$ is sufficiently small such that $\mathcal{I}_{\beta}$ contains the switch region. By Lemma \ref{switch lemma} there exists a sequence of maps $a$ that map $x$ into the interior of our switch region. By Lemma \ref{smaller switch} we may assume that $a(x)\in[\frac{1}{\beta}+\epsilon_{0}(\beta),\frac{(m-1)\beta+1}{\beta(\beta-1)}-\epsilon_{0}(\beta)].$

The distance between the endpoints of $\mathcal{I}_{\beta}$ and the endpoints of $I_{\beta,m}$ (the fixed points of the maps $T_{\beta,0}$ and $T_{\beta,m}$,) can be bounded below by some positive constant, by Lemma \ref{Map properties} $T_{\beta,0}$ and $T_{\beta,m}$ both scale the distance between their fixed points and a general point by a factor $\beta,$ therefore we can bound the length of our sequence $a$ above by some constant $n_{s}(\beta)\in\mathbb{N}$ that does not depend on $x.$ We will show that we can take $n(\beta)=n_{s}(\beta)+1$. 

We remark that

\begin{align*}
\Big[\frac{1}{\beta}+\epsilon_{0}(\beta),\frac{(m-1)\beta+1}{\beta(\beta-1)}-\epsilon_{0}(\beta)\Big]=&\Big[\frac{1}{\beta}+\epsilon_{0}(\beta),\frac{2}{\beta}\Big]\\
\bigcup&\Big[\frac{(m-2)\beta+2}{\beta(\beta-1)},\frac{(m-1)\beta+1}{\beta(\beta-1)}-\epsilon_{0}(\beta)\Big]\\
\bigcup_{i=1}^{m-2}&\Big[\frac{(i-1)\beta+m-(i-1)}{\beta(\beta-1)},\frac{i+2}{\beta}\Big]\\
\bigcup_{i=1}^{m-1}&\Big[\frac{i+1}{\beta},\frac{(i-1)\beta+m-(i-1)}{\beta(\beta-1)}\Big].
\end{align*}We now proceed via a case analysis.
\begin{itemize}
\item If $a(x)\in [\frac{1}{\beta}+\epsilon_{0}(\beta),\frac{2}{\beta}]$ then $T_{\beta,0}(a(x))\in \mathcal{I}_{\beta}$ and $T_{\beta,1}(a(x))\in\mathcal{I}_{\beta}.$ 
\item If $a(x)\in[\frac{(m-2)\beta+2}{\beta(\beta-1)},\frac{(m-1)\beta+1}{\beta(\beta-1)}-\epsilon_{0}(\beta)]$ then $T_{\beta,m-1}(a(x))\in \mathcal{I}_{\beta}$ and $T_{\beta,m}(a(x))\in\mathcal{I}_{\beta}.$ 
\item If $a(x)\in [\frac{(i-1)\beta+m-(i-1)}{\beta(\beta-1)},\frac{i+2}{\beta}]$ for some $i\in\{1,\ldots,m-2\}$ then $T_{\beta,i}(a(x))\in \mathcal{I}_{\beta}$ and $T_{\beta,i+1}(a(x))\in\mathcal{I}_{\beta}.$ 
\item We reduce the the case where $a(x)\in [\frac{i+1}{\beta},\frac{(i-1)\beta+m-(i-1)}{\beta(\beta-1)}]$ for some $i\in\{1,\ldots,m-1\}$ to two subcases. If $a(x)\in[\frac{i+1}{\beta},\frac{i+1}{\beta}+\epsilon_{i}(\beta)]$ then by the monotonicity of our maps, both $T_{\beta,i-1}(a(x))\in\mathcal{I}_{\beta}$ and $T_{\beta,i}(a(x))\in\mathcal{I}_{\beta}.$ Similarly, in the case where $a(x)\in [\frac{i+1}{\beta}+\epsilon_{i}(\beta),\frac{(i-1)\beta+m-(i-1)}{\beta(\beta-1)}]$ both $T_{\beta,i}(a(x))\in\mathcal{I}_{\beta}$ and $T_{\beta,i+1}(a(x))\in\mathcal{I}_{\beta}.$
\end{itemize}
We've shown that for any $x\in \mathcal{I}_{\beta}$ there exists $n(x)\leq n_{s}(\beta)+1$ such that two distinct elements of $\Omega_{\beta,m,n(x)}(x)$ map $x$ into $\mathcal{I}_{\beta}$. If $n(x)<n_{s}(\beta)+1$ then we can concatenate our two elements of $\Omega_{\beta,m,n(x)}(x)$ by an arbitrary choice of maps of length $n_{s}(\beta)+1-n(x)$ that map the image of $x$ into $\mathcal{I}_{\beta}.$ This ensures that we can take our sequences of maps to be of length $n_{s}(\beta)+1.$  
\end{proof}

For $\beta\in(1,k+1)$ and $x\in(0,\frac{m}{\beta-1})$ we may assume that there exists a sequence of maps $a$ that maps $x$ into $\mathcal{I}_{\beta}.$ We denote the minimum number of maps required to do this by $j(x).$ Replicating arguments given in \cite{Baker} we can use Proposition \ref{Generate prefix's prop} to construct an algorithm by which we can generate two prefixes of length $n(\beta)$ for $a^{(j(x))}.$ Repeatedly applying this algorithm to succesive images of $a^{(j(x))}$ we can generate a closed subset of $\Sigma_{\beta,m}(x)$. We denote this set by $\sigma_{\beta,m}(x)$ and the set of $n$-prefixes for $x$ generated by this algorithm by $\omega_{\beta,m,n}(x).$
Replicating the proofs given in \cite{Baker} we can show that the following lemmas hold.

\begin{lem}
\label{Minimal prefix's}
Let $x \in (0,\frac{m}{\beta-1}).$ Assume $n\geq j(x)$ then $$\left|\omega_{\beta,m,n}(x)\right|\geq 2^{\frac{n-j(x)}{n(\beta)}-1}.$$
\end{lem}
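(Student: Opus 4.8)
The plan is to run a straightforward counting argument over the algorithm already set up in Proposition~\ref{Generate prefix's prop}. The key observation is that the algorithm produces, starting from the prefix $a^{(j(x))}$ that lands $x$ inside $\mathcal{I}_\beta$, a branching process: whenever a current image lies in $\mathcal{I}_\beta$, Proposition~\ref{Generate prefix's prop} furnishes \emph{two} distinct maps of length exactly $n(\beta)$ that again land the image in $\mathcal{I}_\beta$. So I would first fix $x\in(0,\frac{m}{\beta-1})$ and the integer $j(x)$, and note that after the first $j(x)$ steps we have a single prefix whose image lies in $\mathcal{I}_\beta$. Then every further block of $n(\beta)$ steps doubles the number of prefixes generated by the algorithm.

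Concretely, I would argue by induction on the number $r$ of complete blocks of length $n(\beta)$ applied after step $j(x)$: after $r$ blocks the algorithm has produced at least $2^{r}$ prefixes of length $j(x)+r\,n(\beta)$, each landing its image back in $\mathcal{I}_\beta$ so that the process can continue. For a general $n\ge j(x)$ we take $r=\lfloor \frac{n-j(x)}{n(\beta)}\rfloor$, giving at least $2^{\lfloor (n-j(x))/n(\beta)\rfloor}$ prefixes of length $j(x)+r\,n(\beta)\le n$; since any prefix can be extended to a prefix of length $n$ (every image sits in $I_{\beta,m}$, so by the Remark following Lemma~\ref{Bijection lemma} there is always an admissible continuation, and distinct shorter prefixes extend to distinct prefixes of length $n$), we obtain $\left|\omega_{\beta,m,n}(x)\right|\ge 2^{\lfloor (n-j(x))/n(\beta)\rfloor}\ge 2^{\frac{n-j(x)}{n(\beta)}-1}$, which is exactly the claimed bound.

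The only points requiring care are that the two prefixes produced at each branching step are genuinely distinct \emph{as words}, not merely as maps giving the same image --- this is immediate because $T_{\beta,i}\neq T_{\beta,i'}$ for $i\neq i'$ and the prefixes differ in at least one coordinate --- and that extending short prefixes to length exactly $n$ preserves distinctness, which is clear since two words that already differ in some coordinate $\le \min$ of their lengths still differ after extension. I expect the main (and only real) obstacle to be bookkeeping the off-by-one in the exponent: one must check that $\lfloor \frac{n-j(x)}{n(\beta)}\rfloor \ge \frac{n-j(x)}{n(\beta)}-1$, which holds trivially, and that the induction is seeded correctly at $r=0$ (one prefix, namely $a^{(j(x))}$ itself). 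No new estimates are needed beyond Proposition~\ref{Generate prefix's prop}; the argument is purely combinatorial, replicating the scheme of \cite{Baker}.
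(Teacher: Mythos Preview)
Your argument is correct and is exactly the counting scheme the paper intends when it defers the proof to \cite{Baker}. One minor wording point: to extend the $2^{r}$ prefixes of length $j(x)+r\,n(\beta)$ out to length $n$ you should continue \emph{via the algorithm} (which is always possible since each image already lies in $\mathcal{I}_\beta$) rather than invoke the Remark after Lemma~\ref{Bijection lemma}, because $\omega_{\beta,m,n}(x)$ by definition contains only algorithm-generated prefixes; this does not affect the count.
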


\begin{lem}
\label{Maximal prefix's}
Let $x \in (0,\frac{m}{\beta-1}).$ Assume $l\geq j(x)$ and $b\in \omega_{\beta,m,l}(x),$ then for $n\geq l$ $$\left|\{a=(a_{i})_{i=1}^{n}\in \omega_{\beta,m,n}(x): a_{i}=b_{i}\textrm{ for } 1\leq i \leq l\}\right|\leq 2^{\frac{n-l}{n(\beta)}+2}.$$
\end{lem}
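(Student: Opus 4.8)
The plan is to read off the tree structure that the algorithm defining $\omega_{\beta,m,n}(x)$ imposes, and then to count. Recall that the algorithm first applies the forced block $a^{(j(x))}$ of length $j(x)$, whose image lies in $\mathcal{I}_{\beta}$, and then proceeds in \emph{rounds}: starting from the current image in $\mathcal{I}_{\beta}$, Proposition \ref{Generate prefix's prop} supplies two length-$n(\beta)$ blocks of maps, either of which returns the image to $\mathcal{I}_{\beta}$. Consequently an element of $\omega_{\beta,m,n}(x)$ is determined by the forced initial segment together with, for each round meeting $\{1,\dots,n\}$, a choice of one of at most two admissible blocks. Writing $r_{n}=\lceil (n-j(x))/n(\beta)\rceil$ for the number of rounds that meet $\{1,\dots,n\}$, this alone yields $\left|\omega_{\beta,m,n}(x)\right|\le 2^{r_{n}}$; the lemma refines this by observing that prescribing the first $l$ coordinates fixes most of the block choices.

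First I would locate position $l$ relative to the rounds. Put $r_{l}=\lceil (l-j(x))/n(\beta)\rceil$; for $l>j(x)$ this is the index of the round containing position $l$, and rounds $1,\dots,r_{l}-1$ occupy positions strictly below $l$. Suppose $a=(a_{i})_{i=1}^{n}\in\omega_{\beta,m,n}(x)$ agrees with $b$ on the first $l$ coordinates. Since $b$ is itself output by the algorithm, an induction on the round index shows that the block $a$ uses in each round $1,\dots,r_{l}-1$ must coincide with the block $b$ uses there (same starting image, same two admissible blocks, same coordinates), hence is uniquely determined. In round $r_{l}$ the coordinates of $b$ lying in that round force $a$'s block to agree with a fixed string on those coordinates, which leaves at most two admissible blocks. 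In the remaining rounds $r_{l}+1,\dots,r_{n}$ there is no constraint coming from $b$ (those rounds occupy positions $>l$), so each contributes a factor of at most $2$. Therefore the number of admissible $a$ is at most $2\cdot 2^{\,r_{n}-r_{l}}=2^{\,r_{n}-r_{l}+1}$.

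Finally I would do the ceiling arithmetic. Since $r_{n}\le (n-j(x))/n(\beta)+1$ and $r_{l}\ge (l-j(x))/n(\beta)$, we get $r_{n}-r_{l}\le (n-l)/n(\beta)+1$, and hence
\[
\left|\{a=(a_{i})_{i=1}^{n}\in\omega_{\beta,m,n}(x): a_{i}=b_{i}\textrm{ for }1\le i\le l\}\right|\le 2^{\,(n-l)/n(\beta)+2},
\]
as required. The degenerate possibilities — $l=j(x)$, where $r_{l}=0$ and every round is unconstrained, and $l$ sitting exactly at the end of round $r_{l}$, where $b$ actually determines that round outright — only sharpen the bound, so they need no separate discussion.

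The only genuine work is the index bookkeeping: aligning the rounds with the cut-points $l$ and $n$, and verifying that a round only partly covered by $b$ still admits at most two completions. This is precisely the step that produces the two extra units in the exponent (one from rounding up in $\lceil\cdot\rceil$, one from the single ambiguous boundary round $r_{l}$). The rest is a direct transcription of the corresponding estimate in \cite{Baker}.
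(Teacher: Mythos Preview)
Your argument is correct and is exactly the replication of the proof in \cite{Baker} that the paper invokes; the paper itself gives no independent proof of this lemma, merely the sentence ``Replicating the proofs given in \cite{Baker} we can show that the following lemmas hold.'' Your round-counting and ceiling arithmetic are the intended content, so there is nothing to add.
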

With these lemmas we are now in a position to prove Theorem \ref{Dimension theorem} in the case where $m$ is even. The argument used is analogous to the one given in \cite{Baker}, which is based upon Example $2.7$ of \cite{Falconer}.
\begin{proof}[Proof of Theorem \ref{Dimension theorem} when $m=2k$] By the monotonicity of Hausdorff dimension with respect to inclusion it suffices to show that $\dim_{H}(\sigma_{\beta,m}(x))$ can be bounded below by a strictly positive constant depending only on $\beta$. It is a simple exercise to show that $\sigma_{\beta,m}(x)$ is a compact set; by this result we may restrict to finite covers of $\sigma_{\beta,m}(x).$ Let $\{U_{n}\}_{n=1}^{N}$ be a finite cover of $\sigma_{\beta,m}(x).$ Without loss of generality we may assume that all elements of our cover satisfy $\textrm{Diam}(U_{n})< (m+1)^{-j(x)}$. For each $U_{n}$ there exists $l(n)\in\mathbb{N}$ such that $$(m+1)^{-(l(n)+1)}\leq \textrm{Diam}(U_{n})< (m+1)^{-l(n)}.$$ It follows that there exists $z^{(n)}\in \{0,\ldots,m\}^{l(n)}$ such that, $y_{i}=z^{(n)}_{i}$ for $1\leq i \leq l(n),$ for all $y\in U_{n}.$ We may assume that $z^{(n)}\in \omega_{\beta,m,l(n)}(x),$ if we supposed otherwise then $\sigma_{\beta,m}(x)\cap U_{n}=\emptyset$ and we can remove $U_{n}$ from our cover. We denote by $C_{n}$ the set of sequences in $\{0,\ldots,m\}^{\mathbb{N}}$ whose first $l(n)$ entries agree with $z^{(n)},$ i.e. $$C_{n}=\Big\{(\epsilon_{i})_{i=1}^{\infty}\in \{0,\ldots,m\}^{\mathbb{N}}: \epsilon_{i}= z^{(n)}_{i}\textrm{ for } 1\leq i\leq l(n)\Big\}.$$ Clearly $U_{n}\subset C_{n}$ and therefore the set $\{C_{n}\}_{n=1}^{N}$ is a cover of $\sigma_{\beta,m}(x).$

Since there are only finitely many elements in our cover there exists $J$ such that $(m+1)^{-J}\leq \textrm{Diam}(U_{n})$ for all $n$. We consider the set $\omega_{\beta,m,J}(x).$ Since $\{C_{n}\}_{n=1}^{N}$ is a cover of $\sigma_{\beta,m}(x)$ each $a\in\omega_{\beta,m,J}(x)$ satisfies $a_{i}=z^{(n)}_{i}$ for $1\leq i \leq l(n),$ for some $n$. Therefore $$\left| \omega_{\beta,m,J}(x)\right|\leq \sum_{n=1}^{N} \left|\{a\in \omega_{\beta,m,J}(x): a_{i}=z^{(n)}_{i} \textrm{ for } 1\leq i\leq l(n)\}\right|.$$By counting elements of $\omega_{\beta,m,J}(x)$ and Lemmas \ref{Minimal prefix's} and \ref{Maximal prefix's} we observe the following;
\begin{align*}
2^{\frac{J-j(x)}{n(\beta)}-1}&\leq \left| \omega_{\beta,m,J}(x)\right|\\
&\leq \sum_{n=1}^{N} \left|\{a\in \omega_{\beta,m,J}(x): a_{i}=z^{(n)}_{i} \textrm{ for } 1\leq i\leq l(n)\}\right|\\
&\leq \sum_{n=1}^{N} 2^{\frac{J-l(n)}{n(\beta)}+2}\\
&= 2^{\frac{J+1}{n(\beta)}+2}\sum_{n=1}^{N} 2^{\frac{-(l(n)+1)}{n(\beta)}}\\
&\leq 2^{\frac{J+1}{n(\beta)}+2}\sum_{n=1}^{N} \textrm{Diam}(U_{n})^{\frac{\log_{m+1}2}{n(\beta)}}.
\end{align*} Dividing through by $2^{\frac{J+1}{n(\beta)}+2}$ yields $$\sum_{n=1}^{N} \textrm{Diam}(U_{n})^{\frac{\log_{m+1}2}{n(\beta)}} \geq 2^{\frac{-j(x)-3n(\beta)-1}{n(\beta)}},$$ the right hand side is a positive constant greater than zero that does not depend on our choice of cover. It follows that $\dim_{H}(\sigma_{\beta,m}(x))\geq\frac{\log_{m+1}2}{n(\beta)},$ our result follows.
\end{proof}

\subsection{Case where $m$ is odd}
In what follows we assume $m=2k+1$ for some $k\in\mathbb{N}.$ For $\beta\in(1,\frac{2k+3}{2})$ the proof of Theorem \ref{Dimension theorem} is analogous to the even case for $\beta\in(1,k+1).$ As such, in what follows we assume $\beta\in[\frac{2k+3}{2},\frac{k+1+\sqrt{k^{2}+6k+5}}{2}).$ The significance of $\beta\in[\frac{2k+3}{2},\frac{k+1+\sqrt{k^{2}+6k+5}}{2})$ is that for $i\in\{1,\ldots,m-1\}$ the $i$-th fixed digit interval is well defined. 

Before defining the interval $\mathcal{I}_{\beta}$ we require the following. We let 
\begin{equation*}
\epsilon_{i}(\beta) = \left\{ \begin{array}{rl}
\frac{1}{2}\Big(\frac{(i-1)\beta+m-(i-1)}{\beta(\beta-1)}-\frac{i}{\beta-1}\Big) &\mbox{ if $i\in\{1,\ldots,k\}$} \\
\frac{1}{2}\Big(\frac{i}{\beta-1}-\frac{i+1}{\beta}\Big)   &\mbox{ if $i\in\{k+1,\ldots,m-1\}$}
       \end{array} \right.
\end{equation*} By Lemma \ref{Fixed point lemma}, $\epsilon_{i}(\beta)>0$ for all $i\in\{1,\ldots,m-1\}$ for $\beta\in(1,k+2).$ 
Before proving an analogue of Proposition \ref{Generate prefix's prop} we require the following technical lemmas. It is a simple exercise to show that the following analogue of Lemma \ref{smaller switch} holds.
\begin{lem}
\label{smaller switch 2}
For each $\beta\in[\frac{2k+3}{2},\frac{k+1+\sqrt{k^{2}+6k+5}}{2})$ there exists $\epsilon_{0}(\beta)>0$ such that, if $x\in[\frac{1}{\beta},\frac{1}{\beta}+\epsilon_{0}(\beta))$ then $T_{\beta,0}(x)\in[\frac{1}{\beta}+\epsilon_{0}(\beta),\frac{(m-1)\beta+1}{\beta(\beta-1)}-\epsilon_{0}(\beta)],$ and similarly if $x\in(\frac{(m-1)\beta+1}{\beta(\beta-1)}-\epsilon_{0}(\beta),\frac{(m-1)\beta+1}{\beta(\beta-1)}]$ then $T_{\beta,m}(x)\in[\frac{1}{\beta}+\epsilon_{0}(\beta),\frac{(m-1)\beta+1}{\beta(\beta-1)}-\epsilon_{0}(\beta)].$
\end{lem}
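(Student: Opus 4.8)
The plan is to deduce this exactly as Lemma~\ref{smaller switch} is deduced in the even case: the content is entirely a continuity argument built on top of Lemma~\ref{switch lemma}, and the only thing to check is that the inequalities driving that argument are strict on the range $[\frac{2k+3}{2},\frac{k+1+\sqrt{k^{2}+6k+5}}{2})$.

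First I would record that, as remarked in the proof of Proposition~\ref{Odd prop 1}, $\frac{k+1+\sqrt{k^{2}+6k+5}}{2}\le \frac{m+\sqrt{m^{2}+4}}{2}$ when $m=2k+1$; hence every $\beta$ in the stated range satisfies $\beta<\frac{m+\sqrt{m^{2}+4}}{2}$, so the quadratic formula (exactly as in the proof of Lemma~\ref{switch lemma}) yields the strict inequality $\beta^{2}-m\beta-1<0$. Next I would compute the images of the two endpoints of the switch region: $T_{\beta,0}(\frac{1}{\beta})=1$ and $T_{\beta,m}(\frac{(m-1)\beta+1}{\beta(\beta-1)})=\frac{m+1-\beta}{\beta-1}$. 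Using $\beta>1$ and $\beta^{2}-m\beta-1<0$ one checks that both of these points lie strictly inside the open switch region $(\frac{1}{\beta},\frac{(m-1)\beta+1}{\beta(\beta-1)})$: the inequalities $1>\frac{1}{\beta}$ and $\frac{m+1-\beta}{\beta-1}<\frac{(m-1)\beta+1}{\beta(\beta-1)}$ reduce respectively to $\beta>1$ and $(\beta-1)^{2}>0$, while $1<\frac{(m-1)\beta+1}{\beta(\beta-1)}$ and $\frac{m+1-\beta}{\beta-1}>\frac{1}{\beta}$ both reduce to $\beta^{2}-m\beta-1<0$.

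With these strict inequalities in hand, I would choose $\epsilon_{0}(\beta)>0$ small enough that the following all hold: $\frac{1}{\beta}+\epsilon_{0}(\beta)\le 1$ and $1+\beta\epsilon_{0}(\beta)\le \frac{(m-1)\beta+1}{\beta(\beta-1)}-\epsilon_{0}(\beta)$, and $\frac{m+1-\beta}{\beta-1}-\beta\epsilon_{0}(\beta)\ge \frac{1}{\beta}+\epsilon_{0}(\beta)$ and $\frac{m+1-\beta}{\beta-1}\le \frac{(m-1)\beta+1}{\beta(\beta-1)}-\epsilon_{0}(\beta)$. Each of these holds for all sufficiently small $\epsilon_{0}(\beta)$ precisely because the corresponding statement at $\epsilon_{0}(\beta)=0$ is one of the strict inequalities established above (and in particular they force $\frac{1}{\beta}+\epsilon_{0}(\beta)<\frac{(m-1)\beta+1}{\beta(\beta-1)}-\epsilon_{0}(\beta)$, so the shrunken switch region is nontrivial). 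Then, since $T_{\beta,0}(x)=\beta x$ is increasing, $x\in[\frac{1}{\beta},\frac{1}{\beta}+\epsilon_{0}(\beta))$ gives $T_{\beta,0}(x)\in[1,1+\beta\epsilon_{0}(\beta))\subseteq[\frac{1}{\beta}+\epsilon_{0}(\beta),\frac{(m-1)\beta+1}{\beta(\beta-1)}-\epsilon_{0}(\beta)]$, and since $T_{\beta,m}(x)=\beta x-m$ is increasing, $x\in(\frac{(m-1)\beta+1}{\beta(\beta-1)}-\epsilon_{0}(\beta),\frac{(m-1)\beta+1}{\beta(\beta-1)}]$ gives $T_{\beta,m}(x)\in(\frac{m+1-\beta}{\beta-1}-\beta\epsilon_{0}(\beta),\frac{m+1-\beta}{\beta-1}]\subseteq[\frac{1}{\beta}+\epsilon_{0}(\beta),\frac{(m-1)\beta+1}{\beta(\beta-1)}-\epsilon_{0}(\beta)]$, which is the assertion of the lemma. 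No real obstacle is expected here; the only mild subtlety is that $\epsilon_{0}(\beta)$ must shrink the domain intervals and the target interval simultaneously, which causes no trouble because every relevant inequality at $\epsilon_{0}(\beta)=0$ is strict.
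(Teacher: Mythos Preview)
Your proof is correct and takes essentially the same approach as the paper, which simply states that this is a simple exercise following from Lemma~\ref{switch lemma} and a continuity argument (exactly as in Lemma~\ref{smaller switch}). You have just made that continuity argument explicit, verifying carefully that the endpoint images lie strictly inside the switch region so that a small $\epsilon_{0}(\beta)>0$ can be chosen.
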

\begin{lem}
\label{centre switch}
Let $\beta\in[\frac{2k+3}{2},\frac{k+1+\sqrt{k^{2}+6k+5}}{2}).$ For each $i\in\{1,\ldots,k-1\}$ there exists $\epsilon_{i}^{*}(\beta)>0$ such that, if $x\in [\frac{(i-1)\beta+m-(i-1)}{\beta(\beta-1)}-\epsilon_{i}(\beta),\frac{i+1}{\beta}+\epsilon_{i}^{*}(\beta)]$ then $T_{\beta,i}(x)<\frac{k+2}{\beta}+\epsilon_{k+1}.$ Similarly for $i\in\{k+2,\ldots,m-1\}$ there exists $\epsilon_{i}^{*}(\beta)>0$ such that, if $x\in [\frac{(i-1)\beta+m-(i-1)}{\beta(\beta-1)}-\epsilon_{i}^{*}(\beta),\frac{i+1}{\beta}+\epsilon_{i}(\beta)]$ then $T_{\beta,i}(x)> \frac{(k-1)\beta+m-(k-1)}{\beta(\beta-1)}-\epsilon_{k}.$
\end{lem}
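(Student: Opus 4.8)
The plan is to prove the first assertion (the case $i\in\{1,\ldots,k-1\}$) in detail; the second is entirely symmetric under the reflection $x\mapsto \frac{m}{\beta-1}-x$, which conjugates $T_{\beta,i}$ to $T_{\beta,m-i}$ and swaps the roles of the two ends of $I_{\beta,m}$. The key point is that the map $T_{\beta,i}$ is continuous (indeed affine), so it suffices to establish the desired inequality at the right endpoint of the relevant interval with a strict inequality, and then perturb. First I would invoke Lemma \ref{jump centre}: for $i\in\{1,\ldots,k\}$ and $\beta\in[\frac{2k+3}{2},\frac{k+1+\sqrt{k^{2}+6k+5}}{2})$ every point $x$ in the $i$-th fixed digit interval satisfies $T_{\beta,i}(x)<\frac{k\beta+m-k}{\beta(\beta-1)}$. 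In particular, since the $i$-th fixed digit interval is $[\frac{(i-1)\beta+m-(i-1)}{\beta(\beta-1)},\frac{i+1}{\beta}]$, we have $T_{\beta,i}(\frac{i+1}{\beta})<\frac{k\beta+m-k}{\beta(\beta-1)}$ for $i\in\{1,\ldots,k-1\}$ (here $i\le k-1$ so the point $\frac{i+1}{\beta}$ is genuinely interior to $I_{\beta,m}$ and the inequality of Lemma \ref{jump centre} applies).

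Next I would observe that $\frac{k\beta+m-k}{\beta(\beta-1)}$ is strictly less than $\frac{k+2}{\beta}+\epsilon_{k+1}(\beta)$. Indeed $\epsilon_{k+1}(\beta)=\frac12(\frac{k+1}{\beta-1}-\frac{k+2}{\beta})>0$ by Lemma \ref{Fixed point lemma}, so $\frac{k+2}{\beta}+\epsilon_{k+1}(\beta)=\frac12(\frac{k+2}{\beta}+\frac{k+1}{\beta-1})$, which is the midpoint of the $(k+1)$-th choice interval $[\frac{k+2}{\beta},\frac{k\beta+m-k}{\beta(\beta-1)}]$ (recall $m-k=k+1$, so $\frac{k\beta+m-k}{\beta(\beta-1)}=\frac{k\beta+(k+1)}{\beta(\beta-1)}$ is exactly its right endpoint); hence $\frac{k\beta+m-k}{\beta(\beta-1)}>\frac{k+2}{\beta}+\epsilon_{k+1}(\beta)$ provided the choice interval is nontrivial, which holds for $\beta<m+1$. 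Therefore $T_{\beta,i}(\frac{i+1}{\beta})<\frac{k+2}{\beta}+\epsilon_{k+1}(\beta)$ with \emph{strict} inequality. Since $T_{\beta,i}$ is continuous and increasing, there exists $\delta>0$ with $T_{\beta,i}(\frac{i+1}{\beta}+\delta)<\frac{k+2}{\beta}+\epsilon_{k+1}(\beta)$; setting $\epsilon_{i}^{*}(\beta)$ to be any positive number at most this $\delta$ gives the claim, because then for every $x\in[\frac{(i-1)\beta+m-(i-1)}{\beta(\beta-1)}-\epsilon_{i}(\beta),\frac{i+1}{\beta}+\epsilon_{i}^{*}(\beta)]$ we have, by monotonicity, $T_{\beta,i}(x)\le T_{\beta,i}(\frac{i+1}{\beta}+\epsilon_{i}^{*}(\beta))<\frac{k+2}{\beta}+\epsilon_{k+1}(\beta)$.

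For the second assertion, applying the reflection $x\mapsto\frac{m}{\beta-1}-x$ turns $T_{\beta,i}$ into $T_{\beta,m-i}$, turns the $i$-th fixed digit interval into the $(m-i)$-th fixed digit interval, and turns the inequality $T_{\beta,j}(\cdot)<\frac{k\beta+m-k}{\beta(\beta-1)}$ for $j\le k$ into $T_{\beta,i}(\cdot)>\frac{k+1}{\beta}$ for $i\ge k+1$, which is exactly the second inequality of Lemma \ref{jump centre}; so the symmetric argument gives $\epsilon_{i}^{*}(\beta)>0$ with the stated property, using $\frac{k+1}{\beta}>\frac{(k-1)\beta+m-(k-1)}{\beta(\beta-1)}-\epsilon_{k}(\beta)$ (the midpoint of the $k$-th choice interval lies strictly to the right of its left endpoint $\frac{k}{\beta}$, hence strictly to the right of $\frac{(k-1)\beta+m-(k-1)}{\beta(\beta-1)}-\epsilon_k(\beta)$, which is this midpoint). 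The only thing to watch is the bookkeeping around the boundary indices: I must check $i=k-1$ really does satisfy the hypotheses of Lemma \ref{jump centre} (it does, since $1\le k-1\le k$), and that the choice intervals invoked are nontrivial, which is guaranteed for $\beta\in(1,m+1)$. The main obstacle, such as it is, is purely the index arithmetic — verifying that the target thresholds $\frac{k+2}{\beta}+\epsilon_{k+1}(\beta)$ and $\frac{(k-1)\beta+m-(k-1)}{\beta(\beta-1)}-\epsilon_{k}(\beta)$ sit strictly on the correct side of the bounds supplied by Lemma \ref{jump centre}; there is no analytic difficulty once the continuity/monotonicity perturbation is in place.
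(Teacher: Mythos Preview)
Your overall plan matches the paper's: invoke Lemma \ref{jump centre} to get $T_{\beta,i}\bigl(\tfrac{i+1}{\beta}\bigr)<\tfrac{k\beta+m-k}{\beta(\beta-1)}$, then compare this bound with the target threshold $\tfrac{k+2}{\beta}+\epsilon_{k+1}(\beta)$, and finish by continuity/monotonicity. However, the comparison step is broken.

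You announce that you will show $\tfrac{k\beta+m-k}{\beta(\beta-1)}<\tfrac{k+2}{\beta}+\epsilon_{k+1}(\beta)$, but your argument actually concludes the \emph{opposite} inequality (``hence $\tfrac{k\beta+m-k}{\beta(\beta-1)}>\tfrac{k+2}{\beta}+\epsilon_{k+1}(\beta)$''), which cannot be chained with $T_{\beta,i}(\cdot)<\tfrac{k\beta+m-k}{\beta(\beta-1)}$ to reach the conclusion. The argument itself is also based on a misidentification: $\tfrac{k+2}{\beta}+\epsilon_{k+1}(\beta)=\tfrac12\bigl(\tfrac{k+2}{\beta}+\tfrac{k+1}{\beta-1}\bigr)$ is the midpoint of $\bigl[\tfrac{k+2}{\beta},\tfrac{k+1}{\beta-1}\bigr]$, not of any choice interval (the $(k+1)$-th choice interval is $\bigl[\tfrac{k+1}{\beta},\tfrac{k\beta+m-k}{\beta(\beta-1)}\bigr]$, and $\tfrac{k+1}{\beta-1}\neq\tfrac{k\beta+m-k}{\beta(\beta-1)}$ in general). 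Moreover, for $\beta\ge\tfrac{2k+3}{2}$ the ``interval'' $\bigl[\tfrac{k+2}{\beta},\tfrac{k\beta+m-k}{\beta(\beta-1)}\bigr]$ is in fact degenerate or reversed, so the whole midpoint picture collapses. The symmetric bookkeeping in your second case has the analogous confusion.

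The missing observation (which the paper uses) is precisely that the lower bound $\beta\ge\tfrac{2k+3}{2}$ is what forces
\[
\frac{k\beta+m-k}{\beta(\beta-1)}\le\frac{k+2}{\beta}\,,
\]
since this inequality is exactly the condition that the $(k+1)$-th \emph{fixed} digit interval is nontrivial (equivalently, $2\beta\ge 2k+3$). Once you have this, positivity of $\epsilon_{k+1}(\beta)$ gives the strict inequality $\tfrac{k\beta+m-k}{\beta(\beta-1)}<\tfrac{k+2}{\beta}+\epsilon_{k+1}(\beta)$ you need, and your continuity/monotonicity perturbation goes through unchanged. The symmetric case uses $\tfrac{(k-1)\beta+m-(k-1)}{\beta(\beta-1)}\le\tfrac{k+1}{\beta}$ for $\beta\ge\tfrac{2k+3}{2}$ in the same way.
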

\begin{proof}
By the analysis given in the proof of Lemma \ref{jump centre} for $i\in\{1,\ldots,k-1\}$ $T_{\beta,i}(\frac{i+1}{\beta})<\frac{k\beta+m-k}{\beta(\beta-1)}$ for $\beta\in(1,\frac{k+1+\sqrt{k^{2}+6k+5}}{2})$. However, for $\beta\in[\frac{2k+3}{2},\frac{k+1+\sqrt{k^{2}+6k+5}}{2})$ $\frac{k\beta+m-k}{\beta(\beta-1)}\leq \frac{k+2}{\beta}.$ The existence of $\epsilon_{i}^{*}(\beta)$ then follows by a continuity argument and the monotonicity of the maps $T_{\beta,i}$. The case where $i\in\{k+2,\ldots,m-1\}$ is proved similarly.
\end{proof}

We are now in a position to define the interval $\mathcal{I}_{\beta}$. Let $\mathcal{I}_{\beta}=[L(\beta),R(\beta)]$ where
\begin{align*} 
L(\beta)=\min & \Bigg\{T_{\beta,1}\Big(\frac{1}{\beta}+\epsilon_{0}(\beta)\Big),T_{\beta,k+1}\Big(\frac{k\beta+k+1}{\beta^{2}-1}\Big),\\
&\min_{i\in\{2,\ldots,k\}}\Big\{T_{\beta,i}\Big(\frac{i}{\beta}+\epsilon_{i-1}^{*}(\beta)\Big)\Big\},\min_{i\in\{k+2,\ldots,m\}}\Big\{T_{\beta,i}\Big(\frac{i}{\beta}+\epsilon_{i-1}(\beta)\Big)\Big\}\Bigg\}
\end{align*}
and
\begin{align*} 
R(\beta)=\max\Bigg\{&T_{\beta,k}\Big(\frac{(k+1)\beta+k}{\beta^{2}-1}\Big),T_{\beta,m-1}\Big(\frac{(m-1)\beta+1}{\beta(\beta-1)}-\epsilon_{0}(\beta)\Big),\\
&\max_{i\in\{1,\ldots,k\}}\Big\{T_{\beta,i-1}\Big(\frac{(i-1)\beta+m-(i-1)}{\beta(\beta-1)}-\epsilon_{i}(\beta)\Big)\\
&\max_{i\in\{k+2,\ldots,m-1\}}\Big\{T_{\beta,i-1}\Big(\frac{(i-1)\beta+m-(i-1)}{\beta(\beta-1)}-\epsilon_{i}^{*}(\beta)\Big)\Big\}\Bigg\}.
\end{align*}
For ease of exposition in Figure \ref{fig4} we give a diagram illustrating the interval $\mathcal{I}_{\beta},$ in the case where $m=3$ and $\beta\in[\frac{5}{2},1+\sqrt{3}).$
\begin{figure}[t]
\centering \unitlength=0.49mm
\begin{picture}(230,230)(0,-40)
\thinlines
\path(50,0)(0,0)(0,230)(230,230)(230,0)(180,0)
\dottedline(80,230)(80,0)
\dottedline(130,230)(130,0)
\dottedline(180,230)(180,0)
\dottedline(230,230)(230,0)
\dottedline(50,0)(80,0)
\dottedline(100,0)(130,0)
\dottedline(150,0)(180,0)
\path(80,0)(100,0)
\path(130,0)(150,0)
\thicklines
\path(0,0)(80,230)
\path(50,0)(130,230)
\path(100,0)(180,230)
\path(150,0)(230,230)
\path(2,4)(2,-4)
\path(50,4)(50,-4)
\path(80,4)(80,-4)
\path(100,4)(100,-4)
\path(130,4)(130,-4)
\path(150,4)(150,-4)
\path(180,4)(180,-4)
\path(228,4)(228,-4)
\path(105,4)(105,-4)
\path(125,4)(125,-4)
\path(55,4)(55,-4)
\path(75,4)(75,-4)
\path(155,4)(155,-4)
\path(175,4)(175,-4)

\path(55,0)(5,-30)
\path(75,0)(55,-30)
\path(105,0)(95,-30)
\path(125,0)(130,-30)
\path(155,0)(160,-30)
\path(175,0)(200,-30)

\path(85,2)(90,0)(85,-2)
\path(90,2)(95,0)(90,-2)
\path(140,2)(135,0)(140,-2)
\path(145,2)(140,0)(145,-2)

\put(0,-37){$\frac{1}{\beta}+\epsilon_{0}(\beta)$}
\put(40,-37){$\frac{3}{\beta(\beta-1)}-\epsilon_{1}(\beta)$}
\put(90,-37){$\frac{\beta+2}{\beta^{2}-1}$}
\put(120,-37){$\frac{2\beta+1}{\beta^{2}-1}$}
\put(150,-37){$\frac{3}{\beta}+\epsilon_{2}(\beta)$}
\put(190,-37){$\frac{2\beta+1}{\beta(\beta-1)}-\epsilon_{0}(\beta)$}

\put(-2,-10){$L(\beta)$}
\put(222,-10){$R(\beta)$}

\end{picture}
\caption{The interval $\mathcal{I}_{\beta}$ in the case where $m=3$ and $\beta\in[\frac{5}{2},1+\sqrt{3})$.}
    \label{fig4}
\end{figure}
\begin{prop}
Let $\beta\in[\frac{2k+3}{2},\frac{k+1+\sqrt{k^{2}+6k+5}}{2}).$ There exists $n(\beta)\in \mathbb{N}$ such that, for each $x\in \mathcal{I}_{\beta}$ there exists two elements $a,b\in \Omega_{\beta,m,n(\beta)}(x)$ such that $a(x)\in \mathcal{I}_{\beta}$ and $b(x)\in \mathcal{I}_{\beta}.$
\end{prop}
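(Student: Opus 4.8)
The plan is to follow the strategy of Proposition \ref{Generate prefix's prop}, adding the machinery needed to cope with the fixed digit intervals and with the central choice interval $[\frac{k+1}{\beta},\frac{k\beta+m-k}{\beta(\beta-1)}]$, both of which, for $\beta\in[\frac{2k+3}{2},\frac{k+1+\sqrt{k^{2}+6k+5}}{2})$, act as traps from which a single map has to be iterated to escape. As in the even case we may assume $\epsilon_{0}(\beta)$ is small enough that $\mathcal{I}_{\beta}$ contains the switch region, and (using Proposition \ref{Odd prop 2} and its proof, which places $\frac{k\beta+k+1}{\beta^{2}-1}$ and $\frac{(k+1)\beta+k}{\beta^{2}-1}$ in the interior of the central choice interval when $\beta<\mathcal{G}(m)$) that $\mathcal{I}_{\beta}$ contains the sub-interval $[\frac{k\beta+k+1}{\beta^{2}-1},\frac{(k+1)\beta+k}{\beta^{2}-1}]$ in its interior.

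Fix $x\in\mathcal{I}_{\beta}$. Since $\mathcal{G}(m)\le\frac{m+\sqrt{m^{2}+4}}{2}$, Lemma \ref{switch lemma} and Lemma \ref{smaller switch 2} give a finite sequence of maps $a$ with $a(x)\in[\frac{1}{\beta}+\epsilon_{0}(\beta),\frac{(m-1)\beta+1}{\beta(\beta-1)}-\epsilon_{0}(\beta)]$, and, exactly as in the even case, the length of $a$ is bounded by a constant $n_{s}(\beta)$ independent of $x$, because the endpoints of $\mathcal{I}_{\beta}$ lie a bounded distance from the fixed points of $T_{\beta,0}$ and $T_{\beta,m}$, which expand distances to their fixed points by a factor $\beta$ (Lemma \ref{Map properties}). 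If $a(x)$ lies in the interior of a non-central choice interval, or in $[\frac{k\beta+k+1}{\beta^{2}-1},\frac{(k+1)\beta+k}{\beta^{2}-1}]$, then a case analysis as in Proposition \ref{Generate prefix's prop} — using the margins $\epsilon_{i}(\beta)$, $\epsilon_{i}^{*}(\beta)$ built into $L(\beta)$ and $R(\beta)$ together with Lemma \ref{centre switch} — produces two distinct maps sending $a(x)$ into $\mathcal{I}_{\beta}$, and we finish with at most $n_{s}(\beta)+1$ maps.

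It remains to treat the case where $a(x)$ is caught in a trap. If $a(x)$ lies in the $i$-th fixed digit interval with $i\in\{1,\dots,k\}$, only $T_{\beta,i}$ keeps it in $I_{\beta,m}$; by Lemma \ref{Fixed point lemma} the fixed point $\frac{i}{\beta-1}$ lies to the left of this interval, so by Corollary \ref{Increasing cor} iterating $T_{\beta,i}$ moves the orbit strictly rightward, multiplying the distance to $\frac{i}{\beta-1}$ by $\beta$ at each step, while Lemma \ref{jump centre} keeps every iterate below $\frac{k\beta+m-k}{\beta(\beta-1)}$. Hence after a uniformly bounded number of steps the orbit leaves the $i$-th fixed digit interval into $(\frac{i+1}{\beta},\frac{k\beta+m-k}{\beta(\beta-1)})$, i.e.\ into a choice interval or a fixed digit interval of strictly larger index; as the index can only increase and is at most $k$, after at most $k$ such phases — a uniformly bounded number of maps — the orbit reaches a choice interval, and the symmetric argument handles $i\in\{k+1,\dots,m-1\}$. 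If the choice interval reached is the central one, a further bounded iteration of $T_{\beta,k}$ (resp.\ $T_{\beta,k+1}$), expanding the distance to its fixed point and using the identities $T_{\beta,k}(\frac{k\beta+k+1}{\beta^{2}-1})=\frac{(k+1)\beta+k}{\beta^{2}-1}$ and $T_{\beta,k+1}(\frac{(k+1)\beta+k}{\beta^{2}-1})=\frac{k\beta+k+1}{\beta^{2}-1}$ from Proposition \ref{Odd prop 2}, drives the orbit into $[\frac{k\beta+k+1}{\beta^{2}-1},\frac{(k+1)\beta+k}{\beta^{2}-1}]$. In every case, after a uniformly bounded number $n_{c}(\beta)$ of additional maps we reach a position from which two distinct maps return the point to $\mathcal{I}_{\beta}$. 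Taking $n(\beta)=n_{s}(\beta)+n_{c}(\beta)+1$ and padding any shorter pair of prefixes by a common arbitrary block of maps keeping the image in $\mathcal{I}_{\beta}$, exactly as in Proposition \ref{Generate prefix's prop}, yields $a,b\in\Omega_{\beta,m,n(\beta)}(x)$ with $a(x),b(x)\in\mathcal{I}_{\beta}$.

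The main obstacle is the bookkeeping of the third paragraph: verifying that the number of single-map iterations needed to escape each fixed digit interval and to reach the inner part of the central choice interval is bounded uniformly in $x$, and that upon escaping the orbit lands in a region where \emph{two} maps, not one, return to $\mathcal{I}_{\beta}$. Both facts are forced by Lemma \ref{Map properties} (strict expansion by $\beta$ of distances to fixed points, with those distances bounded below on the relevant intervals) together with the precise choice of the margins $\epsilon_{i}(\beta),\epsilon_{i}^{*}(\beta)$ entering $L(\beta)$ and $R(\beta)$.
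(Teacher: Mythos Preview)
Your sketch is correct and follows essentially the same route as the paper: map into the trimmed switch region in boundedly many steps, then run a case analysis in which the (margin-trimmed) non-central choice intervals and the inner block $[\frac{k\beta+k+1}{\beta^{2}-1},\frac{(k+1)\beta+k}{\beta^{2}-1}]$ admit two maps back into $\mathcal{I}_{\beta}$, while the enlarged fixed-digit intervals and the outer part of the central choice interval are escaped in uniformly boundedly many single-map steps via the expansion in Lemma~\ref{Map properties} together with Corollary~\ref{Increasing cor} and Lemmas~\ref{jump centre}/\ref{centre switch}. The only point to make explicit when you write it out in full is that the edge-slivers you trim from each choice interval (of width $\epsilon_{i}(\beta)$ or $\epsilon_{i}^{*}(\beta)$) must be absorbed into the adjacent enlarged fixed-digit intervals so that the cases genuinely cover $[\frac{1}{\beta}+\epsilon_{0}(\beta),\frac{(m-1)\beta+1}{\beta(\beta-1)}-\epsilon_{0}(\beta)]$; the paper does this by writing the union out explicitly, and your final $n(\beta)$ then matches the paper's $n_{s}(\beta)+n_{c}(\beta)+\sum_{i=1}^{k-1}n_{i}(\beta)$.
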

\begin{proof}
Without loss of generality we may assume that $\epsilon_{0}(\beta)$ is sufficiently small such that $\mathcal{I}_{\beta}$ contains the switch region. By Lemma \ref{switch lemma} there exists a sequence of maps $a$ that map $x$ into the switch region. As the endpoints of $\mathcal{I}_{\beta}$ are bounded away from the endpoints of $I_{\beta,m}$ we can bound the length of $a$ above by some $n_{s}(\beta)\in\mathbb{N}$. Moreover, by Lemma \ref{smaller switch 2} we may assume that $a(x)\in[\frac{1}{\beta}+\epsilon_{0}(\beta),\frac{(m-1)\beta+1}{\beta(\beta-1)}-\epsilon_{0}(\beta)].$ As in the even case it is useful to treat $[\frac{1}{\beta}+\epsilon_{0}(\beta),\frac{(m-1)\beta+1}{\beta(\beta-1)}-\epsilon_{0}(\beta)]$ as the union of subintervals. We observe that

\begin{align*}
\Big[\frac{1}{\beta}+\epsilon_{0}(\beta),\frac{(m-1)\beta+1}{\beta(\beta-1)}-\epsilon_{0}(\beta)\Big]=&\Big[\frac{1}{\beta}+\epsilon_{0}(\beta),\frac{m}{\beta(\beta-1)}-\epsilon_{1}(\beta)\Big]\\
\bigcup &\Big[\frac{m}{\beta}+\epsilon_{m-1}(\beta),\frac{(m-1)\beta+1}{\beta(\beta-1)}-\epsilon_{0}(\beta)\Big]\\
\bigcup &\Big[\frac{(k-1)\beta+m-(k-1)}{\beta(\beta-1)}-\epsilon_{k}(\beta),\frac{k+2}{\beta}+\epsilon_{k+1}(\beta)\Big]\\
\bigcup_{i=2}^{k}&\Big[\frac{i}{\beta}+\epsilon_{i-1}^{*}(\beta),\frac{(i-1)\beta+m-(i-1)}{\beta(\beta-1)}-\epsilon_{i}(\beta)\Big]\\
\bigcup_{i=k+2}^{m-1}&\Big[\frac{i}{\beta}+\epsilon_{i-1}(\beta),\frac{(i-1)\beta+m-(i-1)}{\beta(\beta-1)}-\epsilon_{i}^{*}(\beta)\Big]\\
\textrm{ }\bigcup_{i=1}^{k-1}&\Big[\frac{(i-1)\beta+m-(i-1)}{\beta(\beta-1)}-\epsilon_{i}(\beta),\frac{i+1}{\beta}+\epsilon_{i}^{*}(\beta)\Big]\\
\bigcup_{i=k+2}^{m-1}&\Big[\frac{(i-1)\beta+m-(i-1)}{\beta(\beta-1)}-\epsilon_{i}^{*}(\beta),\frac{i+1}{\beta}+\epsilon_{i}(\beta)\Big].
\end{align*}
Without loss of generality we may assume that $\epsilon_{0}(\beta),\epsilon_{i}(\beta),\epsilon_{i}^{*}(\beta)$ are all sufficiently small such that each of the above intervals in our union are well defined and nontrivial. We now proceed via a case analysis.

\begin{itemize}
	\item If $a(x)\in[\frac{1}{\beta}+\epsilon_{0}(\beta),\frac{m}{\beta(\beta-1)}-\epsilon_{1}(\beta)]$ then $T_{\beta,0}(a(x))\in\mathcal{I}_{\beta}$ and $T_{\beta,1}(a(x))\in\mathcal{I}_{\beta}.$ 
	\item If $a(x)\in[\frac{m}{\beta}+\epsilon_{m-1}(\beta),\frac{(m-1)\beta+1}{\beta(\beta-1)}-\epsilon_{0}(\beta)]$ then $T_{\beta,m-1}(a(x))\in\mathcal{I}_{\beta}$ and $T_{\beta,m}(a(x))\in\mathcal{I}_{\beta}.$
	\item Suppose $a(x)\in[\frac{(k-1)\beta+m-(k-1)}{\beta(\beta-1)}-\epsilon_{k}(\beta),\frac{k+2}{\beta}+\epsilon_{k+1}(\beta)].$ If $a(x)\in[\frac{k\beta+k+1}{\beta^{2}-1},\frac{(k+1)\beta+k}{\beta^{2}-1}]$ then $T_{\beta,k}(a(x))\in\mathcal{I}_{\beta}$ and $T_{\beta,k+1}(a(x))\in\mathcal{I}_{\beta}.$ If $a(x)\in[\frac{(k-1)\beta+m-(k-1)}{\beta(\beta-1)}-\epsilon_{k}(\beta),\frac{k\beta+k+1}{\beta^{2}-1}]$ then we are a bounded distance away from the fixed point of the map $T_{\beta,k},$ by Lemma \ref{Map properties} we know that $T_{\beta,k}$ scales the distance between $a(x)$ and the fixed point of $T_{\beta,k}$ by a factor $\beta,$ therefore we can bound the number of maps required to map $a(x)$ into $[\frac{k\beta+k+1}{\beta^{2}-1},\frac{(k+1)\beta+k}{\beta^{2}-1}]$. By a similar argument, if $a(x)\in[\frac{(k+1)\beta+k}{\beta^{2}-1},\frac{k+2}{\beta}+\epsilon_{k+1}(\beta)]$ we can bound the number of maps required to map $a(x)$ into $[\frac{k\beta+k+1}{\beta^{2}-1},\frac{(k+1)\beta+k}{\beta^{2}-1}]$. By the above we can assert that when $a(x)\in[\frac{(k-1)\beta+m-(k-1)}{\beta(\beta-1)}-\epsilon_{k}(\beta),\frac{k+2}{\beta}+\epsilon_{k+1}(\beta)]$ there exists two distinct sequences of maps whose length we can bound above by some $n_{c}(\beta)\in\mathbb{N}$ that map $a(x)$ into $\mathcal{I}_{\beta}.$
	\item If $a(x)\in[\frac{i}{\beta}+\epsilon_{i-1}^{*}(\beta),\frac{(i-1)\beta+m-(i-1)}{\beta(\beta-1)}-\epsilon_{i}(\beta)]$ for some $i\in\{2,\ldots,k-1\}$ then $T_{\beta,i-1}(a(x))\in\mathcal{I}_{\beta}$ and $T_{\beta,i}(a(x))\in\mathcal{I}_{\beta}.$
	\item If $a(x)\in[\frac{i}{\beta}+\epsilon_{i}(\beta),\frac{(i-1)\beta+m-(i-1)}{\beta(\beta-1)}-\epsilon_{i}^{*}(\beta)]$ for some $i\in\{k+2,\ldots,m-1\}$ then $T_{\beta,i-1}(a(x))\in\mathcal{I}_{\beta}$ and $T_{\beta,i}(a(x))\in\mathcal{I}_{\beta}.$
	\item If $a(x)\in[\frac{(i-1)\beta+m-(i-1)}{\beta(\beta-1)}-\epsilon_{i}(\beta),\frac{i+1}{\beta}+\epsilon_{i}^{*}(\beta)]$ for some $i\in\{1,\ldots,k-1\}$ then $a(x)$ is a bounded distance away from the fixed point of the map $T_{\beta,i},$ by Lemma \ref{Map properties} we know that $T_{\beta,i}$ scales the distance between $a(x)$ and its fixed point by a factor $\beta,$ therefore we can bound the number of maps required to map $a(x)$ outside of the interval $[\frac{(i-1)\beta+m-(i-1)}{\beta(\beta-1)}-\epsilon_{i}(\beta),\frac{i+1}{\beta}+\epsilon_{i}^{*}(\beta)]$ by some $n_{i}(\beta)\in\mathbb{N}.$ If $a(x)$ has been mapped into an interval covered by one of the above cases we are done, if not it has to be mapped into another interval of the form $[\frac{(j-1)\beta+m-(j-1)}{\beta(\beta-1)}-\epsilon_{j}(\beta),\frac{j+1}{\beta}+\epsilon_{j}^{*}(\beta)].$ By Corollary \ref{Increasing cor} and Lemma \ref{centre switch} we know that $i<j\leq k+1$. Repeating the previous step as many times as is necessary we can ensure that within $\sum_{i=1}^{k-1}n_{i}(\beta)$ maps, $a(x)$ has to be mapped into an interval that was addressed in one of our previous cases.
	\item The case where $a(x)\in[\frac{(i-1)\beta+m-(i-1)}{\beta(\beta-1)}-\epsilon_{i}^{*}(\beta),\frac{i+1}{\beta}+\epsilon_{i}(\beta)]$ for some $i\in\{k+2\ldots,m-1\}$ is analogous to the case where $a(x)\in[\frac{(i-1)\beta+m-(i-1)}{\beta(\beta-1)}-\epsilon_{i}(\beta),\frac{i+1}{\beta}+\epsilon_{i}^{*}(\beta)]$ for some $i\in\{1,\ldots,k-1\}.$
\end{itemize}

We've shown that for any $x\in \mathcal{I}_{\beta}$ there exists $n(x)\in\mathbb{N}$ such that, two distinct elements of $\Omega_{\beta,m,n(x)}(x)$ map $x$ into $\mathcal{I}_{\beta},$ moreover $n(x)\leq n_{s}(\beta)+n_{c}(\beta)+\sum_{i=1}^{k-1}n_{i}(\beta)$ . We take $n(\beta)$ to equal $n_{s}(\beta)+n_{c}(\beta)+\sum_{i=1}^{k-1}n_{i}(\beta)$. If $n(x)<n(\beta)$ then as in the even case we concatenate our image of $x$ by an arbitrary sequence of maps of length $n(\beta)-n(x)$ that map $x$ into $\mathcal{I}_{\beta},$ this ensures our sequences of maps are of length $n(\beta).$  

\end{proof}

Repeating the analysis given in the case where $m$ is even we can conclude Theorem \ref{Dimension theorem} in the case where $m$ is odd.

\section{Open questions and a table of values for $\mathcal{G}(m),$ $\beta_{f}(m)$ and $\beta_{c}(m)$}
We conclude with a few open questions and a table of values for $\mathcal{G}(m),$ $\beta_{f}(m)$ and $\beta_{c}(m)$. 

\begin{itemize}
	\item In \cite{AllClaSid} the authors study the order in which periodic orbits appear in the set of uniqueness. When $m=1$ they show that as $\beta\nearrow 2$ the order in which periodic orbits appear in the set of uniqueness is intimately related to the classical Sharkovskii ordering. It is natural to ask whether a similar result holds in our general case.
	\item In \cite{SidVer} it is shown that when $m=1$ and $\beta=\frac{1+\sqrt{5}}{2}$ the set of numbers: $x=\frac{(1+\sqrt{5})n}{2}(\textrm{mod } 1)$ for some $n\in\mathbb{N}$ have countably many $\beta$-expansions, while the other elements of $(0,\frac{1}{\beta-1})$ have uncountably many $\beta$-expansions. Does an analogue of this statement hold in the case of general $m$?
	\item Let $p_{1},\ldots,p_{k}$ be points in $\mathbb{R}^{d}$ such that the polyhedra $\Pi$ with these vertices is convex. Let $\{f_{i}\}_{i=1}^{k}$ be the one parameter family of maps given by $$f_{i}(x)=\lambda x +(1-\lambda)p_{i},$$ where $\lambda\in(0,1)$ is our parameter. As is well know there exists a unique $S_{\lambda}$ such that $S_{\lambda}=\cup_{i=1}^{k}f_{i}(S_{\lambda})$. We say that $(\epsilon_{i})_{i=1}^{\infty}\in\{1,\ldots,k\}^{\mathbb{N}}$ is an address for $x\in S_{\lambda}$ if $\lim_{n\to\infty} (f_{\epsilon_{n}}\circ\ldots \circ f_{\epsilon_{1}})(\textbf{0})=x.$ We ask whether an analogue of the golden ratio exists in this case, i.e, does there exists $\lambda^{*}$ such that for $\lambda\in(\lambda^{*},1)$ every $x\in S_{\lambda}\setminus\{p_{1},\ldots,p_{k}\}$ has uncountably many addresses, but for $\lambda\in(0,\lambda^{*})$ there exists $x\in S_{\lambda}\setminus\{p_{1},\ldots,p_{k}\}$ with a unique address. In \cite{SidOverlaps} the author shows that an analogue of the golden ratio exists in the case when $d=2$ and $k=3.$
\end{itemize}
\begin{table}[ht]
\caption{Table of values for $\mathcal{G}(m),$ $\beta_{f}(m)$ and $\beta_{c}(m)$} 
\centering  
\begin{tabular}{c c c c} 
\hline\hline                        
$m$ & $\mathcal{G}(m)$& $\beta_{f}(m)$ &$\beta_{c}(m)$\\ [0.5ex] 
\hline                  
1 & $\frac{1+\sqrt{5}}{2}\approx 1.61803\ldots$& $1.75488\ldots$ & $1.78723\ldots$\\ 
2 & 2 & $1+\sqrt{2}=2.41421\ldots$ & $2.47098\ldots$\\
3 & $1+\sqrt{3}\approx 2.73205\ldots$ & 2.89329\ldots &  2.90330\ldots\\
4 & $3$& $\frac{3+\sqrt{17}}{2}=3.56155\ldots$  & $3.66607\ldots$\\
5 & $\frac{3+\sqrt{21}}{2}\approx 3.79129\ldots$& $3.93947$ & $3.94583\ldots$\\
6 & $4$ & $2+\frac{\sqrt{28}}{2}=4.64575\ldots$ &  $4.75180\ldots$ \\ 
7 & $2+2\sqrt{2}\approx 4.82843\ldots $& 4.96095\ldots & $4.96496\ldots$\\
8 & $5$ &  $\frac{5+\sqrt{41}}{2}=5.70156\ldots$ & $5.80171\ldots$\\
9 & $\frac{5+\sqrt{45}}{2}\approx 5.85410\ldots $& $5.97273\ldots$ & $5.97537\ldots$\\
10& $6 $& $3+\sqrt{14}=6.74166\ldots$ & $6.83469\ldots$\\[1ex]      

\hline 
\end{tabular}
\label{table 1} 
\end{table}

\noindent \textbf{Acknowledgements} The author would like to thank Nikita Sidorov for much support and Rafael Alcaraz Barrera for his useful remarks. 

\newpage

\end{document}